\newtheorem{theo}{Th\'eor\`eme}[subsection]
\newtheorem{lem}[theo]{Lemme}
\newtheorem{prop}[theo]{Proposition}
\theoremstyle{definition}
\newtheorem{defi}[theo]{D\'efinition}
\newtheorem{exe}[theo]{Exemple}
\theoremstyle{remark}
\newcommand{\E}{\mathcal{E}}
\newcommand{\I}{\mathcal{I}}
\newcommand{\J}{\mathcal{J}}
\newcommand{\g}{\gamma}
\newcommand{\al}{\alpha}
\newcommand{\de}{\delta}
\newcommand{\be}{\beta}
\newcommand{\D}{\mathfrak D}
\newcommand{\Le}{\mathfrak L}
\newcommand{\In}{\mathfrak I}
\newcommand{\ann}{\textrm{ ann}}
\newcommand{\Hom}{\textrm{Hom}}
\def\thebibliography#1{\section*{{\large Bibliographie}\markboth
 {REFERENCES}{REFERENCES}}\list
 {[\arabic{enumi}]}{\settowidth\labelwidth{[#1]}\leftmargin\labelwidth
 \advance\leftmargin\labelsep
 \usecounter{enumi}}
 \def\newblock{\hskip .11em plus .33em minus -.07em}
 \sloppy
 \sfcode`\.=1000\relax}
\def\qed{\relax\ifmmode\hskip2em \Box\else\unskip\nobreak\hskip1em \hfill$\Box$\fi}
\title{D\'{e}rivations dans les alg\`{e}bres d'\'{e}volution \`{a} puissances associatives}
\author{
Moussa OUATTARA\thanks{\texttt{ouatt\_ken@yahoo.fr}}\quad\quad
Souleymane SAVADOGO\thanks{\texttt{sara01souley@yahoo.fr}}\\
D\'epartement de Math\'ematiques et Informatique\\
Universit\'e Ouaga I Pr Joseph KI-ZERBO\\
03 BP 7021 Ouagadougou 03\\
Burkina Faso
}
\date{}
\begin{document}
\maketitle

\begin{abstract} In this paper, we investigate the derivations in evolution algebras that are power-associative. This problem is reduced to that of power-associative evolution nilalgebras. We show how to calculate derivations in decomposable algebras. This caculation shows that it is enough to describe derivations in indecomposable evolution algebras. We first determine the derivation algebra of $n$-dimensional indecomposable associative evolution nilalgebras with one-dimensional annihilator.
We describe the derivation algebra of indecomposable nilalgebras, up to dimension $6$, that are associative or not.
In each cases, we give the commutator of two derivations.  We also describe the ideal of inner derivation.

\medskip 2010 Mathematics Subject Classification~: Primary 17D92, 17A05, Secondary 17D99, 17A60

\textbf{Keywords : } Evolution algebras, Derivations, Inner derivations, Power associativity, Nil\-algebras.
\end{abstract}
\bigskip\bigskip
%\vspace*{0.5cm}

\section{Introduction}
Il existe plusieurs classes d'alg\`{e}bres non-associatives : les
alg\`{e}bres train, les alg\`{e}bres de Jordan, les alg\`ebres alternatives,
les alg\`{e}bres de Bernstein \ldots

Dans \cite{Tian2006}, les Auteurs
d\'{e}finissent une nouvelle classe d'alg\`{e}bres non-associatives, celle
des alg\`{e}bres d'\'{e}volution. Cette classe d'alg\`{e}bre ne forme pas une
variet\'{e}, i.e. elle n'est pas d\'{e}finie par des identit\'{e}s ; par
cons\'{e}quent, l'\'{e}tude de ces alg\`{e}bres suit des voies différentes
\cite{Casado,Mercedes2016,camacho,Casas2014,Elduque2015,Elduque2016,Hegazi2015,Labra,Tian2006}.

Dans \cite{Tian2008}, l'Auteur montre qu'une alg\`{e}bre d'\'{e}volution est commutative (donc
flexible), qu'elle n'est pas n\'{e}cessairement associative, ni \`{a}
puissances associatives. Toutefois, dans \cite{Ouatt2018}, les Auteurs
caract\'erise les alg\`{e}bres d'\'{e}volution qui sont associatives, de même
que celles qui sont \`{a} puissances associatives. Ils montrent notamment que
toute alg\`{e}bre d'\'{e}volution qui est \`{a} puissances associatives est
une alg\`ebre de Jordan. La d\'ecomposition de Wedderburn des alg\`ebres
d'\'evolution qui son \`a puissances associatives est construite.

\textit{Une d\'{e}rivation} dans une alg\`{e}bre $\E$ sur un corps $K$ est une
application lin\'{e}aire $d~: \E \longrightarrow \E$ v\'{e}rifiant $d(uv) =
ud(v) + d(u)v$ pour tous $u, v \in \E$. Il est bien connu que l'espace
$\D(\E)$ de toutes les d\'{e}rivations de $\E$ est une alg\`{e}bre de
Lie, appel\'ee  \textit{alg\`ebre de Lie des d\'erivations}, o\`u le crochet
de deux d\'erivations $d$ et $d'$ est d\'{e}fini par $[d, d'] = d\circ d' - d'\circ d$. On note $\D(\E)'$ son alg\`ebre d\'eriv\'ee
\cite{N. Jacobson}.

Dans la th\'{e}orie des alg\`{e}bres non-associatives,
l'alg\`{e}bre de Lie des d\'{e}rivations d'une alg\`{e}bre $\E$ est un outil
tr\`{e}s important pour \'{e}tudier la structure de l'alg\`{e}bre.

Dans \cite[Theorem~4.1]{Farrukh2014}, les Auteurs trouvent la conditions sous
laquelle une alg\`{e}bre g\'{e}n\'{e}tique de dimension $2$ s'identifie \`{a}
une alg\`{e}bre d'\'{e}volution puis, ils prouvent l'existence d'une
d\'{e}rivation non-triviale dans les alg\`{e}bres g\'{e}n\'etiques en
dimension $2$ \cite[Corrolary~5.2]{Farrukh2014}.

Dans \cite{Qaralleh2017}, les Auteurs d\'{e}crivent les d\'{e}rivations dans les alg\`{e}bres
d'\'{e}volution nilpotentes et resolubles en dimension $3$. Ils d\'{e}crivent
aussi les d\'{e}rivations dans les sous-alg\`{e}bres d'\'{e}volution
complexes de dimension $2$.

Dans \cite{Camacho2013}, les Auteurs \'{e}tudient les d\'{e}rivations des alg\`{e}bres d'\'{e}volution de
dimension $n$, lorsque le rang de la matrice des constantes de structures est
$n - 1$ ou $n$.

Dans la section $2$, nous montrons qu'il est suffisant de
caract\`{e}riser les d\'{e}rivations dans les nil-alg\`{e}bres
d'\'{e}volution \`{a} puissances associatives puis nous caract\'{e}risons les
d\'{e}rivations dans les nil-alg\`{e}bres d'\'{e}volution d\'{e}composable
\`{a} puissances associatives.

Dans la section $3$, nous caract\'erisons les d\'erivations dans les nil-alg\`ebres d'\'evolution d\'ecomposables \`a puissances associatives.

Dans la section $4$, nous  \'{e}tudions les d\'{e}rivations dans les nil-alg\`{e}bres d'\'{e}volution ind\'{e}composable
et associatives.

Dans la section $5$, nous d\'{e}crivons les d\'{e}rivations
dans les nil-alg\`{e}bres d'\'{e}volution ind\'{e}composable \`{a} puissances
associatives et qui ne sont pas associatives.

%\medskip{$\mathfrak{azertyuiopqsdfghjklmwxcvbn}$}

%\medskip{$\mathfrak{AZERTYUIOPQSDFGHJKLMWXCVBN}$}

%\medskip $\mathcal{AZERTYUIOPQSDFGHJKLMWXCVBN}$

\section{Pr\'{e}liminaires}

Soient $K$ un corps commutatif et $\E$ une $K$-alg\`{e}bre commutative. On d\'{e}finit par $R_{a} : \E \longrightarrow \E$,
$x \longmapsto xa$ \textit{la multiplication \`{a} droite} par l'\'{e}l\'{e}ment $a$ de $\E$. On note $R(\E)$ l'ensemble des multiplications à droite de $\E$. Soit $\Le(\E)$ l'alg\`ebre de Lie des transformations de $\E$.

\begin{defi} Une d\'erivation $d$ de $\E$ est dite \textit{int\'erieure} si $d\in \Le(\E)$. On note $\In(\E)=\D(\E)\cap\Le(\E)$, l'ensemble des d\'erivations int\'erieures de $\E$. C'est un id\'eal de l'alg\`ebre des d\'erivations $\D(\E)$.
\end{defi}
Dans les cas o\`u l'alg\`ebre $\E$ est associative (resp. de Jordan) $\Le(\E)=R(\E)$ (resp. $\Le(\E)=R(\E)+[R(\E),R(\E)]$) \cite{schafer}.

\textit{Les puissances principales} d'un élément
$a \in \E$ sont définies par $a^1=a$ et $a^{k+1}=a^ka$ tandis que celles de $\E$ sont d\'{e}finies par $\E^{1} = \E,\quad \E^{k + 1} = \E^{k}\E$ ($k\geq 1$).

\begin{defi}
On dira que l'algèbre $\E$ est :

$i)$ \textit{nilpotente}  s'il existe un entier non nul $n$ tel que $\E^{n} = 0$, un tel plus petit entier est appel\'{e} \textit{l'indice de nilpotence} ;

$ii)$ \textit{nil}, s'il existe un entier non nul $n(a)$ tel que $a^{n(a)} = 0,$ pour tout $a \in \E$, un tel plus petit entier est appel\'{e} \textit{le nilindice} de $\E$.
\end{defi}

\begin{defi} L'algèbre $\E$ est dite :

$i)$ \textit{associative} si pour tous $x,$ $y,$ $z \in \E,$ $(x, y, z) = 0$ o\`{u} $(x, y, z) = (xy)z - x(yz)$ d\'{e}signe l'associateur des \'{e}l\'{e}ments $x,$ $y,$ $z$ de $\E$ ;

$ii)$ \textit{de Jordan} si elle vérifie $(x^{2}, y, x) = 0$, pour tous $x, y \in \E$ ;

$iii)$ \textit{\`{a} puissances associatives} si pour tout $x \in \E$, la sous alg\`{e}bre engendr\'{e}e par $x$ est associative. Autrement dit, pour tout $x \in \E$, $x^{i}x^{j} = x^{i + j}$ pour tous entiers $i, j\geq 1.$
\end{defi}

\begin{defi}
L'algèbre $\E$ est dite \textit{\`{a} quatri\`{e}me puissances associatives} si $x^{2}x^{2} = x^{4}$ pour tout $x \in \E$.
\end{defi}

\begin{theo}[\cite{Albert1}] En caract\'{e}ristique $\neq 2, 3, 5$, l'algèbre $\E$ est \`{a} puissances associatives si et seulement si $x^{2}x^{2} = x^{4}$, pour tout $x \in A.$
\end{theo}

\begin{defi}\label{indecom} L'algèbre $\E$ est \textit{décomposable} s'il existe des idéaux non nuls $\I$ et $\J$ tels que $\E=\I\oplus\J$. Dans le cas contraire elle est \textit{indécomposable}.
\end{defi}

\begin{defi}
Si l'algèbre $\E$ admet une base $B = \{e_{1}, e_{2}, \ldots, e_{n}\}$ telle que
\begin{eqnarray}\label{Eq1}
e_{i}e_{j} = 0 \textrm{ pour tous $1 \leq i \neq j \leq n$ et }e_{i}^{2} = \sum_{k=1}^{n} a_{ik}e_{k}\textrm{ pour tout $1 \leq i \leq n$},
\end{eqnarray}
alors on dit que $\E$ est une \textit{$K$-alg\`{e}bre d'\'{e}volution} et $B$ est \textit{une base naturelle} de $\E$. La matrice $M = (a_{ik})_{1 \leq i, k \leq n}$
est \textit{la matrice des constantes de structures} de $\E$ relativement \`{a} la base naturelle~$B.$
\end{defi}

\begin{defi} Une alg\`{e}bre $\E$ de dimension finie $n + 1$ est dite \textit{dimensionnellement nilpotente (ADN)} s'il existe une d\'{e}rivation $d$ de $\E$ telle que $d^{n} = 0$.
\end{defi}

Dans ce cas, il existe une base $\{e_{0}, e_{1}, \ldots, e_{n}\}$ de $\E$ telle que $d(e_{i}) = e_{i + 1}$, $d(e_{n}) = 0$ avec $i = 0, \ldots, n - 1$. Une telle base est
dite \textit{adapt\'{e}e}.

\begin{theo} La seule alg\`{e}bre d'\'{e}volution ind\'{e}composable qui est ADN est $N_{2, 2} : e_{0}^{2} = e_{1}, e_{1}^{2} =~0$.
\end{theo}

\begin{proof} On suppose que $\E$ est une ADN d'\'{e}volution de dimension finie $n + 1$ dans la base adapt\'{e}e $\{e_{0}, e_{1}, \ldots, e_{n}\}$.
%On a $0 = e_{i}e_{n}$ entraîne $0 = d(e_{i})e_{n} + e_{i}d(e_{n}) = d(e_{i})e_{n} = e_{i + 1}e_{n}$ ($i = 0, \ldots, n - 1$) ; donc $e_{i}e_{n} = 0$, $\forall i$.
En d\'{e}rivant $e_{i}^{2}$, on obtient $d(e_{i}^{2}) = 2e_{i}e_{i + 1} = 0$, donc $e_{i}^{2} \in\ker(d)=Ke_n$. Il existe alors des scalaires
$\al_{in}$, tels que $e_{i}^{2} = \alpha_{in}e_{n}$, pour $i = 0, \ldots, n - 1$, et $e_{n}^{2} = 0$. On en d\'{e}duit que $\E$ est associative
(\cite[\S~4.1]{Ouatt2018}). En d\'{e}rivant $0 = e_{i}e_{j}$ pour $0 \leq i \neq j \leq n - 1$, on
obtient $0 = d(e_{i})e_{j} + e_{i}d(e_{j}) = e_{i + 1}e_{j} + e_{i}e_{j + 1}$ et en prenant $i + 1 = j$, l'\'{e}galit\'{e} devient
 $0 = e_{j}^{2} + e_{j - 1}e_{j + 1} = e_{j}^{2}$ pour $j = 1, \ldots, n - 1$. On d\'{e}duit de tout ce qui pr\'{e}c\`{e}de que
 $e_{0}^{2} = \alpha_{0n}e_{n}$, $e_{i}^{2} = 0$ pour $i = 1, \ldots, n.$ Donc $\E = N_{n + 1, 1}$ ou $\E = N_{n - 1, 1}\oplus N_{2, 2}$ selon que
 $\al_{0n}=0$, o\`{u} $N_{j, 1}$ est une z\'{e}ro alg\`{e}bre de dimension $j$ et $N_{2, 2} : e_{0}^{2} = e_{1}$, $e_{1}^{2} = 0$ est
 une alg\`{e}bre d'\'{e}volution, d'o\`{u} le th\'{e}or\`{e}me.
\end{proof}

Soient $\E$ une alg\`{e}bre d'\'{e}volution  dont la table de multiplication, dans une base naturelle $B = \{e_{1}, e_{2}, \ldots, e_{n}\}$, est donn\'{e}e par \eqref{Eq1}.

On d\'{e}finit  \textit{l'annulateur} de $\E$ par $\ann(\E) = \{x \in \E : x\E = 0\}$ et dans \cite[Lemme 2.7]{Elduque2015}, les Auteurs montrent
que $\ann(\E) = span\{e_{i} \in B \mid e_{i}^{2} = 0\}.$

Soit $d$ une d\'{e}rivation dans $\E$ ; on pose $d(e_{i}) = \sum_{j = 1}^{n}d_{ij}e_{j}$ pour tout $1 \leq i \leq n$. Dans \cite{Tian2008}, l'Auteur montre que
\[
%\begin{eqnarray*}
\D(\E) = \{d \in End(\E) ; a_{jk}d_{ji} + a_{ik}d_{ij} = 0 \textrm{ pour } i \neq j \textrm{ et pour tout } k, 2d_{ii}a_{ik} = \sum_{j = 1}^{n}a_{ij}d_{kj}\}.
%\end{eqnarray*}
\]

On suppose que $\E$ est une nil-alg\`{e}bre d'\'{e}volution telle que $\dim(ann(\E)) = n - s$ o\`{u} $s$ est un entier non nul. Sans perdre la g\'{e}n\'{e}ralit\'{e}, on peut poser $ann(\E) = <e_{s + 1}, \ldots, e_{n}>$.
Pour $1 \leq i \neq j \leq s$, en d\'{e}rivant $0 = e_{i}e_{j}$, on obtient $0 = d(e_{i}e_{j}) = e_{i}d(e_{j}) + d(e_{i})e_{j} = d_{ij}e_{i}^{2} + d_{ji}e_{j}^{2}$. On distingue alors deux cas :
\begin{itemize}
  \item[--] Premier cas : Les vecteurs $e_{i}^{2}$ et $e_{j}^{2}$ sont lin\'eairement indépendants, alors $d_{ij} = d_{ji} = 0$. \hfill $(a)$
  \item[--] Deuxi\`{e}me cas : $e_{i}^{2} = \beta_{ij}e_{j}^{2}$ avec $\beta_{ij} \in K^{*}$ (car $e_{i}^{2} \neq 0$ et $e_{j}^{2} \neq 0$), alors $d_{ji} = -\beta_{ij}d_{ij}$.\hfill $(b)$
\end{itemize}
Dans la suite, on suppose que le corps $K$ est de caract\'{e}ristique $\neq 2$ et $\E$ est une alg\`ebre d'\'evolution, \`{a} puissances associatives.

\begin{theo}[\cite{Ouatt2018}, Theorem 3.5.4]\label{Wedder}
Si $\E$ est non nil, alors $\E$ admet $s$ idempotents deux \`{a} deux orthogonaux $u_{1}, u_{2}, \ldots, u_{s}$, non nuls, tels~que
\begin{equation}\label{Eq4}
\E = Ku_{1}\oplus Ku_{2}\oplus \cdots \oplus Ku_{s} \oplus N,
\end{equation}
somme directe d'alg\`{e}bres, o\`{u} $s \geq 1$ est un entier et $N$ est soit nul, soit une nil-alg\'{e}bre d'\'{e}volution \`{a} puissances associatives.

De plus $\E_{ss} = Ku_{1}\oplus Ku_{2}\oplus \cdots \oplus Ku_{s}$ est la composante semi-simple de $\E$ et $N = Rad(\E)$ est le nil radical de $\E$.
\end{theo}

\begin{prop} Si $\E$ est non nil, alors $\D(\E) = \D(N)$ o\`{u} $N$ est le nil radical dans la d\'ecomposition de Wedderburn de $\E$.
\end{prop}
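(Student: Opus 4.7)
The plan is to exploit the fact that in \eqref{Eq4} the decomposition is a direct sum of \emph{algebras}, so the idempotents $u_i$ are strongly central in $\E$: one has $u_i u_j = 0$ for $i\neq j$ and $u_i N \subset Ku_i \cap N = 0$. Under this, every derivation of $\E$ must annihilate each $u_i$ and stabilise $N$, so the map $d \mapsto d|_N$ will realise the identification $\D(\E) = \D(N)$.

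First I would show that $d(u_i) = 0$ for every $d \in \D(\E)$. Differentiating the identity $u_i = u_i^2$ gives $d(u_i) = 2u_i d(u_i)$; decomposing $d(u_i) = \sum_{j=1}^{s}\alpha_j u_j + n_0$ along \eqref{Eq4} and using $u_i u_j = 0$ for $j\neq i$ together with $u_i N = 0$, the right-hand side collapses to $2\alpha_i u_i$. Identifying components forces $n_0 = 0$, $\alpha_j = 0$ for $j \neq i$, and then $\alpha_i = 2\alpha_i$, whence $d(u_i) = 0$.

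Next I would verify that $d(N) \subset N$. For $n \in N$, write $d(n) = \sum_j \beta_j u_j + n'$ with $n' \in N$. Differentiating $u_i n = 0$ and using the previous step yields
\[
0 = d(u_i)n + u_i d(n) = u_i d(n) = \beta_i u_i,
\]
so every $\beta_i$ vanishes and $d(n) \in N$. It then remains to observe that $d|_N$ is trivially a derivation of $N$, and conversely any $\delta \in \D(N)$ extends uniquely to a derivation of $\E$ by setting $\delta(u_i) = 0$; the Leibniz rule is checked directly on the four types of products $u_i u_j$, $u_i^2$, $u_i n$, $n n'$ using the multiplication data of \eqref{Eq4}.

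The main (and essentially only) delicate point is the first step, which really uses that \eqref{Eq4} is a decomposition as \emph{algebras} and not just as vector spaces: this is what supplies the relation $u_i N = 0$, without which $d(u_i)$ could acquire a nonzero $N$-component and the whole argument would collapse.
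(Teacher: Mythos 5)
Votre preuve est correcte et suit essentiellement la m\^eme voie que celle du papier~: on d\'erive $u_i = u_i^2$ et on exploite le fait que \eqref{Eq4} est une somme directe d'alg\`ebres (donc $u_iu_j=0$ pour $i\neq j$ et $u_iN=0$) pour forcer $d(u_i)=0$. Vous explicitez en plus la stabilit\'e $d(N)\subseteq N$ et le prolongement \`a $\E$ de toute d\'erivation de $N$, d\'etails que le papier laisse implicites dans son \og d'o\`u le r\'esultat \fg.
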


\begin{proof} Soient $\E = Ku_{1}\oplus Ku_{2}\oplus \cdots \oplus Ku_{s} \oplus N$ la d\'{e}composition de Wedderburn de $\E$, $d \in \D(\E)$ et $v = \sum_{i = 1}^{s}\alpha_{i}u_{i}$. Pour tout $1 \leq i \leq s$, on a $d(u_{i}) = d(u_{i}^{2}) = 2u_{i}d(u_{i}) = 2d_{ii}u_{i}^{2} = 2d_{ii}u_{i}$ entra\^{\i}ne $2d_{ii} = d_{ii},$ donc $d_{ii} = 0$ et $d(u_{i}) = 0$. Ainsi $d(v) = \sum_{i = 1}^{s}\alpha_{i}d(u_{i}) = 0,$ d'o\`{u} le r\'{e}sultat.
\end{proof}

On en d\'{e}duit que le calcul des d\'{e}rivations dans les alg\`{e}bres d'\'{e}volution, \`{a}  puissances associatives, se ram\`{e}ne \`{a} celui des d\'erivations des nil-alg\`{e}bres d'\'{e}volution \`{a}  puissances associatives.

\begin{exe}\cite[Theorem 4.1.1]{Ouatt2018}\label{exe1} Soit $N_{1, 1} : e_{1}^{2} = 0$, l'unique nil-alg\`{e}bre d'\'{e}volution associative ind\'{e}composable de dimension $1$. On a $\D(N_{1, 1}) \simeq K$ alg\`ebre de Lie ab\'elienne.
\end{exe}

\begin{exe}\cite[Theorem 4.1.2]{Ouatt2018}\label{exe2} Soient $N_{2, 2} : e_{1}^{2} = e_{2}, e_{2}^{2} = 0$, l'unique nil-alg\`{e}bre d'\'{e}volution associative
 ind\'{e}composable de dimension $2$ et $d \in \D(N_{2, 2}).$ On a $d(e_{2}) = d(e_{1}^{2}) = 2e_{1}d(e_{1}) = 2d_{11}e_{1}^{2} = 2d_{11}e_{2}$. Ainsi
 $d(e_{1}) = d_{11}e_{1} + d_{21}e_{2}$ et $d(e_{2}) = 2d_{11}e_{2}$. On en d\'{e}duit que $\D(N_{2, 2}) \simeq K^{2}.$

Soient $d, d'$ des d\'{e}rivations de $N_{2, 2}$. On a
$d\circ d' (e_{1}) =  d_{11}'d(e_{1}) + d_{21}'d(e_{2}) = d_{11}'d_{11}e_{1} + (d_{11}'d_{21} + 2d_{21}'d_{11})e_{2}$ et $[d, d'](e_{1}) = (d_{21}'d_{11} - d_{21}d_{11}')e_{2}$.
Aussi $d\circ d' (e_{2}) =  2d_{11}'d(e_{2}) = 4d_{11}'d_{11}e_{2}$ et $[d, d'](e_{2}) = 0$. On en d\'{e}duit que $\D(N_{2, 2})$ est l'alg\`ebre de Lie non-ab\'elienne.
\end{exe}

\begin{exe}\cite[Table~1]{Ouatt2018}\label{exe3} Soient $N_{3, 2} = N_{2, 2}\oplus N_{1, 1} : e_{1}^{2} = e_{2}, e_{2}^{2} = e_{3}^{2} = 0$ et $d \in \D(N_{3, 2}).$ On a $d(e_{2}) = d(e_{1}^{2}) = 2e_{1}d(e_{1}) = 2d_{11}e_{1}^{2} = 2d_{11}e_{2}$ ;
$0 = d(e_{1}e_{j}) = e_{1}d(e_{j}) + d(e_{1})e_{j} = d_{1j}e_{1}^{2} + d_{j1}e_{j}^{2} = d_{1j}e_{2}$ ($j = 2, 3$), donc $d_{12} = d_{13} = 0.$
Ainsi $d(e_{1}) = d_{11}e_{1} + d_{21}e_{2} + d_{31}e_{3}$, $d(e_{2}) = 2d_{11}e_{2}$ et $d(e_{3}) = d_{23}e_{2} + d_{33}e_{3}.$ On en d\'{e}duit que $\D(N_{3, 2}) \simeq K^{5}.$
\end{exe}

\begin{exe}\cite[Table~1]{Ouatt2018}\label{exe4} Soient $N_{4, 4} = N_{2, 2}\oplus N_{2, 2} : e_{1}^{2} = e_{2}, e_{2}^{2} = 0, e_{3}^{2} = e_{4}, e_{4}^{2} = 0$ et $d \in \D(N_{4, 4}).$ On a $d(e_{2}) = d(e_{1}^{2}) = 2e_{1}d(e_{1}) = 2d_{11}e_{1}^{2} = 2d_{11}e_{2}$ ; $d(e_{4}) = d(e_{3}^{2}) = 2e_{3}d(e_{3}) = 2d_{33}e_{3}^{2} = 2d_{33}e_{4}$. On a aussi $0 = d(e_{1}e_{j}) = e_{1}d(e_{j}) + d(e_{1})e_{j} = d_{1j}e_{1}^{2} + d_{j1}e_{j}^{2} = d_{1j}e_{2} + d_{j1}e_{j}^{2}$ ($j = 2, 3, 4$), donc $d_{12} = 0$, $d_{13} = d_{31} = 0$ et $d_{14} = 0$ en prenant respectivement $j = 2$, $j = 3$ et $j = 4$ ; $0 = d(e_{3}e_{j}) = e_{3}d(e_{j}) + d(e_{3})e_{j} = d_{3j}e_{3}^{2} + d_{j3}e_{j}^{2} = d_{3j}e_{4} + d_{j3}e_{j}^{2}$ ($j = 2, 4$), donc $d_{32} = 0$ et $d_{34} = 0$ en prenant respectivement $j = 2$ et $j = 4$. Ainsi $d(e_{1}) = d_{11}e_{1} + d_{21}e_{2} + d_{41}e_{4}$, $d(e_{2}) = 2d_{11}e_{2}$, $d(e_{3}) = d_{23}e_{2} + d_{33}e_{3} + d_{43}e_{4}$, $d(e_{4}) = 2d_{33}e_{4}$. On en d\'{e}duit que $\D(N_{4, 4}) \simeq K^{6}.$
\end{exe}

\section{Nil-alg\`{e}bres d\'{e}composables}

Soit $\E = I_{1} + I_{2}$ une nil-alg\`{e}bre d'\'{e}volution d\'{e}composable.

\begin{theo}[Caract\'{e}risation des d\'{e}rivations] Soit $d : \E \longrightarrow \E$ une d\'{e}rivation. Alors $d$ est d\'{e}termin\'{e}e par un unique
quadruplet $(f_{d}, g_{d}, \ell_d, k_{d})$ v\'{e}rifiant les conditions suivantes~:
\begin{compactenum}[$i)$]
%\begin{enumerate}\renewcommand{\theenumi}{$(\roman{enumi})$}
\item $d(x_{1}) = f_{d}(x_{1}) + \ell_d(x_{1})$ avec $x_{1} \in I_{1}$ ;
\item $d(x_{2}) = g_{d}(x_{2}) + k_{d}(x_{2})$ avec $x_{2} \in I_{2}$ ;

Les applications lin\'eaires $f_{d} \in End(I_{1})$, $g_{d} \in End(I_{2})$, $\ell_d \in \Hom (I_{1}, I_{2})$ et $k_{d} \in \Hom (I_{2}, I_{1})$ v\'{e}rifient~:
\item $f_{d} \in \D(I_{1})$ ;
\item $g_{d} \in \D(I_{2})$ ;
\item $\ell_d \in \Hom ^{0}(I_{1}, I_{2}) = \{h \in \Hom (I_{1}, I_{2}) \mid h(I_{1}^{2}) = 0$ et $h(I_{1}) \subseteq ann(I_{2})\}$ ;
\item $k_{d} \in \Hom ^{0}(I_{2}, I_{1}) = \{h \in \Hom (I_{2}, I_{1}) \mid h(I_{2}^{2}) = 0$ et $h(I_{2}) \subseteq ann(I_{1})\}$.
%\end{enumerate}
\end{compactenum}
\end{theo}

\begin{proof} Soit $d$ une d\'{e}rivation de $\E = I_{1} + I_{2}$. Puisque $d(x_{i}) \in \E$ (avec $x_{i} \in I_{i}$), alors posons
$d(x_{1}) = f_{d}(x_{1}) + \ell_d(x_{1})$ et $d(x_{2}) = k_{d}(x_{2}) + g_{d}(x_{2})$ avec $f_{d}(x_{1}), k_{d}(x_{2}) \in I_{1}$ et $\ell_d(x_{1}), g_{d}(x_{2}) \in I_{2}$.
On v\'{e}rifie que $f_{d} \in End(I_{1})$, $g_{d} \in End(I_{2})$, $\ell_d \in \Hom (I_{1}, I_{2})$ et $k_{d} \in \Hom (I_{2}, I_{1})$, d'o\`u $i)$ et $ii)$.

Soient $x_{1}, x_{1}' \in I_{1}$ et $x_{2}, x_{2}' \in I_{2}$.
On a~: $d(x_{1}x_{1}') = x_{1}(f_{d}(x_{1}') + \ell_d(x_{1}')) + (f_{d}(x_{1}) + \ell_d(x_{1}))x_{1}' = x_{1}f_{d}(x_{1}') + f_{d}(x_{1})x_{1}' \in I_{1}$ ;
puisque $d(x_{1}x_{1}') = f_{d}(x_{1}x_{1}') + \ell_d(x_{1}x_{1}')$ alors $f_{d}(x_{1}x_{1}') = x_{1}f_{d}(x_{1}') + f_{d}(x_{1})x_{1}'$ et $\ell_d(x_{1}x_{1}') = 0$,
donc $f_{d} \in \D(I_{1})$, on obtient $iii)$ et $\ell_d(I_{1}^{2}) = 0.$

En calculant $d(x_2x'_2)$, on montre de m\^{e}me que $g_{d} \in \D(I_{2})$ d'o\`{u} $iv)$ et $k_{d}(I_{2}^{2}) = 0$.
On a~: $0 = d(x_{1}x_{2}) = x_{1}(g_{d}(x_{2}) + k_{d}(x_{2})) + (f_{d}(x_{1}) + \ell_d(x_{1}))x_{2} = x_{1}k_{d}(x_{2}) + \ell_d(x_{1})x_{2}$ entra\^{\i}ne $x_{1}k_{d}(x_{2}) = 0$ et $\ell_d(x_{1})x_{2} = 0$, donc $\ell_d(I_{1}) \subseteq ann(I_{2})$ et $k_{d}(I_{2}) \subseteq ann(I_{1})$. Comme $\ell_d(I_{1}^{2}) = 0$ et $\ell_d(I_{1}) \subseteq ann(I_{2})$ alors, nous obtenons $v)$ ; aussi $k_{d}(I_{2}^{2}) = 0$ et $k_{d}(I_{2}) \subseteq ann(I_{1})$ entra\^{\i}nent $vi)$.
\end{proof}

Soient $d$ et $d'$ deux d\'{e}rivations de $\E.$ Comme $[d, d']$ est une d\'{e}rivation, alors nous d\'{e}terminons l'unique quadruplet qui lui est associ\'{e}.

\begin{theo}\label{Mult} Soient $d$ et $d'$ deux d\'{e}rivations de $\E.$ Alors l'unique quadruplet associ\'{e} \`{a} $[d, d']$ est $(f_{[d, d']}, g_{[d, d']}, \ell_{[d, d']}, k_{[d, d']})$ v\'{e}rifiant~:
\begin{compactenum}[$i)$]
\item $f_{[d, d']} = [f_{d}, f_{d'}] + (k_{d}\circ\ell_{d'} - k_{d'}\circ\ell_d)$ ;
\item $\ell_{[d, d']} = (\ell_d\circ f_{d'} - \ell_{d'}\circ f_{d}) + (g_{d}\circ\ell_{d'} - g_{d'}\circ\ell_d)$ ;
\item $g_{[d, d']} = [g_{d}, g_{d'}] + (\ell_d\circ k_{d'} - \ell_{d'}\circ k_{d})$ ;
\item $k_{[d, d']} = (k_{d}og_{d'} - k_{d'}og_{d}) + (f_{d}\circ k_{d'} - f_{d'}\circ k_{d})$.
\end{compactenum}
\end{theo}

\begin{proof} On a $d\circ d' (x_{1}) = d(f_{d'}(x_{1})) + d(\ell_{d'}(x_{1})) = f_{d}\circ f_{d'}(x_{1}) + \ell_d\circ f_{d'}(x_{1}) + g_{d}\circ\ell_{d'}(x_{1}) + k_{d}\circ\ell_{d'}(x_{1})$ et $[d, d'](x_{1}) = (f_{d}\circ f_{d'} - f_{d'}\circ f_{d})(x_{1}) + (\ell_d\circ f_{d'} - \ell_{d'}\circ f_{d})(x_{1}) + (g_{d}\circ\ell_{d'} - g_{d'}\circ\ell_d)(x_{1}) + (k_{d}\circ\ell_{d'} - k_{d'}\circ\ell_d)(x_{1}).$ Donc $f_{[d, d']} = [f_{d}, f_{d'}] + (k_{d}\circ\ell_{d'} - k_{d'}\circ\ell_d)$ et $l_{[d, d']} = (\ell_d\circ f_{d'} - \ell_{d'}\circ f_{d}) + (g_{d}\circ\ell_{d'} - g_{d'}\circ\ell_d)$, d'o\`{u} $i)$ et $ii)$. On montre de m\'{e}me $iii)$ et $iv)$ en calculant $[d, d'](x_{2})$.
\end{proof}

Notons $L_{K}(\E) = \D(I_{1})\times \D(I_{2})\times \Hom ^{0}(I_{1}, I_{2})\times \Hom ^{0}(I_{2}, I_{1})$ ; la multiplication dans $L_{K}(\E)$ est d\'{e}finie par $[(f_{d}, g_{d}, \ell_d, k_{d}), (f_{d'}, g_{d'}, \ell_{d'}, k_{d'})] = (f_{[d, d']}, g_{[d, d']}, l_{[d, d']}, k_{[d, d']})$ o\`{u} le quadruplet ($f_{[d, d']}$, $g_{[d, d']}$, $\ell_{[d, d']}$, $k_{[d, d']}$) satisfait les relations $i)$, $ii)$, $iii)$ et $iv)$ du Th\'{e}or\`{e}me~\ref{Mult}.

\begin{prop}\label{Iso}  L'application $\varphi : \D(\E) \longrightarrow L_{K}(\E),$ $d \longmapsto (f_{d}, g_{d}, \ell_d, k_{d})$ est un isomorphisme d'alg\`{e}bres de Lie.
\end{prop}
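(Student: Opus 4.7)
Le plan est de vérifier successivement que $\varphi$ est bien définie et injective, qu'elle est surjective, qu'elle est linéaire, puis qu'elle est compatible avec le crochet de Lie. Le théorème de caractérisation précédent fait déjà tout le travail pour la première étape : à toute $d\in\D(\E)$ il associe un \emph{unique} quadruplet $(f_d,g_d,\ell_d,k_d)$ satisfaisant les conditions $iii)$--$vi)$, c'est-à-dire appartenant à $L_K(\E)=\D(I_1)\times\D(I_2)\times\Hom^0(I_1,I_2)\times\Hom^0(I_2,I_1)$. L'existence montre que $\varphi$ est bien définie, l'unicité entraîne l'injectivité. La linéarité résulte de ce que $f_d, g_d, \ell_d, k_d$ sont obtenus par projection de $d$ sur les idéaux $I_1$ et $I_2$, opérations linéaires en $d$.

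Pour la surjectivité, je pars d'un quadruplet arbitraire $(f,g,\ell,k)\in L_K(\E)$ et je définis l'application linéaire $d:\E\longrightarrow\E$ par $d(x_1+x_2)=f(x_1)+\ell(x_1)+g(x_2)+k(x_2)$ pour tous $x_1\in I_1$, $x_2\in I_2$. Il faut vérifier la règle de Leibniz sur les trois types de produits d'une base : sur $x_1x_1'\in I_1$, on utilise $f\in\D(I_1)$ et $\ell(I_1^2)=0$ ; sur $x_2x_2'\in I_2$, on utilise $g\in\D(I_2)$ et $k(I_2^2)=0$ ; enfin pour le produit mixte, $x_1x_2=0$ puisque $I_1$ et $I_2$ sont des idéaux en somme directe, et l'on a bien $x_1d(x_2)+d(x_1)x_2=x_1k(x_2)+\ell(x_1)x_2=0$ car $k(x_2)\in\ann(I_1)$ et $\ell(x_1)\in\ann(I_2)$. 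L'application $d$ est donc une dérivation, et par construction $\varphi(d)=(f,g,\ell,k)$.

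Il reste la compatibilité avec le crochet : c'est exactement le contenu du Théorème~\ref{Mult}, puisque la multiplication sur $L_K(\E)$ a été définie précisément par les formules $i)$--$iv)$ qui y sont démontrées. Autrement dit, $\varphi([d,d'])=(f_{[d,d']},g_{[d,d']},\ell_{[d,d']},k_{[d,d']})=[\varphi(d),\varphi(d')]$. La seule étape contenant une véritable vérification est la surjectivité ; elle est sans difficulté conceptuelle, mais c'est là que l'on exploite pleinement les conditions restrictives $h(I_i^2)=0$ et $h(I_i)\subseteq\ann(I_j)$ définissant $\Hom^0(I_i,I_j)$, garantissant que ces conditions sont non seulement nécessaires mais aussi suffisantes pour reconstituer une dérivation de $\E$ à partir d'un quadruplet de $L_K(\E)$.
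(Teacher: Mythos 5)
Votre preuve est correcte et suit essentiellement la même démarche que celle du papier : injectivité (noyau nul) via l'unicité du quadruplet, surjectivité par la construction explicite de $d$ à partir de $(f,g,\ell,k)$ en vérifiant la règle de Leibniz sur les produits $x_1x_1'$, $x_2x_2'$ et $x_1x_2$ grâce aux conditions $h(I_i^2)=0$ et $h(I_i)\subseteq\ann(I_j)$, et compatibilité avec le crochet par le Théorème~\ref{Mult}. Rien à signaler.
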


\begin{proof} On a
$[\varphi(d), \varphi(d')] = [(f_{d}, g_{d}, \ell_d, k_{d}), (f_{d'}, g_{d'}, \ell_{d'}, k_{d'})] = (f_{[d, d']}, g_{[d, d']}, \ell_{[d, d']}, k_{[d, d']}) \\ = \varphi([d, d'])$ et $ker(\varphi) = \{0\}$, donc $\varphi$ est un monomorphisme d'alg\`{e}bres de Lie.

Soient $(f, g, \ell, k) \in L_{K}(\E)$ et $d : \E \longrightarrow \E$ un endomorphisme d\'{e}fini par $d(x_{1}) = f(x_{1}) + \ell(x_{1})$ et $d(x_{2}) = g(x_{2}) + k(x_{2})$. On a $d(x_{1}x_{1}') = f(x_{1}x_{1}') = x_{1}f(x_{1}') + f(x_{1})x_{1}' = x_{1}(f(x_{1}') + \ell(x_{1}')) + (f(x_{1}) + \ell(x_{1}))x_{1}' = x_{1}d(x_{1}') + d(x_{1})x_{1}'$. On montre de m\^{e}me que $d(x_{2}x_{2}') = x_{2}d(x_{2}') + d(x_{2})x_{2}'$.

Aussi $d(x_{1}x_{2}) =  0 = x_{1}(g(x_{2}) + k(x_{2})) + (f(x_{1}) + \ell(x_{1}))x_{2} = x_{1}d(x_{2}) + d(x_{1})x_{2}$ car $x_{1}g(x_{2}) = x_{1}k(x_{2}) = x_{2}f(x_{1}) = x_{2}l(x_{1}) = 0$. Par cons\'{e}quent $\varphi$ est surjectif, d'o\`{u} le th\'{e}or\`{e}me.
\end{proof}

\begin{exe}\label{exe5} On consid\`{e}re les alg\`{e}bres $N_{1, 1}$, $N_{2, 2}$, $N_{3, 2}$ et $N_{4, 4}$ d\'{e}finies ci-dessus. On a

$\Hom ^{0}(N_{1, 1}, N_{2, 2}) = \{h \in \Hom (N_{1, 1}, N_{2, 2}) ; h(N_{1, 1}^{2}) = 0$ et $h(N_{1, 1}) \subseteq ann(N_{2, 2}) \} \simeq K.$

$\Hom ^{0}(N_{2, 2}, N_{1, 1}) = \{h \in \Hom (N_{2, 2}, N_{1, 1}) ; h(N_{2, 2}^{2}) = 0$ et $h(N_{2, 2}) \subseteq ann(N_{1, 1}) \} \simeq K$, car $N_{2, 2}^{2} = <e_{2}>$, $0 = h(e_{1}^{2}) = h(e_{2})$ et $h(e_{1}) =  \alpha e_{3}.$

$\Hom ^{0}(N_{2, 2}, N_{2, 2}) = \{h \in \Hom (N_{2, 2}, N_{2, 2}) ; h(N_{2, 2}^{2}) = 0$ et $h(N_{2, 2}) \subseteq ann(N_{2, 2}) \} \simeq K$
Car $N_{2, 2}^{2} = <e_{2}>$, $0 = h(e_{1}^{2}) = h(e_{2})$ et $h(e_{1}) =  \alpha e_{4}.$

On en d\'{e}duit que $\D(N_{2, 2}\oplus N_{1, 1}) \simeq K^{2}\times K\times K\times K$ et $\D(N_{2, 2}\oplus N_{2, 2}) \simeq K^{2}\times K^{2}\times K\times K.$
\end{exe}

De tout ce qui pr\'{e}c\`{e}de, on en d\'{e}duit qu'il est suffisant de d\'{e}terminer les d\'{e}rivations dans les nil-alg\`{e}bres d'\'{e}volution  ind\'{e}composables.

\begin{lem}[\cite{Elduque2016}, Corollary~2.6] Soit $\E$ une nil-algèbre d'évolution de dimension finie telle que $\dim_K(\ann(\E))\geq \frac12\dim_K(\E)\geq 1$. Alors $\E$ est décomposable.
\end{lem}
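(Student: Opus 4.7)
The approach is to use the hypothesis on the annihilator to split off a nonzero ideal of $\E$ sitting inside $\ann(\E)$.

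First, I would fix a natural basis $\{e_1,\ldots,e_n\}$ of $\E$ adapted to the annihilator, i.e.\ with $\ann(\E)=\mathrm{span}\{e_{s+1},\ldots,e_n\}$ where $s=n-\dim\ann(\E)$; the hypothesis then reads $s\leq n-s$. A preliminary step, already present in the proof of the ADN theorem above, shows that the nil condition forces $a_{ii}=0$ for all $i\leq s$: since $e_i\cdot e_k=0$ for $k\neq i$, one obtains the identity $e_i^{k+1}=a_{ii}^{k-1}e_i^{2}$ for $k\geq 1$, and together with $e_i^{2}\neq 0$ this forces $a_{ii}=0$ as soon as some higher power of $e_i$ vanishes.

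Next I would split each square as $e_i^{2}=v_i+w_i$ with $v_i\in V:=\mathrm{span}\{e_1,\ldots,e_s\}$ and $w_i\in W:=\ann(\E)$, and introduce the linear map $\psi\colon V\to W$ defined by $\psi(e_i)=w_i$. Its image has dimension $r\leq s\leq n-s=\dim W$, so one can extend a basis $\{f_1,\ldots,f_r\}$ of $\mathrm{Im}(\psi)$ to a basis $\{f_1,\ldots,f_r,g_1,\ldots,g_t\}$ of $W$, with $t=(n-s)-r$. The next step is to check that $H:=\mathrm{span}\{e_1,\ldots,e_s,f_1,\ldots,f_r\}$ and $G:=\mathrm{span}\{g_1,\ldots,g_t\}$ are ideals of $\E$, yielding $\E=H\oplus G$ as evolution algebras with natural bases inherited from the construction. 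For $G$ this is immediate because $G\subseteq\ann(\E)$, so every product involving $G$ vanishes. For $H$, the identities $e_i^{2}=v_i+\psi(e_i)\in V+\mathrm{Im}(\psi)=H$ and $e_ie_j=0$ for $i\neq j$ show that $H\cdot\E\subseteq H$.

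The main obstacle is to ensure $G\neq 0$, i.e.\ $t>0$. This holds as soon as $\psi$ fails to be surjective; in particular, under the strict inequality $\dim\ann(\E)>\dim\E/2$ one automatically has $r\leq s<n-s$, forcing $t\geq 1$ and yielding a nontrivial decomposition. The borderline case $s=n-s$ with $\psi$ an isomorphism $V\cong W$ is the delicate point of the argument: in this situation, I would expect to need a separate treatment, probably by examining the induced structure on $V$ encoded by the coefficients $a_{ij}$ with $i,j\leq s$ in order to locate an invariant summand after a suitable basis change in $V$, and to propagate it through $\psi$ to a compatible decomposition of $W$.
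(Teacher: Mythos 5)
The paper itself offers no proof of this lemma (it is imported from Elduque--Labra), so your argument stands on its own. What you do prove is fine: $H=V+\mathrm{Im}\,\psi$ contains $\E^2$, hence $H\E\subseteq\E^2\subseteq H$, any complement $G$ of $\mathrm{Im}\,\psi$ inside $\ann(\E)$ satisfies $G\E=0$, and $\E=H\oplus G$ is a decomposition whenever $G\neq0$ (your preliminary reduction $a_{ii}=0$ is correct but never used in this construction). The genuine gap is the case $\mathrm{Im}\,\psi=\ann(\E)$, which you leave as a vague plan; that is exactly where the content of the lemma sits, and your proposed fix (a basis change in $V$ propagated through $\psi$) does not point at the actual mechanism. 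Moreover your dichotomy is coarser than it should be: the right condition is not surjectivity of $\psi$ but whether $\ann(\E)\subseteq\E^2$. If there is some $u\in\ann(\E)\setminus\E^2$, take $\J=Ku$ and for $\I$ any complement of $Ku$ containing $\E^2$; then $\I\E\subseteq\E^2\subseteq\I$ and $\J\E=0$, so $\E=\I\oplus\J$ is a decomposition. This covers strictly more than your construction: e.g.\ for $e_1^2=e_2+e_3$, $e_2^2=e_4$, $e_3^2=e_4^2=0$ your $\psi$ is bijective, yet $\E=\langle e_1,e_2+e_3,e_4\rangle\oplus Ke_3$ decomposes, so this algebra would sit, unresolved, in your ``delicate'' bucket.

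In the remaining case $\ann(\E)\subseteq\E^2$, the hypothesis forces equality everywhere in $\dim\ann(\E)\leq\dim\E^2\leq s\leq n-s\leq\dim\ann(\E)$ (where $s=n-\dim\ann(\E)$ and $\E^2$ is spanned by $e_1^2,\dots,e_s^2$): hence $\ann(\E)=\E^2$, $n=2s$, and $e_1^2,\dots,e_s^2$ is a basis of $\ann(\E)$. In particular every $e_i^2$ lies in the annihilator, so each $\langle e_i,e_i^2\rangle$ is an ideal and $\E=\bigoplus_{i=1}^{s}\langle e_i,e_i^2\rangle$; for $s\geq2$ grouping these into two blocks gives the decomposition, with no further analysis of the structure constants on $V$ needed. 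For $s=1$ this extremal case is $N_{2,2}$, which is indecomposable although it satisfies $\dim\ann(\E)=\tfrac12\dim\E\geq1$: the lemma as transcribed here (with $\geq$ and no restriction on the dimension) is actually contradicted by the paper's own Table~1, and is only correct under a strict inequality or for $\dim\E\geq3$. So your borderline case cannot be closed as stated; once the statement is corrected, it is closed by the observation that in the extremal situation the squares themselves lie in the annihilator, not by a basis change in $V$.
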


\begin{prop}[\cite{Ouatt2018}, Table~$1$] Soit $\E$ une nil-alg\`ebre d'\'evolution ind\'{e}composable \`a puissances associatives de dimension $\leq 4$. Alors $\E$ est isomorphe \`a une et une seule des alg\`ebres dans la Table~$1$.
{\footnotesize
\begin{longtable}[c]{|p{6mm}|c|p{6,6cm}|c|c|}
 \caption*{\underline{\textbf{Table 1:}} $\dim(\E)\leq 4$}\\
  \hline
  % after \\: \hline or \cline{col1-col2} \cline{col3-col4} ...
$\dim$&  {$\E$} &   {\small Multiplication} &   {\small dim(ann$(\E)$)} &   {\small Associative}  \\\hline
\endhead \hline
\endfoot
\multirow{1}{6mm}{$\mathbf{1}$}&  {$N_{1,1}$} &   {$e_{1}^{2} = 0$} &   {$1$} &   {\small Oui}  \\
 \hline
\multirow{1}{6mm}{$\mathbf{2}$} &{$N_{2,2}$} &   {$e_{1}^{2} = e_{2},$ $e_{2}^{2} = 0$} &   {$1$} & {\small Oui} \\
 \hline
\multirow{1}{6mm}{$\mathbf{3}$}&{$N_{3,3}(\alpha)$} &   {$e_{1}^{2} = e_{3},$ $e_{2}^{2} = \alpha e_{3},$ $e_{3}^{2} = 0$ avec $\alpha \in K^{*}$} &   {$1$} &   {\small Oui} \\
 \hline
\multirow{2}{6mm}{$\mathbf{4}$}&{$N_{4,5}(\alpha, \beta)$} & {$e_{1}^{2} = e_{4},$ $e_{2}^{2} = \alpha e_{4},$ $e_{3}^{2} = \beta e_{4},$ $e_{4}^{2} = 0$ avec $\alpha, \beta \in K^{*}$} &   {$1$} &   {\small Oui}  \\\cline{2-5}
  &{$N_{4,6}$} & {$e_{1}^{2} = e_{2} + e_{3},$ $e_{2}^{2} = e_{4},$ $e_{3}^{2} = -e_{4},$ $e_{4}^{2} = 0$} &   {$1$} &   {\small Non}  \\
\hline
\end{longtable}
}
\end{prop}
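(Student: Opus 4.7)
The plan is to recover \cite[Table~$1$]{Ouatt2018} by a case analysis on $n = \dim(\E) \in \{1,2,3,4\}$, combining three tools: the Albert identity $x^{2}x^{2} = x^{4}$ from the earlier theorem (valid since $\mathrm{char}\,K \neq 2,3,5$), the preceding Elduque--Labra lemma (which forces $2\dim(\ann(\E)) < \dim(\E)$ under indecomposability), and the restricted set of isomorphisms of evolution algebras in a natural basis --- essentially permutations and diagonal rescalings $e_{i} \mapsto \lambda_{i} e_{i}$, with occasional non-diagonal changes of natural basis in degenerate situations.

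First I would expand $x^{2}x^{2} = x^{4}$ in the natural basis: writing $x = \sum c_{i} e_{i}$ with $e_{i}^{2} = \sum_{k} a_{ik} e_{k}$ and using $e_{i} e_{j} = 0$ for $i \neq j$, each of $x^{2}$, $x^{3}$, $x^{4}$ and $x^{2}x^{2}$ becomes an explicit polynomial in the $c_{i}$. Equating the coefficients of the monomials $c_{i}^{4}$ and $c_{i}^{2} c_{j}^{2}$ in $x^{2}x^{2} - x^{4} = 0$ yields a finite system of quadratic constraints on the structure matrix $M = (a_{ik})$. These constraints, together with nil-ness and indecomposability, severely restrict the possible shape of $M$.

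For each small dimension the argument then goes as follows. In dimension $1$, nil-ness immediately gives $e_{1}^{2} = 0$ and $\E \cong N_{1,1}$. In dimension $2$, nil forces $\dim(\ann(\E)) \geq 1$ while Elduque--Labra and indecomposability force $\dim(\ann(\E)) = 1$; after choosing $\ann(\E) = K e_{2}$, the power-associativity constraints together with indecomposability force $e_{1}^{2} = \alpha e_{2}$ with $\alpha \neq 0$, and rescaling $e_{1}$ yields $N_{2,2}$. In dimension $3$, the same argument gives $\dim(\ann(\E)) = 1$; after taking $\ann(\E) = K e_{3}$, indecomposability forces $e_{1}^{2}, e_{2}^{2} \in K e_{3} \setminus \{0\}$, and the only invariant that survives rescaling is the ratio $\alpha = e_{2}^{2}/e_{1}^{2} \in K^{*}$, producing $N_{3,3}(\alpha)$. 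Dimension $4$ requires the bulk of the work: again $\dim(\ann(\E)) = 1$, and the case analysis splits according to whether every $e_{i}^{2}$ with $i \leq 3$ lies in $\ann(\E)$; the affirmative sub-case, after rescaling, yields the two-parameter associative family $N_{4,5}(\alpha,\beta)$.

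The main obstacle is the remaining sub-case in dimension $4$, where some $e_{i}^{2}$ has a non-trivial component outside $\ann(\E)$. Here one must exploit the power-associativity identities to show that this configuration is rigid: exactly one $e_{i}^{2}$ (say $e_{1}^{2}$) has a component in $\E \setminus \ann(\E)$, this component can be normalized to $e_{2} + e_{3}$ by a non-diagonal change of natural basis, and the identities then force $e_{2}^{2}$ and $e_{3}^{2}$ to be opposite multiples of $e_{4}$, uniquely producing $N_{4,6}$, which turns out to be non-associative. Finally one would verify pairwise non-isomorphism of the five algebras using invariants such as $\dim(\E^{2})$, associativity, $\dim(\ann(\E))$, and the rank of $M$; since all of these verifications appear in \cite{Ouatt2018}, I would refer to that reference for the complete bookkeeping.
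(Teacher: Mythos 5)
You should first note that the paper itself contains no proof of this Proposition: it is imported verbatim from \cite{Ouatt2018} (Table~1), so there is no internal argument to compare with --- what you propose is a reconstruction of the classification carried out in that reference. As an outline your strategy is the right one (nilness kills the diagonal structure constants, Albert's criterion $x^{2}x^{2}=x^{4}$ gives relations such as $e_{i}^{2}e_{i}^{2}=0$ and the mixed conditions from $x=e_{i}+e_{j}$, and one normalizes by changes of natural basis), but as a proof it has genuine gaps. The most visible one is your use of the Elduque--Labra lemma: read as ``indecomposable $\Rightarrow 2\dim\ann(\E)<\dim\E$'' it is contradicted by $N_{2,2}$ itself, which is indecomposable with $\dim\ann(N_{2,2})=1=\frac12\dim N_{2,2}$, and your dimension-$2$ paragraph then contradicts your own reading of the lemma. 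The lemma also sits exactly on the boundary in the case you really need it, namely excluding $\dim\ann(\E)=2$ in dimension $4$; you should either check the precise statement in \cite{Elduque2016} or argue directly (power-associativity forces $e_{1}^{2},e_{2}^{2}\in\ann(\E)$ there, after which the algebra visibly splits). Likewise ``nil forces $\dim\ann(\E)\geq1$'' is asserted without justification.

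The second, more substantial gap is that the step you yourself call the main obstacle is only asserted: in dimension $4$ with $\ann(\E)=Ke_{4}$ you must actually run the identities to show that at most one $e_{i}^{2}$ can have a component outside $\ann(\E)$, that writing $e_{1}^{2}=ae_{2}+be_{3}+ce_{4}$ the relation $a^{2}e_{2}^{2}+b^{2}e_{3}^{2}=0$ forces $a,b\neq0$ and $e_{2}^{2},e_{3}^{2}$ to be opposite (nonzero) multiples of $e_{4}$ after rescaling, and that the $e_{4}$-component $c$ can be absorbed (e.g.\ replacing $e_{2}$ by $e_{2}+\lambda e_{4}$) while the new basis remains natural, i.e.\ pairwise orthogonal --- this last verification is precisely where the ``occasional non-diagonal changes of natural basis'' need care, since such changes are constrained by the orthogonality conditions. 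Finally, you defer the pairwise non-isomorphism (and the claim that $\alpha$, resp.\ $(\alpha,\beta)$, are the only surviving invariants, which in fact only holds up to the action of squares of $K^{*}$) to \cite{Ouatt2018}; if the aim is an independent proof of Table~1 this is circular, and if it is not, the argument reduces to the citation the paper already makes.
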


\begin{prop}[\cite{Ouatt2018}, Table~$2$]
Soit $\E$ une nil-alg\`ebre d'\'evolution ind\'{e}composable \`a puissances associatives de dimension $5$. Alors $\E$ est isomorphe \`a une et une seule des alg\`ebres dans la Table~$2$.

{\footnotesize
%\small
\begin{longtable}[c]{|c|p{7cm}|c|c|}
 \caption*{\underline{\textbf{Table 2:}} $\dim(\E)=5$}\\
  \hline
  % after \\: \hline or \cline{col1-col2} \cline{col3-col4} ...
  {$\E$} &   {\small Multiplication} &   {\small dim(ann($\E$))} &   {\small Associative}  \\\hline
\endhead \hline
\endfoot
  {$N_{5,8}(\alpha, \beta, \gamma)$} &   {$e_{1}^{2} = e_{5},$ $e_{2}^{2} = \alpha e_{5},$ $e_{3}^{2} = \beta e_{5},$ $e_{4}^{2} = \gamma e_{5},$ $e_{5}^{2} = 0$ avec $\alpha, \beta, \gamma \in K^{*}$} &   {$1$} &   {\small Oui}  \\\hline
  {$N_{5,9}(\alpha, \beta)$} &   {$e_{1}^{2} = e_{4},$ $e_{2}^{2} = \alpha e_{4} + \beta e_{5},$ $e_{3}^{2} = e_{5},$ $e_{4}^{2} = e_{5}^{2} = 0$ avec $\alpha, \beta \in K^{*}$} &   {$2$} &   {\small Oui}  \\\hline
  {$N_{5,10}(\alpha)$} &   {$e_{1}^{2} = e_{2} + e_{3},$ $e_{2}^{2} = e_{5},$ $e_{3}^{2} = -e_{5},$ $e_{4}^{2} = \alpha e_{5} ,$ $e_{5}^{2} = 0$ avec $\alpha \in K^{*}$} &   {$1$} &   {\small Non}  \\\hline
  {$N_{5,11}(\alpha)$} &   {$e_{1}^{2} = e_{2} + e_{3},$ $e_{2}^{2} = e_{5},$ $e_{3}^{2} = -e_{5},$ $e_{4}^{2} = \alpha(e_{2} + e_{3}),$ $e_{5}^{2} = 0$ avec $\alpha \in K^{*}$} &   {$1$} &   {\small Non}  \\\hline
  {$N_{5,12}(\alpha, \beta)$} &   {$e_{1}^{2} = e_{2} + e_{3},$ $e_{2}^{2} = e_{5},$ $e_{3}^{2} = -e_{5},$ $e_{4}^{2} = \alpha(e_{2} + e_{3}) +  \beta e_{5},$ $e_{5}^{2} = 0$ avec $\alpha, \beta \in K^{*}$} &   {$1$} &   {\small Non}  \\
\hline
\end{longtable}
}
\end{prop}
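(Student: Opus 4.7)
The plan is to work with a natural basis $B=\{e_1,\dots,e_5\}$ of $\E$ with multiplication matrix $M=(a_{ij})$, and to reduce the problem to a small number of cases indexed by $\dim_K(\ann(\E))$. Since $\E$ is nil and indecomposable, the Elduque--Labra lemma quoted above forces $\dim_K(\ann(\E))\leq 2$, so after a suitable permutation of basis vectors there are only two global situations: $\ann(\E)=\langle e_5\rangle$ or $\ann(\E)=\langle e_4,e_5\rangle$. In both cases the remaining $e_i^2$ must be nonzero and, because $\E$ is a nil-algebra, their supports must lie in $\langle e_2,\dots,e_5\rangle$ (one checks that an $e_i^2$ with a nonzero coefficient on $e_i$ would generate idempotent behavior, contradicting nilpotency).

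The second step is to exploit the power-associativity identity $x^2x^2=x^4$ from Theorem \ref{Wedder}'s setting (via Albert, characteristic $\neq 2,3,5$). For $x=\sum x_ie_i$ one has the evolution formula $x^2=\sum_i x_i^2 e_i^2$, and one expands both $x^4=x(x(x\cdot x))$ and $x^2\cdot x^2$ as polynomials in $x_1,\dots,x_5$ whose coefficients are products of the $a_{ij}$. Identifying these coefficients yields a system of quadratic relations on $M$, the key one being that whenever $e_i^2$ and $e_j^2$ have a common non-annihilator component $e_k$ (so $a_{ik}a_{jk}\neq 0$), the two squares $e_k^2$ and the ``cross'' squares involving $i,j$ must balance. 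This is exactly the mechanism that produces the non-associative family with $e_1^2=e_2+e_3$, $e_2^2+e_3^2=0$ inherited from $N_{4,6}$.

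Next I branch the case analysis. When $\ann(\E)=\langle e_5\rangle$, the squares $e_1^2,\dots,e_4^2$ lie in $\langle e_2,e_3,e_4,e_5\rangle$; inspection of the support pattern under power-associativity gives either (i) each $e_i^2$ is a scalar multiple of $e_5$ (producing $N_{5,8}(\al,\be,\g)$ after rescaling to make the coefficient of $e_1^2$ equal to $1$), or (ii) exactly one square, say $e_1^2$, has support $e_2+e_3$ forcing $e_2^2=-e_3^2\neq 0$ and $e_2^2,e_3^2\in\langle e_5\rangle$; the remaining generator $e_4$ then has $e_4^2\in\langle e_2+e_3,e_5\rangle$, yielding in turn $N_{5,10}(\al)$, $N_{5,11}(\al)$, and $N_{5,12}(\al,\be)$ according to whether $e_4^2$ is $\propto e_5$, $\propto e_2+e_3$, or a genuine combination of the two. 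When $\dim_K(\ann(\E))=2$, only three generators are left, and essentially the same support analysis (combined with the indecomposability criterion forbidding a decomposition into a pair of $N_{2,2}$-type ideals plus an annihilator summand) isolates $N_{5,9}(\al,\be)$.

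The last step is normalization and rigidity. For each family I would rescale each $e_i\mapsto\la_i e_i$ with $\la_i\in K^*$ to reduce the number of free parameters to those displayed in Table 2, and verify indecomposability by showing the ``support graph'' (with an edge from $i$ to $k$ whenever $a_{ik}\neq 0$) is connected on the non-annihilator indices, so no nontrivial partition of $B$ yields a direct sum of ideals. The hard part will be the non-redundancy: showing that no two algebras in Table 2 are isomorphic and that the parameters $(\al,\be,\g)$, $(\al,\be)$, $\al$ are genuine moduli. This requires tracking isomorphism invariants such as $\dim_K(\ann(\E))$, $\dim_K(\E^2)$, associativity versus non-associativity, and finer invariants given by rank and similarity type of $M$ modulo the diagonal-conjugation action $M\mapsto D^{-2}MD$ with $D=\mathrm{diag}(\la_1,\dots,\la_5)$ (the only change-of-natural-basis available up to a permutation). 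Routine but tedious checks on these invariants separate the five families.
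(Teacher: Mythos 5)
There is nothing in this paper to compare your argument with: the statement is imported verbatim from \cite{Ouatt2018} (Table~2) and the present paper gives no proof of it, so your proposal has to stand on its own as a complete classification proof. As written it is a plan rather than a proof, and the substantive steps are asserted instead of carried out.

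Concretely: (1) your reduction of the supports to $\langle e_2,\dots,e_5\rangle$ is not justified by the remark you give --- nilpotency of $e_i$ only forbids a nonzero coefficient of $e_i$ in $e_i^2$ (i.e.\ $a_{ii}\neq 0$), not a coefficient of $e_1$ in $e_j^2$ for $j\neq 1$; the triangular normal form needs its own argument. (2) The identity $x^2x^2=x^4$ is invoked but its consequences for the matrix $M$ are never actually derived; yet the dichotomy you state in the case $\ann(\E)=Ke_5$ (either all squares in $Ke_5$, or $e_1^2=e_2+e_3$ with $e_2^2=-e_3^2\in Ke_5$ and $e_4^2\in\langle e_2+e_3,e_5\rangle$) \emph{is} the whole content of the classification; you must show why no other support pattern (supports of size $\geq 3$, several independent non-annihilator supports, etc.) is compatible with power-associativity and indecomposability. (3) In the case $\dim(\ann(\E))=2$ you claim the same analysis ``isolates $N_{5,9}(\al,\be)$''; but non-associative power-associative nilalgebras with two-dimensional annihilator do exist one dimension higher ($N_{6,25}(\al)$, $N_{6,26}$ in Table~3), so excluding them in dimension $5$ is genuine work, not a formality. (4) The ``one and only one'' half --- mutual non-isomorphism of the five families and the behaviour of the parameters $(\al,\be,\g)$ under the admissible changes of natural basis (rescalings $e_i\mapsto\la_ie_i$, which multiply the constants by squares of units, and permutations) --- is exactly the delicate part, and you defer it entirely as ``routine but tedious''. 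Until points (1)--(4) are actually carried out, the proposal is an outline of a plausible strategy, not a proof of the Proposition.
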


\begin{prop}[\cite{Ouatt2018}, Table~$3$]
Soit $\E$ une nil-alg\`ebre d'\'evolution ind\'{e}composable, \`a puissances associatives, de dimension $6$. Alors $\E$ est isomorphe \`a une et une seule des alg\`ebres dans la Table~$3$.
%\begin{turn}[90]
%\begin{sidewaystable}
 {\footnotesize
 %\small
 \begin{longtable}[c]{|c|p{7cm}|c|c|}
 \caption*{\underline{\textbf{Table 3:}} $\dim(\E)=6$}\\
  \hline
   % after \\: \hline or \cline{col1-col2} \cline{col3-col4} ...
    {$\E$} &   {\small Multiplication} &   {\small dim(ann($\E$))} & {\footnotesize Associative}  \\\hline
\endhead \hline
\endfoot{$N_{6,16}(\alpha, \beta, \gamma, \delta)$} &   {$e_{1}^{2} = e_{6}, e_{2}^{2} = \alpha e_{6},$ $e_{3}^{2} = \beta e_{6},$ $e_{4}^{2} = \gamma e_{6},$ $e_{5}^{2} = \delta e_{6}$, $e_{6}^{2} = 0$ avec $\alpha, \beta, \gamma, \delta \in K^{*}$} &   {$1$} &   {\small Oui}  \\\hline
{$N_{6,17}(\alpha, \beta, \gamma)$} &   {$e_{1}^{2} = e_{5}$, $e_{2}^{2} = \alpha e_{5} +  \beta e_{6}$, $e_{3}^{2} =  \gamma e_{6}$, $e_{4}^{2} = e_{6}$, $e_{5}^{2} = e_{6}^{2} = 0$ avec $\alpha\be\gamma\neq 0$}  &   {$2$} & {\small Oui}  \\\hline
{$N_{6,18}(\alpha, \beta, \gamma,\delta)$} &   {$e_{1}^{2} = e_{5}$, $e_{2}^{2} = \al e_{5} +  \be e_{6}$, $e_{3}^{2} = \gamma e_{5}+\de e_6$, $e_{4}^{2} = e_{6}$, $e_{5}^{2} = e_{6}^{2} = 0$ avec $\al\be\ne0$, $\g\de\ne 0$ et $\alpha\delta-\beta\gamma \neq 0$} &   {$2$} &   {\small Oui}  \\\hline
{$N_{6,19}(\alpha, \beta)$} &   {$e_{1}^{2} = e_{2} + e_{3},$ $e_{2}^{2} = e_{6},$ $e_{3}^{2} = -e_{6},${} $e_{4}^{2} = \alpha e_{6},$  $e_{5}^{2} = \beta e_{6},$ $e_{6}^{2} = 0$ avec $\alpha, \beta \in K^{*}$} &   {$1$} & {\small Non} \\\hline
  {$N_{6,20}(\alpha, \beta)$} &   {$e_{1}^{2} = e_{2} + e_{3},$ $e_{2}^{2} = e_{6},$ $e_{3}^{2} = -e_{6},${} $e_{4}^{2} = \alpha(e_{2} + e_{3}),$  $e_{5}^{2} = \beta e_{6},$ $e_{6}^{2} = 0$ avec $\alpha, \beta \in K^{*}$} &   {$1$} & {\small Non}  \\\hline
  {$N_{6,21}(\alpha, \beta, \gamma)$} &   {$e_{1}^{2} = e_{2} + e_{3},$ $e_{2}^{2} = e_{6},$ $e_{3}^{2} = -e_{6},$ $e_{4}^{2} = \alpha(e_{2} + e_{3}) + \beta e_{6},$ $e_{5}^{2} = \gamma e_{6},$ $e_{6}^{2} = 0$ avec $\alpha, \beta,\gamma \in K^{*}$} &   {$1$} &   {\small Non } \\\hline
  {$N_{6,22}(\alpha, \beta)$} &   {$e_{1}^{2} = e_{2} + e_{3},$ $e_{2}^{2} = e_{6},$ $e_{3}^{2} = -e_{6},$ $e_{4}^{2} = \alpha(e_{2} + e_{3}),$  $e_{5}^{2} = \beta(e_{2} + e_{3}),$ $e_{6}^{2} = 0$ avec $\alpha, \beta \in K^{*}$} &   {$1$} &   {\small Non}  \\\hline
  {$N_{6,23}(\alpha, \beta, \gamma, \delta)$} &   {$e_{1}^{2} = e_{2} + e_{3},$ $e_{2}^{2} = e_{6},$  $e_{3}^{2} = -e_{6},$ \newline $e_{4}^{2} = \alpha(e_{2} + e_{3}) + \beta e_{6},$ $e_{5}^{2} = \gamma(e_{2} + e_{3}) + \delta e_{6},$ $e_{6}^{2} = 0$ avec $\alpha\delta-\beta\gamma \neq 0$, $\al\g\ne0$ et $\beta\delta \neq 0$} &   {$1$} &   {\small Non}  \\\hline
  {$N_{6,24}(\alpha, \beta, \gamma)$} &   {$e_{1}^{2} = e_{2} + e_{3},$ $e_{2}^{2} = e_{6},$  $e_{3}^{2} = -e_{6},${} \newline $e_{4}^{2} = \alpha(e_{2} + e_{3}) + \beta e_{6},$ $e_{5}^{2} = \gamma(e_{2} + e_{3}),$ $e_{6}^{2} = 0$ avec $\al\beta\gamma \neq 0$} &   {$1$} &   {\small Non}  \\\hline
  {$N_{6,25}(\alpha)$} &   {$e_{1}^{2} = e_{2} + e_{3},$ $e_{2}^{2} = e_{5},$ $e_{3}^{2} = -e_{5},${} $e_{4}^{2} = \alpha(e_{2} + e_{3}) + e_{6},$  $e_{5}^{2} = e_{6}^{2} = 0$ avec $\alpha \in K^{*}$} &   {$2$} &   {\small Non}  \\\hline
  {$N_{6,26}$} &   {$e_{1}^{2} = e_{2} + e_{3} + e_{4},$ $e_{2}^{2} = e_{5},$  $e_{3}^{2} = e_{6},${} $e_{4}^{2} = -(e_{5} + e_{6}),$  $e_{5}^{2} = e_{6}^{2} = 0$} &   {$2$} &   {\small Non}  \\
\hline
\end{longtable}
}
%\end{turn}
%\end{sidewaystable}
\end{prop}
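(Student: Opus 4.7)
Mon plan est de partir de la forme g\'en\'erale d'une alg\`ebre d'\'evolution $\E$ de dimension $6$ dans une base naturelle $B=\{e_1,\ldots,e_6\}$ avec $e_i^2=\sum_{k=1}^6 a_{ik}e_k$, puis de traduire la condition de puissances associatives via l'identit\'e $x^2x^2=x^4$, valable en caract\'eristique $\neq 2,3,5$ d'apr\`es le th\'eor\`eme d'Albert rappel\'e plus haut. En d\'eveloppant cette identit\'e pour $x=\sum \lambda_i e_i$ et en identifiant les coefficients multilin\'eaires en les $\lambda_i$, on obtient un syst\`eme d'\'equations quadratiques portant sur les $a_{ik}$, qui sera la source de toutes les relations structurelles. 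La condition de nil se traduit de son c\^ot\'e en termes spectraux sur la matrice $M=(a_{ik})$, forçant en particulier que $M$ soit nilpotente le long de la diagonale apr\`es r\'eordonnancement convenable.

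Ensuite, j'utiliserais le lemme de \cite{Elduque2016} rappel\'e ci-dessus~: comme $\E$ est ind\'ecomposable de dimension $6$, on a $\dim\ann(\E)\leq 2$ (sinon $\dim\ann(\E)\geq\frac12\dim\E$ forcerait la d\'ecomposabilit\'e). Le cas $\dim\ann(\E)=0$ est exclu puisque toute nil-alg\`ebre d'\'evolution poss\`ede un annulateur non trivial. Je traiterais donc s\'epar\'ement les cas $\dim\ann(\E)=1$ et $\dim\ann(\E)=2$. Dans chaque cas j'appliquerais les changements de base naturelle admissibles (permutations des $e_i$, remises \`a l'\'echelle scalaires et, lorsque plusieurs $e_i^2$ sont li\'es, certaines combinaisons contr\^ol\'ees) pour mettre $M$ sous forme normale, en prenant pour mod\`ele le traitement d\'ej\`a effectu\'e en dimensions $\leq 5$ (propositions pr\'ec\'edentes). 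L'\'equation $x^2x^2=x^4$ appliqu\'ee aux vecteurs test $x=e_i+\mu e_j$ produit les relations cl\'es qui forcent soit la colin\'earit\'e de certains $e_i^2$ (conduisant aux familles associatives $N_{6,16}$, $N_{6,17}$, $N_{6,18}$), soit les relations du type $e_i^2=e_j+e_k$ avec $e_j^2=-e_k^2$ caract\'eristiques des familles non associatives $N_{6,19}$ \`a $N_{6,26}$.

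L'obstacle principal sera de \emph{distinguer} les onze familles list\'ees modulo isomorphisme, en particulier lorsque plusieurs d'entre elles partagent les m\^emes invariants \'el\'ementaires (dimension de l'annulateur, caract\`ere associatif, rang de $M$). Pour cela il faudra mobiliser des invariants plus fins~: dimensions de $\E^2$, $\E^3$, $\E\cdot\ann(\E)$, structure de l'image de l'application quadratique $x\mapsto x^2$, ou encore caract\'erisation des \'el\'ements $x$ tels que $x^2\in\ann(\E)$. Les param\`etres $(\alpha,\beta,\gamma,\delta)$ apparaissant dans chaque famille correspondent \`a des orbites r\'esiduelles sous le stabilisateur de la forme normale dans le groupe des changements de base naturelle~; leur non-redondance devra \^etre v\'erifi\'ee en explicitant cette action r\'esiduelle et en montrant qu'elle laisse les param\`etres libres dans $K^*$, quitte \`a imposer les contraintes $\alpha\delta-\beta\gamma\neq 0$ (pour $N_{6,18}$ et $N_{6,23}$) qui assurent pr\'ecis\'ement l'ind\'ecomposabilit\'e. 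C'est cette derni\`ere v\'erification, combin\'ee \`a l'exhaustivit\'e de l'\'enum\'eration des formes normales, qui sera la partie la plus longue de la d\'emonstration.
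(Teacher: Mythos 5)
Le texte ne d\'emontre pas cette proposition~: elle est reprise telle quelle de \cite{Ouatt2018} (Table~3), o\`u la classification est \'etablie ; il n'y a donc pas de preuve interne \`a comparer. Votre plan g\'en\'eral (identit\'e $x^2x^2=x^4$ d'Albert pour traduire les puissances associatives, borne $\dim\ann(\E)\leq 2$ via le lemme d'Elduque--Labra pour une alg\`ebre ind\'ecomposable de dimension $6$, puis mise sous forme normale de la matrice des constantes de structure) va dans la bonne direction et correspond vraisemblablement \`a la d\'emarche de la r\'ef\'erence.

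Cependant, en l'\'etat, votre proposition n'est qu'une esquisse et comporte une lacune essentielle~: tout le contenu de l'\'enonc\'e --- l'\'enum\'eration effective des onze familles, la v\'erification que les contraintes sur les param\`etres ($\al\be\g\ne0$, $\al\de-\be\g\ne0$, $\be\de\ne0$, etc.) sont exactement celles qui garantissent l'ind\'ecomposabilit\'e, la nilit\'e et les puissances associatives, et surtout la preuve du \og une et une seule \fg{} (non-isomorphisme deux \`a deux, non-redondance des param\`etres sous l'action r\'esiduelle du groupe des changements de base naturelle) --- est simplement annonc\'e comme \og la partie la plus longue \fg{} sans \^etre ex\'ecut\'e. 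Une classification ne se r\'eduit pas \`a son sch\'ema de preuve~: sans le syst\`eme explicite d'\'equations issues de $x^2x^2=x^4$ et sans la discussion compl\`ete des cas $\dim\ann(\E)=1$ et $\dim\ann(\E)=2$, rien ne garantit l'exhaustivit\'e de la liste ni l'exactitude des conditions impos\'ees aux param\`etres. Deux points secondaires demandent aussi une justification~: l'affirmation qu'une nil-alg\`ebre d'\'evolution de dimension finie a un annulateur non trivial (elle d\'ecoule de l'\'equivalence nil $\Leftrightarrow$ nilpotente pour les alg\`ebres d'\'evolution de dimension finie, \`a citer explicitement), et l'interpr\'etation de $\al\de-\be\g\ne0$ comme condition d'ind\'ecomposabilit\'e, alors qu'elle sert plut\^ot \`a s\'eparer la famille consid\'er\'ee des autres formes normales (par exemple $N_{6,18}$ de $N_{6,17}$).
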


Soient $N$ une nil-alg\`{e}bre d'\'{e}volution \`{a} puissances associatives, ind\'{e}composable, de base naturelle $B = \{e_{i}$ ; $1 \leq i \leq n\}$ et $d : N \longrightarrow N$ une d\'{e}rivation. On pose $d(e_{i}) = \sum_{j = 1}^{n}d_{ji}e_{j}$, pour tout $1 \leq i \leq n$ et on s'int\'{e}resse aux d\'{e}rivations des nil-alg\`{e}bres d'\'{e}volution ind\'{e}composable de dimension au plus $6$ ; ainsi, $\dim(ann(N)) = 1$ ou~$2$.

\section{Nil-alg\`{e}bres associatives et ind\'{e}composables}
On suppose ici que $N$ est associative.

\subsection{Dimension de l'annulateur de $N$ est $1$}
On pose $ann(N) = Ke_{n}$, alors $e_{i}^{2} = \alpha_{i}e_{n}$, $e_{n}^{2} = 0$ avec $\alpha_{i} \neq 0$ $(1 \leq i \leq n - 1)$ (\cite[\S~4.1]{Ouatt2018}).

Soit $gl(n, K) = M_{n}(K) = (e_{ji})_{1 \leq i, j \leq n}$ avec $e_{ji}e_{lk} = \delta_{il}e_{jk}$. On pose $H = <h_{ji} = e_{ji} - \alpha_{i}^{-1}\alpha_{j}e_{ij}$ ; $1 \leq i < j \leq n - 1>$, $L = <e_{nj}$ ; $1 \leq j \leq n - 1>$ et $g = e_{11} + \cdots + e_{n - 1, n - 1} + 2e_{nn}$.

\begin{theo} Soit $N$ une nil-alg\`{e}bre d'\'{e}volution ind\'{e}composable associative, de dimension $n \geq 3$, telle que $\dim(ann(N)) = 1$. Alors $\D(N) = H\oplus L\oplus Kg$ et $\D(N) \simeq K^{\frac{n(n - 1)}{2}}\times K$.
\end{theo}

\begin{proof} Pour $i = 1, \ldots, n - 1$, on a $d(e_{i}^{2}) = 2e_{i}d(e_{i}) = 2d_{ii}e_{i}^{2} = 2\alpha_{i}d_{ii}e_{n}$ et $d(e_{i}^{2}) = \alpha_{i}d(e_{n})$ alors $d(e_{n}) = 2d_{ii}e_{n}$ car $\alpha_{i} \neq 0$. En particulier $d(e_{n}) = 2d_{11}e_{n}$, d'o\`{u} $d_{ii} = d_{11}$ pour $i \in \{1, \ldots, n - 1\}$.

Nous avons aussi : $0 = d(e_{i}e_{j}) =  d_{ij}e_{i}^{2} + d_{ji}e_{j}^{2} =  (d_{ij}\alpha_{i} + d_{ji}\alpha_{j})e_{n}$, donc $d_{ij}\alpha_{i} + d_{ji}\alpha_{j} = 0$, soit $d_{ji} = -\alpha_{i}\alpha_{j}^{-1}d_{ij}$ pour $1 \leq i \neq j \leq n - 1$.

Alors
$Mat_{B}(d) = \left(
                            \begin{array}{cccccc}
                                  d_{11} & -\alpha_{1}^{-1}\alpha_{2}d_{21} & -\alpha_{1}^{-1}\alpha_{3}d_{31} & \cdots & -\alpha_{1}^{-1}\alpha_{n -1}d_{n - 1, 1} & 0 \\
                                  d_{21} & d_{11} & -\alpha_{2}^{-1}\alpha_{3}d_{32} & \cdots & -\alpha_{2}^{-1}\alpha_{n -1}d_{n - 1, 2} & 0 \\
                                  d_{31} & d_{32} & d_{11} & \cdots & -\alpha_{3}^{-1}\alpha_{n -1}d_{n - 1, 3} & 0 \\
                                  \vdots & \vdots & \vdots & \ddots & \vdots & \vdots \\
                                  d_{n - 1, 1} & d_{n - 1, 2} & d_{n - 1, 3} & \cdots & d_{11} & 0 \\
                                  d_{n1} & d_{n2} & d_{n3} & \cdots & d_{n, n - 1} & 2d_{11} \\
                                \end{array}
                              \right)$

Donc $Mat_{B}(d) = \sum_{1 \leq i < j \leq n - 1}d_{ji}h_{ji} + \sum_{j = 1}^{n - 1}d_{nj}e_{nj} + d_{11}g$, d'o\`{u} $\D(N) = H\oplus L\oplus Kg$.
On a $\dim(H) = \frac{(n - 2)(n - 1)}{2}$, $\dim(L) = n - 1$ et $\dim(Kg) = 1$. Ainsi $\D(N) \simeq K^{\frac{n(n - 1)}{2}}\times K$.
\end{proof}
\begin{prop} Soit $N$ une nil-alg\`{e}bre d'\'{e}volution ind\'{e}composable associative de dimension $n \geq 3$ telle que $\dim(ann(N)) = 1$. Alors $\D(N)' = H\oplus L$ et $\D(N)'~\simeq~K^{\frac{n(n - 1)}{2}}$.
\end{prop}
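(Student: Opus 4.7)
The plan is to exploit the matrix realization $\D(N)\subseteq gl(n,K)$ from the previous theorem, and to use the ordinary matrix trace as a linear functional detecting the coefficient of $g$. Indeed each $h_{ji}$ and each $e_{nj}$ has zero diagonal, hence is traceless, whereas $\textrm{tr}(g)=(n-1)+2=n+1$. In the canonical decomposition $d=\sum a_{ji}h_{ji}+\sum b_j e_{nj}+c g$ of an arbitrary derivation, the scalar $c$ is therefore recovered (up to the factor $n+1$) from $\textrm{tr}(d)$.

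For the inclusion $\D(N)'\subseteq H\oplus L$: since $\D(N)$ sits inside the associative algebra $gl(n,K)$, its Lie bracket is the matrix commutator, whose trace is identically zero. Consequently every element of $\D(N)'$ is traceless, which (assuming $\textrm{char}(K)\nmid n+1$) forces $c=0$ and thus places $d$ in $H\oplus L$.

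For the reverse inclusion $H\oplus L\subseteq \D(N)'$: the $L$-part is immediate from $[g,e_{nj}]=e_{nj}$, obtained directly from $ge_{nj}=2e_{nj}$ and $e_{nj}g=e_{nj}$. For the $H$-part, I would compute brackets of pairs $h_{ji},h_{lk}$ using the matrix-unit rule $e_{ab}e_{cd}=\delta_{bc}e_{ad}$ and then re-express the result in the $h$-basis. Three nontrivial families suffice, namely $[h_{ji},h_{lj}]=-h_{li}$ for $i<j<l$, $[h_{ji},h_{jk}]=\alpha_k^{-1}\alpha_j\, h_{ki}$ for $i<k<j$, and $[h_{ji},h_{li}]=\alpha_i^{-1}\alpha_j\, h_{lj}$ for $i<j<l$. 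A short combinatorial check then shows that, for $n\ge 4$, every $h_{pq}$ with $1\le q<p\le n-1$ appears in the image of at least one of these three formulas.

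The main obstacle is precisely this combinatorial bookkeeping: one must organise the three families above to cover every admissible pair $(q,p)$, the most delicate being the ``adjacent'' cases $h_{p+1,p}$ since they require a third free index, which is available only when $n\ge 4$. The argument also relies on the mild hypothesis that $\textrm{char}(K)$ does not divide $n+1$, needed for the trace functional to be injective on $Kg$. Finally, the small case $n=3$ is exceptional: there $H$ is one-dimensional and every bracket of the four generators lands in $L$, so $\D(N)'=L$ has dimension $n-1=2$ rather than $n(n-1)/2=3$, and would deserve a separate remark.
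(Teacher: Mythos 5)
Your proof takes a genuinely different route from the paper's and, for $n\ge 4$, it works. The paper proceeds by direct computation: it evaluates all six brackets $[H,H]$, $[H,L]$, $[H,Kg]$, $[L,L]$, $[L,Kg]$, $[Kg,Kg]$, obtains $\D(N)'\subseteq H\oplus L$ from these closure relations, and gets equality by a dimension count on explicit generators. You replace the upper bound by a trace argument and the lower bound by your three bracket families, and your covering check is right: $h_{pq}$ (with $q<p\le n-1$) is reached by the first family unless $p=q+1$, by the second unless $p=n-1$, by the third unless $q=1$, and these three obstructions occur simultaneously only when $n=3$. Two remarks. First, the extra hypothesis $\mathrm{char}(K)\nmid n+1$ is avoidable: instead of the full trace, evaluate the $(n,n)$ entry. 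In the matrix form of a derivation the last column is concentrated in its last entry, so for $A,B\in\D(N)$ one has $[A,B]_{nn}=A_{nn}B_{nn}-B_{nn}A_{nn}=0$, while the $g$-component of an element of $\D(N)$ is half its $(n,n)$ entry; char $\neq 2$ suffices, so no condition on $n+1$ is needed (the paper's explicit computation of $[H,Kg]=0$, $[L,Kg]=L$, etc., likewise needs none). Second, your observation about $n=3$ is correct and in fact uncovers a flaw in the paper: its proof that $[H,H]=H$ invokes the generator $[h_{21},h_{n-1,2}]=-h_{n-1,1}$, which is meaningless when $n=3$ (it would be $[h_{21},h_{22}]$); for $N_{3,3}(\alpha)$ one has $\dim H=1$, hence $[H,H]=0$, every bracket of the generators $h_{21}$, $e_{31}$, $e_{32}$, $g$ lies in $L$, and therefore $\D(N)'=L\simeq K^{2}$, not $K^{3}$. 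So the proposition is accurate only for $n\ge 4$, exactly as your closing remark indicates; what your approach buys is precisely this visibility of the small case, at the cost of a characteristic caveat that the $(n,n)$-entry trick removes.
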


\begin{proof} $\bullet$ Calculons $[H, H]$. Soient $1 \leq i < j \leq n - 1$ et $1 \leq k < l \leq n - 1,$ on a
\begin{eqnarray*}
[h_{ji}, h_{lk}] & = &  (e_{ji} - \alpha_{i}^{-1}\alpha_{j}e_{ij})(e_{lk} - \alpha_{k}^{-1}\alpha_{l}e_{kl}) - (e_{lk} - \alpha_{k}^{-1}\alpha_{l}e_{kl})(e_{ji} - \alpha_{i}^{-1}\alpha_{j}e_{ij}) \\
 & = & (\delta_{il}e_{jk} - \alpha_{k}^{-1}\alpha_{l}\delta_{ik}e_{jl} - \alpha_{i}^{-1}\alpha_{j}\delta_{jl}e_{ik} + \alpha_{i}^{-1}\alpha_{j}\alpha_{k}^{-1}\alpha_{l}\delta_{jk}e_{il})\nonumber\\
& & {} - (\delta_{kj}e_{li} - \alpha_{i}^{-1}\alpha_{j}\delta_{ki}e_{lj} - \alpha_{k}^{-1}\alpha_{l}\delta_{lj}e_{ki} + \alpha_{k}^{-1}\alpha_{j}\alpha_{i}^{-1}\alpha_{l}\delta_{li}e_{kj})
\end{eqnarray*}

$1)$ $j = k$, on a $[h_{ji}, h_{lj}] = -(e_{li} - \alpha_{i}^{-1}\alpha_{l}e_{il}) = -h_{li}$.

$2)$ $j = l$ et $i < k$, on a $[h_{ji}, h_{jk}] = \alpha_{k}^{-1}\alpha_{j}(e_{ki} - \alpha_{i}^{-1}\alpha_{k}e_{ik}) = \alpha_{k}^{-1}\alpha_{j}h_{ki}$.

$3)$ $j = l$ et $k < i$, on a $[h_{ji}, h_{jk}] = -\alpha_{i}^{-1}\alpha_{j}(e_{ik} - \alpha_{k}^{-1}\alpha_{i}e_{ki}) = -\alpha_{i}^{-1}\alpha_{j}h_{ik}$.

$4)$ $i = l$, on a $[h_{ji}, h_{ik}] = (e_{jk} - \alpha_{k}^{-1}\alpha_{j}e_{kj}) = h_{jk}$.

$5)$ $i = k$ et $j < l$, on a $[h_{ji}, h_{li}] = \alpha_{i}^{-1}\alpha_{j}(e_{lj} - \alpha_{j}^{-1}\alpha_{l}e_{jl}) = \alpha_{i}^{-1}\alpha_{j}h_{lj}$.

$6)$ $i = k$ et $l < j$, on a $[h_{ji}, h_{li}] = -\alpha_{i}^{-1}\alpha_{l}(e_{jl} - \alpha_{l}^{-1}\alpha_{j}e_{lj}) = -\alpha_{i}^{-1}\alpha_{l}h_{jl}$.

$7)$ $i, j, k, l$ sont deux \`{a} deux distincts, on a $[h_{ji}, h_{lk}] = 0$.

On en d\'{e}duit que $[H, H] \subseteq H.$ Posons $[H, H]_{1} = <[h_{j1}, h_{l1}] = \alpha_{1}^{-1}\alpha_{j}h_{lj}$ ; $1 < j < l \leq n - 1>$ et $[H, H]_{2} = <[h_{n - 1, 1}, h_{n - 1, k}] = \alpha_{k}^{-1}\alpha_{n - 1}h_{k1}$ ; $2 \leq k \leq n - 2>$. On a $\dim([H, H]_{1}) = (n -~3) + (n - 4) + \ldots + 1 = \frac{(n - 3)(n - 2)}{2}$, $\dim([H, H]_{2}) = n - 3$ et $[h_{21}, h_{n - 1, 2}] = -h_{n - 1, 1}$. Comme $[H, H]_{1}\oplus[H, H]_{2}\oplus K[h_{21}, h_{n - 1, 2}] \subseteq [H, H]$ et que $\dim([H, H]_{1}\oplus[H, H]_{2}\oplus K[h_{21}, h_{n - 1, 2}]) = \frac{(n - 3)(n - 2)}{2} + (n - 3) + 1 = \frac{(n - 1)(n - 2)}{2} = \dim(H)$ alors $[H, H] = H.$

$\bullet$ Calculons $[H, L]$. Soient $1 \leq i < j \leq n - 1$ et $1 \leq k \leq n - 1$. On a\\
$[h_{ji}, e_{nk}] = (e_{ji} -~\alpha_{i}^{-1}\alpha_{j}e_{ij})e_{nk} - e_{nk}(e_{ji} - \alpha_{i}^{-1}\alpha_{j}e_{ij}) = -\delta_{kj}e_{ni} + \alpha_{i}^{-1}\alpha_{j}\delta_{ki}e_{nj}$.

$1)$ $k = i$ entra\^{\i}ne $[h_{ji}, e_{ni}] = \alpha_{i}^{-1}\alpha_{j}e_{nj}$.

$2)$ $k = j$ entra\^{\i}ne  $[h_{ji}, e_{nj}] = -e_{ni}$.

$3)$ $k \neq i$ et $k \neq j$ entra\^{\i}nent $[h_{ji}, e_{nk}] = 0$.

On en d\'{e}duit que $[H, L] \subseteq L.$ Posons $[H, L]_{1} = <[h_{n -1, i}, e_{n, n -1}] = -e_{ni}$ ; $1 \leq i \leq n - 2>$~; on a $[h_{n - 1, 1}, e_{n1}] = \alpha_{1}^{-1}\alpha_{n - 1}e_{n, n - 1}$. Comme $[H, L]_{1}\oplus K[h_{n - 1, 1}, e_{n1}] \subseteq [H, L]$ et $\dim([H, L]_{1}\oplus K[h_{n - 1, 1}, e_{n1}]) = (n - 2) + 1 = n - 1 = \dim(L)$, alors $[H, L] = L$.

$\bullet$ Calculons $[H, Kg]$. Soient $1 \leq i < j \leq n - 1$. On a $[h_{ji}, g] = \sum_{k = 1}^{n - 1}[h_{ji}, e_{kk}] + 2[h_{ji}, e_{nn}] = \sum_{k = 1}^{n - 1}((e_{ji} - \alpha_{i}^{-1}\alpha_{j}e_{ij})e_{kk} - e_{kk}(e_{ji} - \alpha_{i}^{-1}\alpha_{j}e_{ij})) = (e_{ji} - \alpha_{i}^{-1}\alpha_{j}e_{ij}) - (e_{ji} - \alpha_{i}^{-1}\alpha_{j}e_{ij})$. On en d\'{e}duit que $[H, Kg] = 0$.

$\bullet$ Calculons $[L, L]$. Soient $1 \leq i, j \leq n - 1$, on a $[e_{ni}, e_{nj}] = e_{ni}e_{nj} - e_{nj}e_{ni} = \delta_{in}e_{nj} - \delta_{jn}e_{ni} = 0$. Donc $[L, L] = 0$.

$\bullet$ Calculons $[L, Kg]$. Soient $1 \leq i \leq n - 1$, on a $[e_{ni}, g] = \sum_{j = 1}^{n}e_{ni}e_{jj} - \sum_{j = 1}^{n}e_{jj}e_{ni} = e_{ni}$. Donc $[L, Kg] = L$

$\bullet$ On a $[Kg, Kg] = 0$.

On en d\'{e}duit que  $\D(N)' = H\oplus L$ et $\D(N)'~\simeq~K^{\frac{n(n - 1)}{2}}$.
\end{proof}

\subsection{Dimension de l'annulateur de $N$ est $2$}

\begin{theo} Soit $N$ une nil-alg\`{e}bre d'\'{e}volution ind\'{e}composable, associative, de dimension $\leq 6$ telle que $\dim(ann(N)) = 2$. Alors, les d\'{e}rivations dans $N$ sont donn\'ees dans la Table $4$.

 \begin{longtable}[c]{|p{2,5cm}|p{9cm}|c|}
 \caption*{\underline{\textbf{Table 4}} }\\
  \hline
   % after \\: \hline or \cline{col1-col2} \cline{col3-col4} ...
    {$N$} &   {\small D\'{e}finition de la d\'{e}rivation} &   {\small $\D(N)\simeq$}   \\\hline
\endfirsthead
 %\caption*[]{}
 \hline   {$N$} &   {\small D\'{e}rivation} &   {\small $\D(N)\simeq$}   \\\hline
\endhead \hline
\endfoot
{$N_{5,9}(\alpha, \beta)$} & {$d(e_{i}) = d_{11}e_{i} + d_{4i}e_{4} + d_{5i}e_{5}$, $d(e_{4}) = 2d_{11}e_{4}$, $d(e_{5}) = 2d_{11}e_{5}$ avec $i = 1, 2, 3$} & {$K^{4}\times K^3$}   \\\hline
{$N_{6,17}(\alpha, \beta, \gamma)$} & {$d(e_{i}) = d_{11}e_{i} + d_{5i}e_{5} + d_{6i}e_{6}$, $d(e_{3}) = d_{11}e_{3} + d_{43}e_{4} + d_{53}e_{5} + d_{63}e_{6}$, $d(e_{4}) = -\gamma^{-1}d_{43}e_{3} + d_{11}e_{4} + d_{54}e_{5} + d_{64}e_{6}$, $d(e_{5}) = 2d_{11}e_{5}$, $d(e_{6}) = 2d_{11}e_{6}$ avec $i = 1, 2$} & $K^{6}\times K^4$ \\\hline
{$N_{6,18}(\alpha, \beta, \gamma, \delta)$} & {$d(e_{i}) = d_{11}e_{i} + d_{5i}e_{5} + d_{6i}e_{6}$, $d(e_{5}) = 2d_{11}e_{5}$, $d(e_{6}) = 2d_{11}e_{6}$ avec $i = 1, 2, 3, 4$} & $K^{5}\times K^4$ \\\hline
\end{longtable}
\end{theo}

\begin{proof}
Dans \cite{Ouatt2018}, les Auteurs montrent que $N$ admet une base naturelle $B = \{e_{i}$ ; $1 \leq i \leq n\}$ dont la table de multiplication est d\'{e}finie par : $e_{1}^{2} = e_{n - 1}$, $e_{i}^{2} = \alpha_{i, n - 1}e_{n - 1} + \alpha_{i, n}e_{n}$, $e_{n - 2}^{2} = e_{n}$, $e_{n - 1}^{2} = e_{n}^{2} = 0$ avec $(\alpha_{i, n -1}, \alpha_{i, n}) \neq 0$ o\`{u} $2  \leq i \leq n - 3$. Il existe \'egalement un $i_{0} \in \{2, \ldots, n - 3\}$ tel que $\alpha_{i_{0}, n -1}\alpha_{i_{0}, n} \neq 0$. En d\'{e}rivant $e_{n - 1} = e_{1}^{2}$, on obtient $d(e_{n - 1}) = d(e_{1}^{2}) = 2e_{1}d(e_{1}) = 2d_{11}e_{1}^{2} = 2d_{11}e_{n - 1}$. De m\^{e}me $d(e_{n}) = 2d_{n -2, n -2}e_{n}$ en d\'{e}rivant $e_{n} = e_{n - 2}^{2}$. Pour $i = 2, \ldots, n - 3$, $d(e_{i}^{2}) = 2e_{i}d(e_{i}) = 2d_{ii}e_{i}^{2} = 2d_{ii}(\alpha_{i, n - 1}e_{n - 1} + \alpha_{i, n}e_{n})$ et la d\'{e}rivation de $\alpha_{i, n - 1}e_{n - 1} + \alpha_{i, n}e_{n}$ donne $\alpha_{i, n - 1}d(e_{n - 1}) + \alpha_{i, n}d(e_{n}) = 2(\alpha_{i, n - 1}d_{11}e_{n - 1} + \alpha_{i, n}d_{n - 2, n - 2}e_{n})$. Comme $\alpha_{i, n - 1}e_{n - 1} + \alpha_{i, n}e_{n} = e_{i}^{2}$, alors $d_{ii}\alpha_{i, n - 1} = \alpha_{i, n - 1}d_{11} \textrm{ et }d_{ii}\alpha_{i, n} = \alpha_{i, n}d_{n - 2, n - 2}$. Pour $i = i_{0}$, on a $d_{11} = d_{i_{0}i_{0}} = d_{n - 2, n - 2}$ car $\alpha_{i_{0}, n -1}\alpha_{i_{0}, n} \neq 0$ et pour $i \neq i_{0}$, on a $d_{ii} = d_{11}$ car $(\alpha_{i, n -1}, \alpha_{i, n}) \neq 0$. On en d\'{e}duit que
\begin{eqnarray}\label{Eq5}
d(e_{n - 1}) = 2d_{11}e_{n - 1},\textrm{ }d(e_{n}) = 2d_{11}e_{n}\textrm{ et }d_{ii} = d_{11}\textrm{ avec }  1 \leq i \leq n - 2.
\end{eqnarray}
Les r\'{e}lations $(a)$, $(b)$ et \eqref{Eq5} sont suffisantes pour caract\'{e}riser les d\'{e}rivations dans $N$.

$\bullet$ $N_{5, 9}(\alpha, \beta)$ : $e_{1}^{2} = e_{4}$, $e_{2}^{2} = \alpha e_{4} + \beta e_{5}$, $e_{3}^{2} = e_{5}$, $e_{4}^{2} = e_{5}^{2} = 0$ avec $\alpha\beta \neq 0$. $(a)$ entra\^{\i}ne $d_{12} = d_{21} = 0$, $d_{13} = d_{31} = 0$ et $d_{23} = d_{32} = 0$ ; \eqref{Eq5} entra\^{\i}ne $d(e_{4}) = 2d_{11}e_{4}$, $d(e_{5}) = 2d_{11}e_{5}$ et $d_{11} = d_{22} = d_{33}$. Donc $d(e_{1}) = d_{11}e_{1} + d_{41}e_{4} + d_{51}e_{5}$, $d(e_{2}) = d_{11}e_{2} + d_{42}e_{4} + d_{52}e_{5}$ et $d(e_{3}) = d_{11}e_{3} + d_{43}e_{4} + d_{53}e_{5}$.

$\bullet$ $N_{6, 17}(\alpha, \beta, \gamma)$ : $e_{1}^{2} = e_{5}$, $e_{2}^{2} = \alpha e_{5} + \beta e_{6}$, $e_{3}^{2} = \gamma e_{6}$, $e_{4}^{2} = e_{6}$, $e_{5}^{2} = e_{6}^{2} = 0$ avec~$\alpha\beta\gamma \neq 0$. $(a)$ entra\^{\i}ne $d_{12} = d_{21} = 0$, $d_{13} = d_{31} = 0$, $d_{14} = d_{41} = 0$, $d_{23} = d_{32} = 0$ et $d_{24} = d_{42} = 0$ ; $(b)$ entra\^{\i}ne $d_{34} = -\gamma^{-1}d_{43}$ ;
\eqref{Eq5} entra\^{\i}ne $d(e_{5}) = 2d_{11}e_{5}$, $d(e_{6}) = 2d_{11}e_{6}$ et $d_{11} = d_{22} = d_{33} = d_{44}$.
Donc $d(e_{1}) = d_{11}e_{1} + d_{51}e_{5} + d_{61}e_{6}$, $d(e_{2}) = d_{11}e_{2} + d_{52}e_{5} + d_{62}e_{6}$, $d(e_{3}) = d_{11}e_{3} + d_{43}e_{4} + d_{53}e_{5} + d_{63}e_{6}$ et $d(e_{4}) = -\gamma^{-1}d_{43}e_{3} + d_{11}e_{4} + d_{54}e_{5} + d_{64}e_{6}$.

$\bullet$ $N_{6, 18}(\alpha, \beta, \gamma, \delta)$ : $e_{1}^{2} = e_{5}$, $e_{2}^{2} = \alpha e_{5} + \beta e_{6}$, $e_{3}^{2} = \gamma e_{5} + \delta e_{6}$, $e_{4}^{2} = e_{6}$, $e_{5}^{2} = e_{6}^{2} = 0$ avec $\alpha\beta \neq 0$, $\gamma\delta \neq 0$ et $\alpha\delta - \beta\gamma \neq 0$. $(a)$ entra\^{\i}ne $d_{12} = d_{21} = 0$, $d_{13} = d_{31} = 0$, $d_{14} = d_{41} = 0$, $d_{23} = d_{32} = 0$, $d_{24} = d_{42} = 0$ et $d_{34} = d_{43} = 0$ ; \eqref{Eq5} entra\^{\i}ne $d(e_{5}) = 2d_{11}e_{5}$, $d(e_{6}) = 2d_{11}e_{6}$ et $d_{11} = d_{22} = d_{33} = d_{44}$. Donc  $d(e_{1}) = d_{11}e_{1} + d_{51}e_{5} + d_{61}e_{6}$, $d(e_{2}) = d_{11}e_{2} + d_{52}e_{5} + d_{62}e_{6}$, $d(e_{3}) = d_{11}e_{3} +  d_{53}e_{5} + d_{63}e_{6}$ et $d(e_{4}) = d_{11}e_{4} + d_{54}e_{5} + d_{64}e_{6}$.
\end{proof}

\begin{prop} Soit $N$ une nil-alg\`{e}bre d'\'{e}volution ind\'{e}composable, associative, de dimension $\leq 6$ telle que $\dim(ann(N)) = 2$. Alors, le crochet de deux d\'{e}rivations dans $N$ est donn\'e dans la Table~$5$.

 \begin{longtable}[c]{|p{2,4cm}|p{8,5cm}|c|}
 \caption*{\underline{\textbf{Table 5}} }\\
  \hline
   % after \\: \hline or \cline{col1-col2} \cline{col3-col4} ...
    {$N$} &   {\small D\'{e}finition de la d\'{e}rivation $[d, d']$} &   {\small $\D(N)'\simeq$}   \\\hline
\endfirsthead
 %\caption*[]{}
 \hline   {$N$} &   {\small D\'{e}finition de la d\'{e}rivation $[d, d']$} &   {\small $\D(N)'\simeq$}   \\\hline
\endhead \hline
\endfoot
{$N_{5,9}(\alpha, \beta)$} & {$[d,d']_{11} =  0$, $[d,d']_{4i} = d_{4i}'d_{11} - d_{4i}d_{11}'$, $[d,d']_{5i} = d_{5i}'d_{11} - d_{5i}d_{11}'$  avec $i = 1, 2, 3$} & {$K^{3}\times K^3$}   \\\hline
{$N_{6,17}(\alpha, \beta, \gamma)$} & {$[d,d']_{11} = [d,d']_{43} = 0$, $[d,d']_{ji} = d_{ji}'d_{11} - d_{ji}d_{11}'$, $[d,d']_{j3} = (d_{j3}'d_{11} - d_{j3}d_{11}') + (d_{43}'d_{j4} - d_{43}d_{j4}')$, $[d,d']_{j4} = (d_{j4}'d_{11} - d_{j4}d_{11}') + \gamma^{-1}(d_{43}d_{j3}' - d_{43}'d_{j3})$ avec $i = 1, 2$ et $j = 5, 6$} & $K^{4}\times K^4$ \\\hline
{$N_{6,18}(\alpha, \beta, \gamma, \delta)$} & {$[d,d']_{11} = 0$, $[d,d']_{ji} = d_{ji}'d_{11} - d_{ji}d_{11}'$ avec $i = 1, 2, 3, 4$ et $j = 5, 6$} & $K^{4}\times K^4$ \\\hline
\end{longtable}
\end{prop}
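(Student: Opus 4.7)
The strategy is to compute $[d,d'] = d \circ d' - d' \circ d$ directly on each basis vector $e_i$, using the explicit parametrisation of $\D(N)$ established in the preceding theorem, and then to read off the components $[d,d']_{ji}$ one by one. The derived algebra $\D(N)'$ will then be identified as the subspace of $\D(N)$ spanned by these commutators.

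For $N_{5,9}(\al,\be)$ and $N_{6,18}(\al,\be,\g,\de)$ the derivation has the uniform shape $d(e_i) = d_{11}e_i + d_{n-1,i}e_{n-1} + d_{ni}e_n$ for $e_i \notin \ann(N)$, while $d$ acts on $\ann(N) = \langle e_{n-1}, e_n\rangle$ by multiplication by $2d_{11}$. Expanding $d(d'(e_i))$ yields an $e_i$-coefficient equal to $d_{11}d_{11}'$ and annihilator coefficients of the form $d_{11}'d_{ki} + 2d_{11}d_{ki}'$; antisymmetrising gives $[d,d']_{11} = 0$ and $[d,d']_{ki} = d_{ki}'d_{11} - d_{ki}d_{11}'$, which matches the table. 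On $e_{n-1}$ and $e_n$ both composites produce $4d_{11}d_{11}'$, which cancels.

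For $N_{6,17}(\al,\be,\g)$ the same reasoning handles $e_1$ and $e_2$, but the pair $(e_3,e_4)$ is coupled by the constraint $d_{34} = -\g^{-1}d_{43}$ inherited from the relation $e_3^{2} = \g e_6 = \g e_4^{2}$. I will carefully expand $d(d'(e_3))$ using both $d(e_3)$ and $d(e_4)$: the $e_3$- and $e_4$-coefficients come out respectively as $d_{11}d_{11}' - \g^{-1}d_{43}d_{43}'$ and $d_{11}d_{43}' + d_{43}d_{11}'$, both symmetric in $(d,d')$ and therefore cancelling in the commutator; this yields $[d,d']_{11} = [d,d']_{43} = 0$. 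The $e_5$- and $e_6$-coefficients then produce the extra term $d_{43}'d_{j4} - d_{43}d_{j4}'$ announced in the table, and the analogous expansion of $[d,d'](e_4)$ produces the corresponding $\g^{-1}$ correction in $[d,d']_{j4}$.

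Finally, to identify $\D(N)'$ I observe that the formulas above show every commutator satisfies $[d,d']_{11} = 0$ in all three cases, and moreover $[d,d']_{43} = 0$ in the case $N_{6,17}$. These vanishing conditions are also sufficient: every admissible choice of the remaining annihilator coefficients is realised as a single commutator by taking one factor with only the ``diagonal'' parameter $d_{11}$ (resp.\ $d_{43}$) nonzero and the other factor with a single annihilator coefficient nonzero. The resulting dimensions then match the last column of Table 5. The delicate point is the $N_{6,17}$ case, where the $-\g^{-1}$ cross-coupling between $e_3$ and $e_4$ must be tracked meticulously through both composites; this is where sign errors are easiest to make.
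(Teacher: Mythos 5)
Your proposal is correct and follows essentially the same route as the paper: expand $d\circ d'(e_i)$ using the parametrisation of $\D(N)$ from the previous theorem, antisymmetrise to get the components $[d,d']_{ji}$ (including the $-\gamma^{-1}$ coupling of $(e_3,e_4)$ for $N_{6,17}$, which you handle correctly), and read off the dimensions. Your short spanning argument for why the commutators fill the whole subspace $\{d_{11}=0\}$ (resp. $\{d_{11}=d_{43}=0\}$) is a small addition the paper leaves implicit, but it is not a different method.
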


\begin{proof} Soient $d, d'$ deux d\'{e}rivations dans $N$. Calculons $[d,d']$.

$\bullet$ $N_{5, 9}(\alpha, \beta)$: Pour $1 \leq j \leq 3$, on a $d\circ d' (e_{j}) = d_{11}'d(e_{j}) + d_{4j}'d(e_{4}) + d_{5j}'d(e_{5})$. Alors $[d, d']_{11} = d_{11}'d_{11} - d_{11}d_{11}' = 0$, $[d, d']_{4j} = (d_{11}'d_{4j} + 2d_{4j}'d_{11}) - (d_{11}d_{4j}' + 2d_{4j}d_{11}') = d_{4j}'d_{11} - d_{4j}d_{11}'$ et $[d, d']_{5j} = (d_{11}'d_{5j} + 2d_{5j}'d_{11}) - (d_{11}d_{5j}' + 2d_{5j}d_{11}') = d_{5j}'d_{11} - d_{5j}d_{11}'$. Donc $\D(N_{5, 9}(\alpha, \beta))'\simeq~K^{3}\times K^{3}$.

$\bullet$ $N_{6, 17}(\alpha, \beta, \gamma)$: Pour $1 \leq j \leq 2$, on a $d\circ d' (e_{j}) = d_{11}'d(e_{j}) + d_{5j}'d(e_{5}) + d_{6j}'d(e_{6})$, alors $[d, d']_{11} = d_{11}'d_{11} - d_{11}d_{11}' = 0$, $[d, d']_{5j} = (d_{11}'d_{5j} + 2d_{5j}'d_{11}) - (d_{11}d_{5j}' + 2d_{5j}d_{11}') = d_{5j}'d_{11} - d_{5j}d_{11}'$ et $[d, d']_{6j} = (d_{11}'d_{6j} + 2d_{6j}'d_{11}) - (d_{11}d_{6j}' + 2d_{6j}d_{11}') = d_{6j}'d_{11} - d_{6j}d_{11}'$. On a $d\circ d' (e_{3}) = d_{11}'d(e_{3}) + d_{43}'d(e_{4}) + d_{53}'d(e_{5}) + d_{63}'d(e_{6})$, alors $[d, d']_{43} = (d_{11}'d_{43} + d_{43}'d_{11}) - (d_{11}d_{43}' + d_{43}d_{11}') = 0$, $[d, d']_{53} = (d_{11}'d_{53} + d_{43}'d_{54} + 2d_{53}'d_{11}) - (d_{11}d_{53}' + d_{43}d_{54}' + 2d_{53}d_{11}') = (d_{53}'d_{11} - d_{53}d_{11}') + (d_{43}'d_{54} - d_{43}d_{54}')$ et $[d, d']_{63} = (d_{11}'d_{63} + d_{43}'d_{64} + 2d_{63}'d_{11}) - (d_{11}d_{63}' + d_{43}d_{64}' + 2d_{63}d_{11}') = (d_{63}'d_{11} - d_{63}d_{11}') + (d_{43}'d_{64} - d_{43}d_{64}')$.
On a $d\circ d' (e_{4}) = -\gamma^{-1}d_{43}'d(e_{3}) + d_{11}'d(e_{4}) + d_{54}'d(e_{5}) + d_{64}'d(e_{6})$, alors $[d, d']_{54} = (-\gamma^{-1}d_{43}'d_{53} + d_{11}'d_{54} + 2d_{54}'d_{11}) - (-\gamma^{-1}d_{43}d_{53}' + d_{11}d_{54}' + 2d_{54}d_{11}') = (d_{54}'d_{11} - d_{54}d_{11}') + \gamma^{-1}(d_{53}'d_{43} - d_{53}d_{43}')$ et $[d, d']_{64} = (-\gamma^{-1}d_{43}'d_{63} + d_{11}'d_{64} + 2d_{64}'d_{11}) - (-\gamma^{-1}d_{43}d_{63}' + d_{11}d_{64}' + 2d_{64}d_{11}') = (d_{64}'d_{11} - d_{64}d_{11}') + \gamma^{-1}(d_{63}'d_{43} - d_{63}d_{43}')$. Donc $\D(N_{6, 17}(\alpha, \beta, \gamma))' \simeq~K^{4}\times K^{4}$.

$\bullet$ $N_{6, 18}(\alpha, \beta, \gamma, \delta)$: Pour $1 \leq j \leq 4$, on a $d\circ d' (e_{j}) = d_{11}'d(e_{j}) + d_{5j}'d(e_{5}) + d_{6j}'d(e_{6})$, alors $[d, d']_{11} = d_{11}'d_{11} - d_{11}d_{11}' =~0$, $[d, d']_{5j} = (d_{11}'d_{5j} + 2d_{5j}'d_{11}) - (d_{11}d_{5j}' + 2d_{5j}d_{11}') = d_{5j}'d_{11} - d_{5j}d_{11}'$ et $[d, d']_{6j} = (d_{11}'d_{6j} + 2d_{6j}'d_{11}) - (d_{11}d_{6j}' + 2d_{6j}d_{11}') = d_{6j}'d_{11} - d_{6j}d_{11}'$. Donc $\D(N_{6, 18}(\alpha, \beta, \gamma, \delta))'\simeq~K^{4}\times K^{4}$.
\end{proof}

\begin{prop} Soient $N$ une nil-alg\`{e}bre d'\'{e}volution ind\'{e}composable associative de dimension $n \geq 3$ telle que $\dim(ann(N)) = 1$ et $a=\sum_{i=1}^na_ie_i\in N$. Alors $R_a$ est une d\'erivation si et seulement si $R_a=\sum_{i=1}^{n-1}\al_ia_ie_{ni}$. De plus $\In(N) = L\simeq K^{n-1}$.
\end{prop}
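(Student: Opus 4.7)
The plan is to exploit directly the fact that $N$ is an evolution algebra in order to compute $R_a$ on the natural basis $B=\{e_1,\ldots,e_n\}$. For each $i$, since $e_ie_k=0$ when $k\neq i$, one has $R_a(e_i)=e_ia=a_ie_i^2$. Substituting the multiplication table $e_i^2=\al_ie_n$ for $1\leq i\leq n-1$ and $e_n^2=0$, this gives $R_a(e_i)=\al_ia_ie_n$ for $i<n$ and $R_a(e_n)=0$. In the matrix-unit notation introduced just before the preceding theorem, these equalities read exactly $R_a=\sum_{i=1}^{n-1}\al_ia_ie_{ni}$, so $R_a\in L$. Note that $a_n$ plays no role, simply because $e_n\in\ann(N)$.

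Both implications of the equivalence then follow at once. The preceding theorem yields $L\subseteq \D(N)$, so the explicit formula above automatically makes $R_a$ a derivation; conversely, for any $a\in N$ the direct computation shows that $R_a$ has precisely this form, hence both conditions coincide.

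To identify $\In(N)$, I would invoke the fact recalled just after the definition of inner derivation that, $N$ being associative, $\Le(N)=R(N)$, so by definition $\In(N)=\D(N)\cap R(N)$. Combined with the previous paragraph, which shows $R(N)\subseteq\D(N)$, this gives $\In(N)=R(N)$. Since every $\al_i$ is nonzero, as $a=\sum_{i=1}^na_ie_i$ ranges over $N$ the tuple $(\al_1a_1,\ldots,\al_{n-1}a_{n-1})$ fills all of $K^{n-1}$, so $R(N)$ coincides with the span of the $e_{ni}$, i.e.\ with $L$. Therefore $\In(N)=L\simeq K^{n-1}$. There is essentially no serious obstacle here: once one writes down $R_a$ on the basis, the result is a direct corollary of the multiplicative structure of $N$ and of the decomposition $\D(N)=H\oplus L\oplus Kg$ already obtained; the only minor points to track are the vanishing of the $i=n$ contribution and the bijectivity given by the nonvanishing of the $\al_i$.
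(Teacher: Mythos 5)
Your proposal is correct and follows essentially the same route as the paper: compute $R_a$ on the natural basis to get $R_a(e_i)=\al_ia_ie_n$, $R_a(e_n)=0$, i.e.\ $R_a\in L\subseteq\D(N)$, and use $\Le(N)=R(N)$ (associativity/commutativity) together with the nonvanishing of the $\al_i$ to conclude $\In(N)=L\simeq K^{n-1}$. You merely spell out the surjectivity onto $L$ and the inclusion $L\subseteq\D(N)$ that the paper leaves implicit in its ``d'o\`u le r\'esultat''.
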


\begin{proof} L'alg\`ebre $N$ \'etant commutative, les seules d\'erivations int\'erieures de $N$ sont de la forme $R_a$, avec $a=\sum_{i=1}^na_ie_i\in N$. On a $R_a(e_i)=\al_i a_ie_n$ et $R_a(e_n)=0$, d'o\`u le r\'esultat.
\end{proof}

\begin{prop} Soit $N$ une nil-alg\`{e}bre d'\'{e}volution ind\'{e}composable associative, de dimension $\leq 6$, telle que $\dim(ann(N)) = 2$. La Table~$6$ caract\'{e}rise les d\'{e}rivations int\'{e}rieures de $N$.

\begin{longtable}[c]{|p{2.5cm}|p{9cm}|c|}
 \caption*{\underline{\textbf{Table 6}} }\\
  \hline
   % after \\: \hline or \cline{col1-col2} \cline{col3-col4} ...
    {$N$} &   {\small $R_{c}\in \D(N)$ si et seulement si} &   {\small $\In(N) \simeq$}   \\\hline
\endfirsthead
 %\caption*[]{}
 \hline   {$N$} &   {\small $R_{c} \in \D(N)$ si et seulement si} &   {\small $\In(N) \simeq$}   \\\hline
\endhead \hline
{$N_{5, 9}(\alpha,\be)$} & {$R_c=c_1e_{41}+c_2(\al e_{42}+\be e_{52})+c_3e_{53}$}  & {$K^{3}$}   \\\hline
{$N_{6, 17}(\al,\be,\g)$} & {$R_c=c_1e_{51}+c_2(\al e_{52}+\be e_{62})+c_3\g e_{63}+c_4e_{64}$} & {$K^4$}   \\\hline
{$N_{6, 18}(\al,\be,\g,\de)$} & {$R_c=c_1e_{51}+c_2(\al e_{52}+\be e_{62})+c_3(\g e_{53}+\delta c_4e_{63})+c_4e_{64}$} & {$K^4$}   \\\hline
\end{longtable}
\end{prop}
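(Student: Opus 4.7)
Le plan consiste \`a exploiter l'hypoth\`ese d'associativit\'e et de commutativit\'e de $N$, qui fournit $\Le(N) = R(N)$ et donc $\In(N) = \D(N)\cap R(N)$ : toute d\'erivation int\'erieure est n\'ecessairement une multiplication \`a droite $R_a$ pour un certain $a\in N$. Il suffit alors de caract\'eriser les $a\in N$ pour lesquels $R_a\in \D(N)$.

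L'observation centrale sera que, pour $x, y\in N$, l'associativit\'e et la commutativit\'e donnent $(xy)a = x(ya) = (xa)y$, d'o\`u
\[
R_a(xy) - xR_a(y) - R_a(x)y = -(xa)y,
\]
et donc $R_a\in \D(N)$ si et seulement si $aN\subseteq \ann(N)$. Or, dans chacune des trois alg\`ebres de la Table~$6$, la description de la base naturelle (rappel\'ee dans la preuve du th\'eor\`eme pr\'ec\'edent) montre que $e_i^2 \in \langle e_{n-1}, e_n\rangle = \ann(N)$ pour $1\leq i\leq n-2$ et $e_{n-1}^2 = e_n^2 = 0$ ; ainsi $N^2\subseteq \ann(N)$ et la condition $aN\subseteq \ann(N)$ est automatiquement satisfaite. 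Par cons\'equent $\In(N)$ co\"incide avec $R(N)$ comme sous-espace de $End(N)$, et il ne reste qu'\`a d\'ecrire explicitement $R_a$ et \`a en calculer la dimension.

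Pour $a = \sum_{i=1}^n a_ie_i$, l'orthogonalit\'e $e_ie_j = 0$ pour $i\neq j$ donne $R_a(e_i) = a_ie_i^2$. En rempla\c cant chaque $e_i^2$ par son expression dans la table de multiplication, on lit directement les formules annonc\'ees ; par exemple, pour $N_{5,9}(\al,\be)$, on obtient $R_a(e_1) = a_1e_4$, $R_a(e_2) = a_2(\al e_4 + \be e_5)$, $R_a(e_3) = a_3e_5$ et $R_a(e_4) = R_a(e_5) = 0$, ce qui fournit $R_a = a_1e_{41} + a_2(\al e_{42} + \be e_{52}) + a_3e_{53}$ ; les cas $N_{6,17}(\al,\be,\g)$ et $N_{6,18}(\al,\be,\g,\de)$ sont enti\`erement analogues.

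Pour la dimension, l'application lin\'eaire $a\mapsto R_a$ a pour noyau l'ensemble des $a\in N$ tels que $a_ie_i^2 = 0$ pour tout $i$. Comme $e_i^2\ne 0$ pour $1\leq i\leq n-2$ et $e_i^2 = 0$ pour $i\in\{n-1, n\}$, ce noyau est pr\'ecis\'ement $\ann(N)$, et l'on obtient $\dim \In(N) = n - \dim\ann(N) = n - 2$, soit $3$ pour $N_{5,9}$ et $4$ pour $N_{6,17}, N_{6,18}$. La preuve ne pr\'esente pas d'obstacle majeur : l'ensemble du raisonnement repose sur l'inclusion $N^2\subseteq \ann(N)$, qui est le seul point conceptuel et qui \'elimine toute v\'erification cas par cas de l'identit\'e de d\'erivation.
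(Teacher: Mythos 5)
Votre preuve est correcte et suit pour l'essentiel la m\^eme d\'emarche que celle du papier~: comme $N$ est commutative et associative, $\In(N)=\D(N)\cap R(N)$, donc toute d\'erivation int\'erieure est une multiplication \`a droite $R_c$, que l'on explicite via $R_c(e_i)=c_ie_i^2$ dans la base naturelle, d'o\`u les formules de la Table~$6$ (votre version corrige d'ailleurs la coquille $\delta c_4e_{63}$ de la table, conform\'ement \`a la preuve du papier qui donne $c_3(\gamma e_{53}+\delta e_{63})$). La seule diff\'erence est que vous rendez explicite, via le crit\`ere $R_a\in\D(N)\Leftrightarrow aN\subseteq\ann(N)$ joint \`a $N^2\subseteq\ann(N)$, le fait que tout $R_c$ est bien une d\'erivation, ainsi que le calcul de dimension par le noyau de $a\mapsto R_a$~; le papier laisse ces v\'erifications implicites (elles d\'ecoulent de la caract\'erisation de $\D(N)$ donn\'ee \`a la Table~$4$), mais votre argument uniforme est valable et m\^eme un peu plus complet.
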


\begin{proof}
Les d\'erivations int\'erieures de $N$ sont de la forme $R_c$, avec $c=\sum_{i=1}^nc_ie_i\in N$.

$\bullet$ $N_{5, 9}(\alpha,\be)$ : on a $R_c(e_1) = c_1e_1^{2} = c_1e_4$, $R_c(e_2) = c_2e_2^{2} = c_2(\alpha e_4 + \be e_5)$, $R_c(e_3) = c_3e_3^{2} = c_3e_5$, $R_c(e_4) = R_c(e_5) = 0$. Les \'{e}l\'{e}ments de $\In(N_{5, 9}(\alpha,\be))$ sont de la forme $R_c = c_1e_{41} + c_2(\alpha e_{42} + \be e_{52}) + c_3e_{53}$ et $\In(N_{5, 9}(\alpha,\be)) \simeq~K^{3}$.

$\bullet$ $N_{6, 17}(\alpha,\be,\gamma)$ : on a $R_c(e_1) = c_1e_1^{2} = c_1e_5$, $R_c(e_2) = c_2e_2^{2} = c_2(\alpha e_5 + \be e_6)$, $R_c(e_3) = c_3e_3^{2} = c_3\gamma e_6$, $R_c(e_4) = c_4e_4^{2} = c_4e_6$, $R_c(e_5) = R_c(e_6) = 0$. Les \'{e}l\'{e}ments de $\In(N_{6, 17}(\alpha,\be,\gamma))$ sont de la forme $R_c = c_1e_{51} + c_2(\alpha e_{52} + \be e_{62}) + c_3\gamma e_{63} + c_4e_{64}$ et \\
$\In(N_{6, 17}(\alpha,\be,\gamma)) \simeq~K^{4}$.

$\bullet$ $N_{6, 18}(\alpha,\be,\gamma, \delta)$ : on a $R_c(e_1) = c_1e_1^{2} = c_1e_5$, $R_c(e_2) = c_2e_2^{2} = c_2(\alpha e_5 + \be e_6)$, $R_c(e_3) = c_3e_3^{2} = c_3(\gamma e_5 + \delta e_6)$, $R_c(e_4) = c_4e_4^{2} = c_4e_6$, $R_c(e_5) = R_c(e_6) = 0$. Les \'{e}l\'{e}ments de $\In(N_{6,18}(\alpha,\be,\gamma, \delta))$ sont de la forme $R_c = c_1e_{51} + c_2(\alpha e_{52} + \be e_{62}) + c_3(\gamma e_{53} + \delta e_{63}) + c_4e_{64}$ et $\In(N_{6,18}(\alpha,\be,\gamma, \delta)) \simeq~K^{4}$.
\end{proof}

\section{Nil-alg\`{e}bres ind\'{e}composables qui n'est pas associatives}

Ici, les d\'erivations int\'erieures sont de la forme $R_c+[R_a,R_b]$.

\subsection{Dimension de l'annulateur de $N$ est $1$}

\begin{theo}\label{DerNonAss} Soit $N$ une nil-alg\`{e}bre d'\'{e}volution ind\'{e}composable, \`{a} puissances associatives, de dimension $\leq 6$, qui n'est pas associative telle que $\dim(ann(N)) = 1$. Alors, la Table $7$ caract\'{e}rise les d\'{e}rivations dans $N$.

\begin{longtable}[c]{|p{2,4cm}|p{8.7cm}|c|}
 \caption*{\underline{\textbf{Table 7}} }\\
  \hline
   % after \\: \hline or \cline{col1-col2} \cline{col3-col4} ...
    {$N$} &   {\small D\'{e}finition de la d\'{e}rivation} &   {\small $\D(N)\simeq$}   \\\hline
\endfirsthead
 %\caption*[]{}
 \hline   {$N$} &   {\small D\'{e}rivation} &   {\small $\D(N)\simeq$}   \\\hline
\endhead \hline
\endfoot
{$N_{4,6}$} & {$d(e_{1}) = d_{11}e_{1} + d_{41}e_{4}$, $d(e_{2}) = d_{22}e_{2} + (2d_{11} - d_{22})e_{3} + d_{42}e_{4}$, $d(e_{3}) = (2d_{11} - d_{22})e_{2} + d_{22}e_{3} - d_{42}e_{4}$, $d(e_{4}) = 2d_{22}e_{4}$} & {$K^{2}\times K^2$}   \\\hline
{$N_{5,10}(\alpha)$} & {$d(e_{1}) = d_{11}e_{1} + d_{51}e_{5}$, $d(e_{2}) = d_{22}e_{2} + (2d_{11} - d_{22})e_{3} + d_{42}e_{4} + d_{52}e_{5}$, $d(e_{3}) = (2d_{11} - d_{22})e_{2} + d_{22}e_{3} - d_{42}e_{4} - d_{52}e_{5}$, $d(e_{4}) = -\alpha d_{42}e_{2} - \alpha d_{42}e_{3} + d_{22}e_{4} + d_{54}e_{5}$, $d(e_{5}) = 2d_{22}e_{5}$} & $K^{3}\times K^3$ \\\hline
{$N_{5,11}(\alpha)$} & {$d(e_{1}) = d_{11}e_{1} + d_{41}e_{4} + d_{51}e_{5}$, $d(e_{2}) = d_{22}e_{2} + (2d_{11} - d_{22})e_{3} + d_{52}e_{5}$, $d(e_{3}) = (2d_{11} - d_{22})e_{2} + d_{22}e_{3} - d_{52}e_{5}$, $d(e_{4}) = -\alpha d_{41}e_{1} + d_{11}e_{4} + d_{54}e_{5}$, $d(e_{5}) = 2d_{22}e_{5}$} & $K^{3}\times K^3$ \\\hline
{$N_{5,12}(\alpha, \beta)$} & {$d(e_{1}) = d_{11}e_{1} + d_{51}e_{5}$, $d(e_{2}) = d_{11}e_{2} + d_{11}e_{3} + d_{52}e_{5}$, $d(e_{3}) = d_{11}e_{2} + d_{11}e_{3} - d_{52}e_{5}$, $d(e_{4}) = d_{11}e_{4} + d_{54}e_{5}$, $d(e_{5}) = 2d_{11}e_{5}$} & $K\times K^3$ \\\hline
{$N_{6,19}(\alpha, \beta)$} & {$d(e_{1}) = d_{11}e_{1} + d_{61}e_{6}$, $d(e_{2}) = d_{22}e_{2} + (2d_{11} - d_{22})e_{3} + d_{42}e_{4} + d_{52}e_{5} + d_{62}e_{6}$, $d(e_{3}) = (2d_{11} - d_{22})e_{2} + d_{22}e_{3} - d_{42}e_{4} - d_{52}e_{5} - d_{62}e_{6}$, $d(e_{4}) = -\alpha d_{42}e_{2} - \alpha d_{42}e_{3} + d_{22}e_{4} + d_{54}e_{5} + d_{64}e_{6}$, $d(e_{5}) = -\beta d_{52}e_{2} - \beta d_{52}e_{3} - \alpha^{-1}\beta d_{54}e_{4} + d_{22}e_{5} + d_{65}e_{6}$, $d(e_{6}) = 2d_{22}e_{6}$} & $K^{5}\times K^4$ \\\hline
{$N_{6,20}(\alpha, \beta)$} & {$d(e_{1}) = d_{11}e_{1} + d_{41}e_{4} + d_{61}e_{6}$, $d(e_{2}) = d_{22}e_{2} + (2d_{11} - d_{22})e_{3} + d_{52}e_{5} + d_{62}e_{6}$, $d(e_{3}) = (2d_{11} - d_{22})e_{2} + d_{22}e_{3} - d_{52}e_{5} - d_{62}e_{6}$, $d(e_{4}) = -\alpha d_{41}e_{1} + d_{11}e_{4} + d_{64}e_{6}$, $d(e_{5}) = -\beta d_{52}e_{2} - \beta d_{52}e_{3} + d_{22}e_{5} + d_{65}e_{6}$, $d(e_{6}) = 2d_{22}e_{6}$} & $K^{4}\times K^4$ \\\hline
{$N_{6,21}(\alpha, \beta, \gamma)$} & {$d(e_{1}) = d_{11}e_{1} + d_{61}e_{6}$, $d(e_{2}) = d_{11}e_{2} + d_{11}e_{3} + d_{52}e_{5} + d_{62}e_{6}$, $d(e_{3}) = d_{11}e_{2} + d_{11}e_{3} - d_{52}e_{5} - d_{62}e_{6}$, $d(e_{4}) = d_{11}e_{4} + d_{64}e_{6}$, $d(e_{5}) = -\gamma d_{52}e_{2} - \gamma d_{52}e_{3} + d_{11}e_{5} + d_{65}e_{6}$, $d(e_{6}) = 2d_{11}e_{6}$} & $K^{2}\times K^4$ \\\hline
{$N_{6,22}(\alpha, \beta)$} & {$d(e_{1}) = d_{11}e_{1} + d_{41}e_{4} + d_{51}e_{5} + d_{61}e_{6}$, $d(e_{2}) = d_{22}e_{2} + (2d_{11} - d_{22})e_{3} + d_{62}e_{6}$, $d(e_{3}) = (2d_{11} - d_{22})e_{2} + d_{22}e_{3} - d_{62}e_{6}$, $d(e_{4}) = -\alpha d_{41}e_{1} + d_{11}e_{4} + d_{54}e_{5} + d_{64}e_{6}$, $d(e_{5}) = -\beta d_{51}e_{1} - \alpha^{-1}\beta d_{54}e_{4} + d_{11}e_{5} + d_{65}e_{6}$, $d(e_{6}) = 2d_{22}e_{6}$} & $K^{5}\times K^4$ \\\hline
{$N_{6,23}(\alpha, \beta, \gamma, \delta)$} & {$d(e_{1}) = d_{11}e_{1} + d_{61}e_{6}$, $d(e_{2}) = d_{11}e_{2} + d_{11}e_{3} + d_{62}e_{6}$, $d(e_{3}) = d_{11}e_{2} + d_{11}e_{3} - d_{62}e_{6}$, $d(e_{4}) = d_{11}e_{4} + d_{64}e_{6}$, $d(e_{5}) = d_{11}e_{5} + d_{65}e_{6}$, $d(e_{6}) = 2d_{11}e_{6}$} & $K\times K^4$ \\\hline
{$N_{6,24}(\alpha, \beta, \gamma)$} & {$d(e_{1}) = d_{11}e_{1} + d_{51}e_{5} + d_{61}e_{6}$, $d(e_{2}) = d_{11}e_{2} + d_{11}e_{3} + d_{62}e_{6}$, $d(e_{3}) = d_{11}e_{2} + d_{11}e_{3} - d_{62}e_{6}$, $d(e_{4}) = d_{11}e_{4} + d_{64}e_{6}$, $d(e_{5}) = -\gamma d_{51}e_{1} + d_{11}e_{5} + d_{65}e_{6}$, $d(e_{6}) = 2d_{11}e_{6}$} & $K^{2}\times K^4$ \\\hline
\end{longtable}
\end{theo}

\begin{proof} On consid\`{e}re les alg\`{e}bres $N_{4, 6}$ ; $N_{5, 10}(\alpha)$ \`{a} $N_{5, 12}(\alpha, \beta)$ et $N_{6, 19}$ \`{a} $N_{6, 24}(\alpha, \beta, \gamma)$. La table de multiplication de la base naturelle de ces alg\`{e}bres v\'{e}rifient la r\'{e}lation $e_{1}^{2} = e_{2} + e_{3}$, $e_{2}^2 = e_{n}$, $e_{3}^{2} = -e_{n}$ et $e_{n}^{2} = 0$ o\`{u} $n=\dim(N) \geq 4$. Ainsi, $(a)$ entra\^{\i}ne $d_{12} = d_{21} = 0$, $d_{13} = d_{31} = 0$ et $(b)$ entra\^{\i}ne $d_{32} = d_{23}$. En d\'{e}rivant $e_{n} = e_{2}^{2}$, on obtient $d(e_{n}) = 2e_{2}d(e_{2}) = 2d_{22}e_{2}^{2} = 2d_{22}e_{n}$ ; on a aussi $d(e_{n}) = 2d_{33}e_{n}$ en d\'{e}rivant $e_{n} = -e_{3}^{2}$. Par cons\'{e}quent $d_{33}=d_{22}$.
En d\'{e}rivant $e_{2} + e_{3} = e_{1}^{2}$, on obtient $d(e_{2}) + d(e_{3}) = 2d_{11}d(e_{1}) = 2d_{11}e_{1}^{2} = 2d_{11}(e_{2} + e_{3})$. Ainsi, $\sum_{j = 2}^{n}(d_{j2} + d_{j3})e_{j} = 2d_{11}(e_{2} + e_{3})$  entra\^{\i}ne $d_{23} = 2d_{11} - d_{22}$ et $d_{j2} = -d_{j3}$ pour $j = 4, \ldots, n$. On en d\'eduit que
\begin{align}\label{Eq6}
%\begin{split}
d(e_{1}) &= d_{11}e_{1} + \sum_{j = 4}^{n}d_{j1}e_{j},\textrm{ }d(e_{2}) = d_{22}e_{2} + (2d_{11} - d_{22})e_{3} + \sum_{j = 4}^{n}d_{j2}e_{j},\notag\\
d(e_{3}) &= (2d_{11} - d_{22})e_{2} + d_{22}e_{3} - \sum_{j = 4}^{n}d_{j2}e_{j}\textrm{ et }d(e_{n}) = 2d_{22}e_{n}.
%\end{split}
\end{align}

$\bullet$ $N_{4, 6}$ : $e_{1}^{2} = e_{2} + e_{3},$ $e_{2}^{2} = e_{4},$ $e_{3}^{2} = -e_{4},$ $e_{4}^{2} = 0$.
\eqref{Eq6} entraîne $d(e_{1}) = d_{11}e_{1} + d_{41}e_{4}$, $d(e_{2}) = d_{22}e_{2} + (2d_{11} - d_{22})e_{3} + d_{42}e_{4}$, $d(e_{3}) = (2d_{11} - d_{22})e_{2} + d_{22}e_{3} - d_{42}e_{4}$ et $d(e_{4}) = 2d_{22}e_{4}$.

$\bullet$ $N_{5, 10}(\alpha)$ : $e_{1}^{2} = e_{2} + e_{3},$ $e_{2}^{2} = e_{5},$ $e_{3}^{2} = -e_{5},$ $e_{4}^{2} =  \alpha e_{5},$ $e_{5}^{2} = 0$ avec $\alpha \in K^{*}$. De $(a)$ on a $d_{14} = d_{41} = 0$ ; $(b)$ donne $d_{24} = -\alpha d_{42}$ et $d_{34} = \alpha d_{43}$ ; de \eqref{Eq6} on a  $d(e_{1}) = d_{11}e_{1} + d_{51}e_{5}$, $d(e_{2}) = d_{22}e_{2} + (2d_{11} - d_{22})e_{3} + d_{42}e_{4} + d_{52}e_{5}$, $d(e_{3}) = (2d_{11} - d_{22})e_{2} + d_{22}e_{3} - d_{42}e_{4} - d_{52}e_{5}$, $d(e_{5}) = 2d_{22}e_{5}$. En d\'{e}rivant $\alpha e_{5} = e_{4}^{2}$, on obtient $\alpha d(e_{5}) = 2e_{4}d(e_{4}) = 2d_{44}e_{4}^{2} = 2\alpha d_{44}e_{5}$. Donc $d(e_{5}) = 2d_{44}e_{5}$ car $\alpha \neq 0$ et comme $d(e_{5}) = 2d_{22}e_{5}$, alors $d_{22} = d_{44}$. Puisque $d_{43} = -d_{42}$ et $d_{34} = \alpha d_{43}$ alors $d_{34} = -\alpha d_{42}$ ; ainsi $d(e_{4}) = -\alpha d_{42}e_{2} - \alpha d_{42}e_{3} + d_{22}e_{2} + d_{54}e_{5}$.

$\bullet$ $N_{5, 11}(\alpha)$ : $e_{1}^{2} = e_{2} + e_{3},$ $e_{2}^{2} = e_{5},$ $e_{3}^{2} = -e_{5},$ $e_{4}^{2} =  \alpha(e_{2} + e_{3}),$ $e_{5}^{2} = 0$ avec $\alpha \in K^{*}$. De $(a)$ on a $d_{24} = d_{42} = 0$ et $d_{34} = d_{43} = 0$ ; $(b)$ entra\^{\i}ne $d_{14} = -\alpha d_{41}$ ; de \eqref{Eq6} il vient $d(e_{1}) = d_{11}e_{1} + d_{41}e_{4} + d_{51}e_{5}$, $d(e_{2}) = d_{22}e_{2} + (2d_{11} - d_{22})e_{3} + d_{52}e_{5}$, $d(e_{3}) = (2d_{11} - d_{22})e_{2} + d_{22}e_{3} - d_{52}e_{5}$ et $d(e_{5}) = 2d_{22}e_{5}$. En d\'{e}rivant $e_{4}^{2} = \alpha e_{1}^{2}$, on obtient $2\alpha d_{44}e_{1}^{2} = 2\alpha d_{11}e_{1}^{2}$ ; donc $d_{44} = d_{11}$ car $2\alpha e_{1}^{2} \neq 0$. On en d\'{e}duit que $d(e_{4}) = -\alpha d_{41}e_{1} + d_{11}e_{4} + d_{54}e_{5}$.

$\bullet$ $N_{5, 12}(\alpha, \beta)$ : $e_{1}^{2} = e_{2} + e_{3},$ $e_{2}^{2} = e_{5},$ $e_{3}^{2} = -e_{5},$ $e_{4}^{2} =  \alpha(e_{2} + e_{3}) + \beta e_{5},$ $e_{5}^{2} = 0$ avec $\alpha, \beta \in K^{*}$. De $(a)$ on a $d_{14} = d_{41} = 0$, $d_{24} = d_{42} = 0$ et $d_{34} = d_{43} = 0$ ; \eqref{Eq6} entra\^{\i}ne $d(e_{1}) = d_{11}e_{1} + d_{51}e_{5}$, $d(e_{2}) = d_{22}e_{2} + (2d_{11} - d_{22})e_{3} + d_{52}e_{5}$, $d(e_{3}) = (2d_{11} - d_{22})e_{2} + d_{22}e_{3} - d_{52}e_{5}$ et $d(e_{5}) = 2d_{22}e_{5}$. En d\'{e}rivant $e_{4}^{2} = \alpha e_{1}^{2} + \beta e_{5}$, on obtient $2d_{44}e_{4}^{2} = 2\alpha d_{11}e_{1}^{2} + \beta d(e_{5})$, i.e. $2\alpha d_{44}(e_{2} + e_{3}) + 2\beta d_{44}e_{5} = 2\alpha d_{11}(e_{2} + e_{3}) + 2\beta d_{22} e_{5}$ ; donc $d_{11} = d_{44} = d_{22}$ car $\alpha\beta \neq 0$. En cons\'{e}quence  $d(e_{1}) = d_{11}e_{1} + d_{51}e_{1}$, $d(e_{2}) = d_{11}e_{2} + d_{11}e_{3} + d_{52}e_{5}$, $d(e_{3}) = d_{11}e_{2}  + d_{11}e_{3} - d_{52}e_{5}$, $d(e_{4}) = d_{11}e_{4} + d_{54}e_{5}$ et $d(e_{5}) = 2d_{11}e_{5}$.

$\bullet$ $N_{6, 19}(\alpha, \beta)$ : $e_{1}^{2} = e_{2} + e_{3}$, $e_{2}^{2} = e_{6}$, $e_{3}^{2} = -e_{6}$, $e_{4}^{2} = \alpha e_{6}$, $e_{5}^{2} = \beta e_{6}$, $e_{6}^{2} = 0$ avec $\alpha\beta \neq 0$. De $(a)$ on a $d_{14} = d_{41} = 0$ et $d_{15} = d_{51} = 0$ ; $(b)$ entra\^{\i}ne $d_{24} = -\alpha d_{42}$, $d_{25} = -\beta d_{52}$, $d_{34} = \alpha d_{43}$, $d_{35} = \beta d_{53}$ et $d_{45} = -\alpha^{-1}\beta d_{54}$ ; de \eqref{Eq6} on obtient $d(e_{1}) = d_{11}e_{1} + d_{61}e_{6}$, $d(e_{2}) = d_{22}e_{2} + (2d_{11} - d_{22})e_{3} + d_{42}e_{4} + d_{52}e_{5} + d_{62}e_{6}$, $d(e_{3}) = (2d_{11} - d_{22})e_{2} + d_{22}e_{3} - d_{42}e_{4} - d_{52}e_{5} - d_{62}e_{6}$ et $d(e_{6}) = 2d_{22}e_{6}$. En d\'{e}rivant $\alpha e_{6} = e_{4}^{2}$, on obtient $2\alpha d_{22}e_{6} = 2d_{44}e_{4}^{2} = 2\alpha d_{44}e_{6}$ et comme $d(e_{6}) = 2d_{22}e_{6}$, alors $d_{22} = d_{44}$. On montre de m\^eme que $d_{22} = d_{55}$ en d\'{e}rivant $\beta e_{6} = e_{5}^{2}$. Puisque $d_{43} = -d_{42}$ et $d_{53} = -d_{52}$, alors $d_{34} = \alpha d_{43} = -\alpha d_{42}$ et $d_{35} = \beta d_{53} = -\beta d_{52}$. On en d\'{e}duit que $d(e_{4}) = -\alpha d_{42}e_{2} - \alpha d_{42}e_{3} + d_{22}e_{4} + d_{54}e_{5} + d_{64}e_{6}$ et $d(e_{5}) = -\beta d_{52}e_{2} - \beta d_{52}e_{3} - \alpha^{-1}\beta d_{54}e_{4} + d_{22}e_{5} + d_{65}e_{6}$.

$\bullet$ $N_{6, 20}(\alpha, \beta)$ : $e_{1}^{2} = e_{2} + e_{3}$, $e_{2}^{2} = e_{6}$, $e_{3}^{2} = -e_{6}$, $e_{4}^{2} = \alpha(e_{2} + e_{3})$, $e_{5}^{2} = \beta e_{6}$, $e_{6}^{2} = 0$ avec $\alpha\beta \neq 0$. De $(a)$ on a $d_{15} = d_{51} = 0$, $d_{24} = d_{42} = 0$, $d_{34} = d_{43} = 0$ et $d_{45} = d_{54} = 0$ ; $(b)$ entra\^{\i}ne $d_{14} = -\alpha d_{41}$, $d_{25} = -\beta d_{52}$, $d_{35} = \beta d_{53}$ ; de \eqref{Eq6} on a $d(e_{1}) = d_{11}e_{1} + d_{41}e_{4} + d_{61}e_{6}$, $d(e_{2}) = d_{22}e_{2} + (2d_{11} - d_{22})e_{3} + d_{52}e_{5} + d_{62}e_{6}$, $d(e_{2}) = (2d_{11} - d_{22})e_{2} + d_{22}e_{3} - d_{52}e_{5} - d_{62}e_{6}$ et $d(e_{6}) = 2d_{22}e_{6}$. En d\'{e}rivant $e_{4}^{2} = \alpha e_{1}^{2}$, on obtient $2d_{44}e_{4}^{2} = 2\alpha d_{11}e_{1}^{2}$, i.e. $2\alpha d_{44}e_{1}^{2} = 2\alpha d_{11}e_{1}^{2}$ ; donc $d_{44} = d_{11}$. En d\'{e}rivant $e_{5}^{2} = \beta e_{6}$, on obtient $\beta d(e_{6}) = 2d_{55}e_{5}^{2}$, i.e. $2\beta d_{22}e_{6} = 2\beta d_{55}e_{6}$ ; donc $d_{22} = d_{55}$. Puisque $d_{53} = -d_{52}$, alors $d_{35} = \beta d_{53} = -\beta d_{52}$. Ainsi  $d(e_{4}) = -\alpha d_{41}e_{1} + d_{11}e_{4} + d_{64}e_{6}$ et $d(e_{5}) = -\beta d_{52} e_{2} - \beta d_{52} e_{3} + d_{22}e_{5} + d_{65}e_{6}$.

$\bullet$ $N_{6, 21}(\alpha, \beta, \gamma)$ : $e_{1}^{2} = e_{2} + e_{3}$, $e_{2}^{2} = e_{6}$, $e_{3}^{2} = -e_{6}$, $e_{4}^{2} = \alpha(e_{2} + e_{3}) + \beta e_{6}$, $e_{5}^{2} = \gamma e_{6}$, $e_{6}^{2} = 0$ avec $\alpha\beta\gamma \neq 0$. De $(a)$ on a $d_{14} = d_{41} = 0$, $d_{15} = d_{51} = 0$, $d_{24} = d_{42} = 0$, $d_{34} = d_{43} = 0$ et $d_{45} = d_{54} = 0$ ; $(b)$ entra\^{\i}ne $d_{25} = -\gamma d_{52}$, $d_{35} = \gamma d_{53}$ ; de \eqref{Eq6} on a $d(e_{1}) = d_{11}e_{1} + d_{61}e_{6}$, $d(e_{2}) = d_{22}e_{2} + (2d_{11} - d_{22})e_{3} + d_{52}e_{5} + d_{62}e_{6}$, $d(e_{3}) = (2d_{11} - d_{22})e_{2} + d_{22}e_{3} - d_{52}e_{5} - d_{62}e_{6}$ et $d(e_{6}) = 2d_{22}e_{6}$.
En d\'{e}rivant $e_{4}^{2} = \alpha e_{1}^{2} + \beta e_{6}$, on obtient $2d_{11}e_{1}^{2} + \beta d(e_{6}) = 2d_{44}e_{4}^{2} = 2\alpha d_{44}e_{1}^{2} + 2\beta d_{44}e_{6}$, i.e. $2d_{11}(e_{2} + e_{3}) + 2\beta d_{22}e_{6} = 2\alpha d_{44}(e_{2} + e_{3}) + 2\beta d_{44}e_{6}$. Donc $d_{11} = d_{44} = d_{22}$. En d\'{e}rivant $e_{5}^{2} = \gamma e_{6}$, on obtient $2\gamma d_{22}e_{6} = 2d_{55}e_{5}^{2} = 2\gamma d_{55}e_{6}$, donc $d_{22} = d_{55}$.
Puisque $d_{53} = -d_{52}$, alors $d_{35} = \gamma d_{53} = - \gamma d_{52}$. On en d\'{e}duit que $d(e_{2}) = d_{11}e_{2} + d_{11}e_{3} + d_{52}e_{5} + d_{62}e_{6}$, $d(e_{3}) = d_{11}e_{2} + d_{11}e_{3} - d_{52}e_{5} - d_{62}e_{6}$, $d(e_{4}) = d_{11}e_{4} + d_{64}e_{6}$, $d(e_{5}) = -\gamma d_{52} e_{2} - \gamma d_{52} e_{3} + d_{11}e_{5} + d_{65}e_{6}$, $d(e_{6}) = 2d_{11}e_{6}$.

$\bullet$ $N_{6, 22}(\alpha, \beta)$ : $e_{1}^{2} = e_{2} + e_{3}$, $e_{2}^{2} = e_{6}$, $e_{3}^{2} = -e_{6}$, $e_{4}^{2} = \alpha(e_{2} + e_{3})$, $e_{5}^{2} = \beta(e_{2} + e_{3})$, $e_{6}^{2} = 0$ avec $\alpha\beta \neq 0$. De $(a)$ on a $d_{24} = d_{42} = 0$, $d_{25} = d_{52} = 0$, $d_{34} = d_{43} = 0$ et $d_{35} = d_{53} = 0$ ; $(b)$ donne $d_{14} = -\alpha d_{41}$, $d_{15} = -\beta d_{51}$, $d_{45} = -\alpha^{-1}\beta d_{54}$ ; \eqref{Eq6} entra\^{\i}ne $d(e_{1}) = d_{11}e_{1} + d_{41}e_{4} + d_{51}e_{5} + d_{61}e_{6}$, $d(e_{2}) = d_{22}e_{2} + (2d_{11} - d_{22})e_{3} + d_{62}e_{6}$, $d(e_{3}) = (2d_{11} - d_{22})e_{2} + d_{22}e_{3} - d_{62}e_{6}$ et $d(e_{6}) = 2d_{22}e_{6}$. En d\'{e}rivant $e_{4}^{2} = \alpha e_{1}^{2}$, on obtient $2\alpha d_{11}e_{1}^{2} = 2d_{44}e_{4}^{2} = 2\alpha d_{44}e_{1}^{2}$ ; donc $d_{11} = d_{44}$. De fa\c{c}on analogue, en d\'{e}rivant $e_{5}^{2} = \beta e_{1}^{2}$, on obtient $d_{11} = d_{55}$. Il en r\'{e}sulte que $d(e_{4}) = -\alpha d_{41}e_{1} + d_{11}e_{4} + d_{54}e_{5} + d_{64}e_{6}$ et $d(e_{5}) = -\beta d_{51}e_{1} -\alpha^{-1}\beta d_{54}e_{4} + d_{11}e_{5} + d_{65}e_{6}$.

$\bullet$ $N_{6, 23}(\alpha, \beta, \gamma, \delta)$ : $e_{1}^{2} = e_{2} + e_{3}$, $e_{2}^{2} = e_{6}$, $e_{3}^{2} = -e_{6}$, $e_{4}^{2} = \alpha(e_{2} + e_{3}) + \beta e_{6}$, $e_{5}^{2} = \gamma(e_{2} + e_{3}) + \delta e_{6}$, $e_{6}^{2} = 0$ avec $\alpha\gamma \neq 0$, $\beta\delta \neq 0$, $\alpha\delta - \beta\gamma \neq 0$. De $(a)$ on a $d_{14} = d_{41} = 0$, $d_{15} = d_{51} = 0$, $d_{24} = d_{42} = 0$, $d_{25} = d_{52} = 0$, $d_{34} = d_{43} = 0$, $d_{35} = d_{53} = 0$ et $d_{45} = d_{54} = 0$ ; \eqref{Eq6} donne $d(e_{1}) = d_{11}e_{1} + d_{61}e_{6}$, $d(e_{2}) = d_{22}e_{2} + (2d_{11} - d_{22})e_{3} + d_{62}e_{6}$, $d(e_{3}) = (2d_{11} - d_{22})e_{2} + d_{22}e_{3} - d_{62}e_{6}$ et $d(e_{6}) = 2d_{22}e_{6}$. En d\'{e}rivant $e_{4}^{2} = \alpha e_{1}^{2} + \beta e_{6}$, on obtient $2\alpha d_{11}(e_{2} + e_{3}) + 2\beta d_{22}e_{6} = 2d_{44}e_{4}^{2} = 2\alpha d_{44}(e_{2} + e_{3}) + 2\beta d_{44}e_{6}$ ; donc $d_{11} = d_{44} = d_{22}$. De fa\c{c}on analogue, en d\'{e}rivant $e_{5}^{2} = \gamma e_{1}^{2} + \delta e_{6}$, on obtient $d_{11} = d_{55} = d_{22}$. On en d\'{e}duit que $d(e_{2}) = d_{11}e_{2} + d_{11}e_{3} + d_{62}e_{6}$, $d(e_{3}) = d_{11}e_{2} + d_{11}e_{3} - d_{62}e_{6}$, $d(e_{4}) = d_{11}e_{4} + d_{64}e_{6}$, $d(e_{5}) = d_{11}e_{5} + d_{65}e_{6}$ et $d(e_{6}) = 2d_{11}e_{6}$.

$\bullet$ $N_{6, 24}(\alpha, \beta, \gamma)$ : $e_{1}^{2} = e_{2} + e_{3}$, $e_{2}^{2} = e_{6}$, $e_{3}^{2} = -e_{6}$, $e_{4}^{2} = \alpha(e_{2} + e_{3}) + \beta e_{6}$, $e_{5}^{2} = \gamma(e_{2} + e_{3})$, $e_{6}^{2} = 0$ avec $\alpha\beta\gamma \neq 0$. De $(a)$ on a $d_{14} = d_{41} = 0$, $d_{24} = d_{42} = 0$, $d_{25} = d_{52} = 0$, $d_{34} = d_{43} = 0$, $d_{35} = d_{53} = 0$ et $d_{45} = d_{54} = 0$ ; $(b)$ donne $d_{15} = -\gamma d_{51}$ ; \eqref{Eq6} entra\^{\i}ne $d(e_{1}) = d_{11}e_{1} + d_{51}e_{5} + d_{61}e_{6}$, $d(e_{2}) = d_{22}e_{2} + (2d_{11} - d_{22})e_{3} + d_{62}e_{6}$, $d(e_{3}) = (2d_{11} - d_{22})e_{2} + d_{22}e_{3} - d_{62}e_{6}$ et $d(e_{6}) = 2d_{22}e_{6}$. En d\'{e}rivant $e_{4}^{2} = \alpha e_{1}^{2} + \beta e_{6}$, on obtient $2\alpha d_{11}(e_{2} + e_{3}) + 2\beta d_{22}e_{6} = 2d_{44}e_{4}^{2} = 2\alpha d_{44}(e_{2} + e_{3}) + 2\beta d_{44}e_{6}$ ; donc $d_{11} = d_{44} = d_{22}$. En d\'{e}rivant $e_{5}^{2} = \gamma e_{1}^{2}$, on obtient $2\gamma d_{11}(e_{2} + e_{3}) = 2d_{55}e_{5}^{2} = 2\gamma d_{55}(e_{2} + e_{3})$ ; donc $d_{11} = d_{55}$. Ainsi $d(e_{2}) = d_{11}e_{2} + d_{11}e_{3} + d_{62}e_{6}$, $d(e_{3}) = d_{11}e_{2} + d_{11}e_{3} - d_{62}e_{6}$, $d(e_{4}) = d_{11}e_{4} + d_{64}e_{6}$, $d(e_{5}) = -\gamma d_{51}e_{5} + d_{11}e_{5} + d_{65}e_{6}$ et $d(e_{6}) = 2d_{11}e_{6}$.
\end{proof}

\begin{prop} Soit $N$ une nil-alg\`{e}bre d'\'{e}volution ind\'{e}composable \`{a} puissances associatives, de dimension $\leq 6$, qui n'est pas associatives, telle que $\dim(ann(N)) =~1$. Alors le crochet des d\'{e}rivations $d$ et $d'$ de $N$ est donn\'{e} par la Table~$8$.

\begin{longtable}[c]{|p{2.5cm}|p{9cm}|c|}
 \caption*{\underline{\textbf{Table 8}} }\\
  \hline
   % after \\: \hline or \cline{col1-col2} \cline{col3-col4} ...
    {$N$} &   {\small D\'{e}finition de la d\'{e}rivation $[d, d']$} &   {\small $\D(N)'\simeq$}   \\\hline
\endfirsthead
 %\caption*[]{}
 \hline   {$N$} &   {\small D\'{e}finition de la d\'{e}rivation $[d, d']$} &   {\small $\D(N)'\simeq$}   \\\hline
\endhead \hline
{$N_{4,6}$} & {$[d, d']_{11} = [d, d']_{22} = 0$, $[d, d']_{41} =  (d_{11}'d_{41} - d_{11}d_{41}') + 2(d_{41}'d_{22} - d_{41}d_{22}')$, $[d, d']_{42} = 2(d_{42}'d_{11} - d_{42}d_{11}')$} & {$K^{2}$}   \\\hline
{$N_{5,10}(\alpha)$} & {$[d, d']_{11} = [d, d']_{22} = 0$, $[d, d']_{51} = (d_{11}'d_{51} - d_{11}d_{51}') +  2(d_{51}'d_{22} - d_{51}d_{22}')$, $[d, d']_{42} = (d_{22}'d_{42} - d_{22}d_{42}') + 2(d_{42}'d_{11} - d_{42}d_{11}')$, $[d, d']_{52} = (d_{42}'d_{54} - d_{42}d_{54}') + 2(d_{52}'d_{11} - d_{52}d_{11}')$, $[d, d']_{54} = d_{54}'d_{22} - d_{54}d_{22}'$} & $K\times K^{3}$ \\\hline
{$N_{5,11}(\alpha)$} & {$[d, d']_{11} = [d, d']_{41} = [d, d']_{22} = 0$, $[d, d']_{51} = (d_{11}'d_{51} - d_{11}d_{51}') + (d_{41}'d_{54} - d_{41}d_{54}') + 2(d_{51}'d_{22} - d_{51}d_{22}')$, $[d, d']_{52} =  2(d_{52}'d_{11} - d_{52}'d_{11})$, $[d, d']_{54} = -\alpha(d_{41}'d_{51} - d_{41}d_{51}') + (d_{11}'d_{54} - d_{11}d_{54}') + 2(d_{54}'d_{22} - d_{54}d_{22}')$} & $K^{3}$ \\\hline
{$N_{5,12}(\alpha, \beta)$} & {$[d, d']_{11} = 0$, $[d, d']_{51} =  d_{51}'d_{11} - d_{51}d_{11}'$, $[d, d']_{52} =  2(d_{52}'d_{11} - d_{52}d_{11}')$, $[d, d']_{54} = d_{54}'d_{11} - d_{54}d_{11}'$} & $K^{3}$ \\\hline
{$N_{6,19}(\alpha, \beta)$} & {$[d, d']_{11} = [d, d']_{22} = [d, d']_{54} = 0$, $[d, d']_{61} = (d_{11}'d_{61} - d_{11}d_{61}') + 2(d_{61}'d_{22} - d_{61}d_{22}')$, $[d, d']_{42} = (d_{22}'d_{42} - d_{22}d_{42}') + 2(d_{11}d_{42}' - d_{11}'d_{42}) - \alpha^{-1}\beta(d_{52}'d_{54} - d_{52}d_{54}')$, $[d, d']_{52} = (d_{42}'d_{54} - d_{42}d_{54}') + (d_{22}'d_{52} - d_{22}d_{52}') + 2(d_{11}d_{52}' - d_{11}'d_{52})$, $[d, d']_{62} = (d_{42}'d_{64} - d_{42}d_{64}') + (d_{52}'d_{65} - d_{52}d_{65}') + 2(d_{62}'d_{11} - d_{62}d_{11}')$, $[d, d']_{64} = (d_{64}'d_{22} - d_{64}d_{22}') + (d_{54}'d_{65} - d_{54}d_{65}')$,
$[d, d']_{65} = -\alpha^{-1}\beta(d_{54}'d_{64} - d_{54}d_{64}') + (d_{65}'d_{22} - d_{65}d_{22}')$} & $K^2\times K^{4}$ \\\hline
{$N_{6,20}(\alpha, \beta)$} & {$[d, d']_{11} = [d, d']_{41} = [d, d']_{22}  = 0$, $[d, d']_{61} = (d_{11}'d_{61} - d_{11}d_{61}') + (d_{41}'d_{64} - d_{41}d_{64}') + 2(d_{61}'d_{22} - d_{61}d_{22}')$, $[d, d']_{52} = (d_{22}'d_{52} - d_{22}d_{52}') + 2(d_{11}d_{52}' - d_{11}'d_{52})$, $[d, d']_{62} = (d_{52}'d_{65} - d_{52}d_{65}') + 2(d_{11}d_{62}' - d_{11}'d_{62})$, $[d, d']_{64} = -\alpha(d_{41}'d_{61} - d_{41}d_{61}') + (d_{11}'d_{64} - d_{11}d_{64}') + 2(d_{64}'d_{22} - d_{64}d_{22}')$, $[d, d']_{65} = d_{65}'d_{22} - d_{65}d_{22}'$} & $K\times K^{4}$ \\\hline
{$N_{6,21}(\alpha, \beta, \gamma)$} & {$[d, d']_{11} = 0$, $[d, d']_{61} = d_{61}'d_{11} - d_{61}d_{11}'$, $[d, d']_{52} = d_{52}'d_{11} - d_{52}d_{11}'$, $[d, d']_{62} = (d_{52}'d_{65} - d_{52}d_{65}')  + 2(d_{62}'d_{11} - d_{62}'d_{11})$, $[d, d']_{64} = d_{64}'d_{11} - d_{64}'d_{11}$, $[d, d']_{65} = d_{65}'d_{11} - d_{65}d_{11}'$} & $K\times K^{4}$ \\\hline
{$N_{6,22}(\alpha, \beta)$} & {$[d, d']_{11} = [d, d']_{22} = 0$, $[d, d']_{41} = \alpha^{-1}\beta(d_{51}d_{54}' - d_{51}'d_{54})$, $[d, d']_{51} =  d_{41}'d_{54} - d_{41}d_{54}'$, $[d, d']_{61} = (d_{11}'d_{61} - d_{11}d_{61}') + (d_{41}'d_{64} - d_{41}d_{64}') + (d_{51}'d_{65} - d_{51}d_{65}') + 2(d_{61}'d_{22} - d_{61}d_{22}')$, $[d, d']_{62} =  2(d_{11}d_{62}' - d_{11}'d_{62})$, $[d, d']_{54} = \alpha(d_{41}d_{51}' - d_{41}'d_{51})$, $[d, d']_{64} = (d_{64}'d_{11} - d_{64}d_{11}') + (d_{54}'d_{65} - d_{54}d_{65}') + \alpha(d_{41}d_{61}' - d_{41}'d_{61})$, $[d, d']_{65} = -\beta(d_{51}'d_{61} - d_{51}d_{61}') - \alpha^{-1}\beta(d_{54}'d_{64} - d_{54}d_{64}') + (d_{11}'d_{65} - d_{11}d_{65}') + 2(d_{65}'d_{22} - d_{65}d_{22}')$} & $K^3\times K^{4}$ \\\hline
{$N_{6,23}(\alpha, \beta, \gamma, \delta)$} & {$[d, d']_{11} = 0$, $[d, d']_{61} = d_{61}'d_{11} - d_{11}'d_{61}$, $[d, d']_{62} = 2(d_{62}'d_{11} - d_{62}d_{11}')$, $[d, d']_{64} = d_{64}'d_{11} - d_{64}d_{11}'$, $[d, d']_{65} = d_{65}'d_{11} - d_{65}d_{11}'$} & $K^{4}$ \\\hline
{$N_{6,24}(\alpha, \beta, \gamma)$} & {$[d, d']_{11} = [d, d']_{51} = 0$, $[d, d']_{61} = (d_{61}'d_{11} - d_{61}d_{11}') + (d_{51}'d_{65} - d_{51}d_{65}')$, $[d, d']_{62} = 2(d_{62}'d_{11} - d_{62}d_{11}')$, $[d, d']_{64} = d_{64}'d_{11} - d_{64}d_{11}'$, $[d, d']_{65} = \gamma(d_{51}d_{61}' - d_{51}'d_{61}) + (d_{65}'d_{11} - d_{65}d_{11}')$} & $K^{4}$ \\\hline
\end{longtable}
\end{prop}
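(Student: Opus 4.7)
L'id\'{e}e est d'utiliser la param\'{e}trisation explicite des d\'{e}rivations fournie par le Th\'{e}or\`{e}me~\ref{DerNonAss} et de calculer, pour chacune des dix alg\`{e}bres list\'{e}es, le commutateur $[d, d']$ composante par composante. Puisque $[d, d']$ est lui-m\^{e}me une d\'{e}rivation, ses coefficients doivent satisfaire la m\^{e}me param\'{e}trisation qu'une d\'{e}rivation g\'{e}n\'{e}rique, ce qui fournit un contr\^{o}le de coh\'{e}rence appr\'{e}ciable tout au long du calcul.

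Pour chaque alg\`{e}bre $N$ fix\'{e}e, je proc\'{e}derai comme suit. D'abord, j'\'{e}crirai deux d\'{e}rivations g\'{e}n\'{e}riques $d$ et $d'$ sous la forme explicite donn\'{e}e par le Th\'{e}or\`{e}me~\ref{DerNonAss}, en rep\'{e}rant bien quels coefficients $d_{ij}$ sont ind\'{e}pendants et quelles \'{e}galit\'{e}s sont impos\'{e}es (par exemple $d_{33} = d_{22}$, $d_{44} = d_{11}$ ou $d_{23} = 2d_{11} - d_{22}$). Ensuite, pour chaque vecteur de base $e_{i}$, je d\'{e}velopperai $d\circ d'(e_{i}) = \sum_{j} d'_{ji}\, d(e_{j})$ en substituant les expressions des $d(e_{j})$ d\'{e}j\`{a} calcul\'{e}es, puis sym\'{e}triquement pour $d'\circ d(e_{i})$. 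Enfin, je formerai la diff\'{e}rence~: les termes sym\'{e}triques en $(d, d')$, notamment les produits diagonaux $d_{ii} d'_{ii}$, s'annulent, et il ne subsiste que les combinaisons antisym\'{e}triques $d'_{ji} d_{kl} - d_{ji} d'_{kl}$ qui constituent les entr\'{e}es de la Table~$8$. La dimension de $\D(N)'$ s'obtient alors en comptant les directions lin\'{e}airement ind\'{e}pendantes engendr\'{e}es par ces entr\'{e}es, en tenant compte des \'{e}ventuelles d\'{e}pendances lin\'{e}aires entre elles.

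La principale difficult\'{e} sera purement calculatoire et combinatoire~: dix alg\`{e}bres \`{a} traiter, chacune avec ses scalaires structurels ($\alpha, \beta, \gamma, \delta$) intervenant diff\'{e}remment. Les alg\`{e}bres telles que $N_{6,19}(\alpha, \beta)$ et $N_{6,22}(\alpha, \beta)$ seront les plus fastidieuses, car les d\'{e}pendances impos\'{e}es par la r\`{e}gle $(b)$ --- par exemple $d_{45} = -\alpha^{-1}\beta d_{54}$ --- produisent des termes crois\'{e}s non-triviaux tels que $\alpha^{-1}\beta(d_{54}' d_{52} - d_{54} d_{52}')$ qu'il faudra tracer avec soin \`{a} travers tout le calcul. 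Aucune nouvelle id\'{e}e conceptuelle n'est requise au-del\`{a} de ce travail m\'{e}canique~; le fait que le r\'{e}sultat doive \^{e}tre une d\'{e}rivation (Th\'{e}or\`{e}me~\ref{DerNonAss}) offre une v\'{e}rification post\'{e}rieure ligne par ligne qui \'{e}limine la plupart des erreurs de signe.
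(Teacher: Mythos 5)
Votre d\'{e}marche est correcte et co\"{\i}ncide essentiellement avec celle du papier~: la preuve y consiste pr\'{e}cis\'{e}ment \`{a} prendre les d\'{e}rivations param\'{e}tr\'{e}es par le Th\'{e}or\`{e}me~\ref{DerNonAss}, \`{a} d\'{e}velopper $d\circ d'(e_i)=\sum_j d'_{ji}\,d(e_j)$ pour chacune des dix alg\`{e}bres, \`{a} former la diff\'{e}rence pour ne garder que les combinaisons antisym\'{e}triques, puis \`{a} compter les param\`{e}tres ind\'{e}pendants pour obtenir $\D(N)'$. Il ne reste qu'\`{a} ex\'{e}cuter effectivement ces calculs m\'{e}caniques (y compris le suivi des termes crois\'{e}s en $\alpha^{-1}\beta$ que vous signalez \`{a} juste titre), ce que fait le papier cas par cas.
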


\begin{proof} Soient $d, d' \in \D(N)$. Calculons $[d,d']$.

$\bullet$ $N_{4, 6}$: On a $d\circ d' (e_{1}) = d_{11}'d(e_{1}) + d_{41}'d(e_{4})$, alors $[d, d']_{11} = d_{11}'d_{11} - d_{11}d_{11}' = 0$, $[d, d']_{41} = (d_{11}'d_{41} + 2d_{41}'d_{22}) - (d_{11}d_{41}' + 2d_{41}d_{22}') = (d_{11}'d_{41} - d_{11}d_{41}') + 2(d_{41}'d_{22} - d_{41}d_{22}')$. On a $d\circ d' (e_{2}) = d_{22}'d(e_{2}) + (2d_{11}' - d_{22}')d(e_{3}) + d_{42}'d(e_{4})$, alors $[d, d']_{22} = (d_{22}'d_{22} + (2d_{11}' - d_{22}')(2d_{11} - d_{22})) - (d_{22}d_{22}' + (2d_{11} - d_{22})(2d_{11}' - d_{22}')) = 0$, $[d, d']_{42} = (d_{22}'d_{42} - (2d_{11}' - d_{22}')d_{42} + 2d_{42}'d_{22}) - (d_{22}d_{42}' - (2d_{11} - d_{22})d_{42}' + 2d_{42}d_{22}') = 2(d_{11}d_{42}' - d_{11}'d_{42})$. On en d\'{e}duit que $\D(N_{4, 6})'\simeq K^{2}$.

$\bullet$ $N_{5, 10}(\alpha)$: On a $d\circ d' (e_{1}) = d_{11}'d(e_{1}) + d_{51}'d(e_{5})$, alors $[d, d']_{11} = d_{11}'d_{11} - d_{11}d_{11}' = 0$, $[d, d']_{51} = (d_{11}'d_{51} + 2d_{51}'d_{22}) - (d_{11}d_{51}' + 2d_{51}d_{22}') = (d_{11}'d_{51} - d_{11}d_{51}') +  2(d_{51}'d_{22} - d_{51}d_{22}')$.  On a $d\circ d' (e_{2}) = d_{22}'d(e_{2}) + (2d_{11}' - d_{22}')d(e_{3}) + d_{42}'d(e_{4}) + d_{52}'d(e_{5})$, alors $[d, d']_{22} = (d_{22}'d_{22} + (2d_{11}' - d_{22}')(2d_{11} - d_{22}) -  \alpha d_{42}'d_{42}) - (d_{22}d_{22}' + (2d_{11} - d_{22})(2d_{11}' - d_{22}') -  \alpha d_{42}d_{42}') = 0$, $[d, d']_{42} = (d_{22}'d_{42} - (2d_{11}' - d_{22}')d_{42} + d_{42}'d_{22}) - (d_{22}d_{42}' - (2d_{11} - d_{22})d_{42}' + d_{42}d_{22}') = (d_{22}'d_{42} - d_{22}d_{42}') + 2(d_{42}'d_{11} - d_{42}d_{11}')$, $[d, d']_{52} = (d_{22}'d_{52} - (2d_{11}' - d_{22}')d_{52} + d_{42}'d_{54} + 2d_{52}'d_{22}) - (d_{22}d_{52}' - (2d_{11} - d_{22})d_{52}' + d_{42}d_{54}' + 2d_{52}d_{22}') = (d_{42}'d_{54} - d_{42}d_{54}') + 2(d_{52}'d_{11} - d_{52}d_{11}')$. On a $d\circ d' (e_{4}) = -\alpha d_{42}'d(e_{2}) - \alpha d_{42}'d(e_{3}) + d_{22}'d(e_{4}) + d_{54}'d(e_{5})$, alors $[d, d']_{54} = (-\alpha d_{42}'d_{52} + \alpha d_{42}'d_{52} + d_{22}'d_{54} + 2d_{54}'d_{22}) - (-\alpha d_{42}d_{52}' + \alpha d_{42}d_{52}' + d_{22}d_{54}' + 2d_{54}d_{22}') =  d_{54}'d_{22} - d_{54}d_{22}'$. On en d\'{e}duit que $\D(N_{5, 10}(\alpha))'\simeq K^{4}$.

$\bullet$ $N_{5, 11}(\alpha))$: On a $d\circ d' (e_{1}) = d_{11}'d(e_{1}) + d_{41}'d(e_{4}) + d_{51}'d(e_{5})$, alors $[d, d']_{11} = (d_{11}'d_{11} - \alpha d_{41}'d_{41}) - (d_{11}d_{11}' - \alpha d_{41}d_{41}') = 0$, $[d, d']_{41} = (d_{11}'d_{41} + d_{41}'d_{11}) - (d_{11}d_{41}' + d_{41}d_{11}') = 0$, $[d, d']_{51} = (d_{11}'d_{51} + d_{41}'d_{54} + 2d_{51}'d_{22}) - (d_{11}d_{51}' + d_{41}d_{54}' + 2d_{51}d_{22}') = (d_{11}'d_{51} - d_{11}d_{51}') + (d_{41}'d_{54} - d_{41}d_{54}') + 2(d_{51}'d_{22} - d_{51}d_{22}')$.  On a $d\circ d' (e_{2}) = d_{22}'d(e_{2}) + (2d_{11}' - d_{22}')d(e_{3}) + d_{52}'d(e_{5})$, alors $[d, d']_{22} = (d_{22}'d_{22} + (2d_{11}' - d_{22}')(2d_{11} - d_{22})) - (d_{22}d_{22}' + (2d_{11} - d_{22})(2d_{11}' - d_{22}')) = 0$, $[d, d']_{52} = (d_{22}'d_{52} - (2d_{11}' - d_{22}')d_{52} + 2d_{52}'d_{22}) - (d_{22}d_{52}' - (2d_{11} - d_{22})d_{52}' + 2d_{52}d_{22}') = 2(d_{52}'d_{11} - d_{52}'d_{11})$. On a $d\circ d' (e_{4}) = -\alpha d_{41}'d(e_{1}) + d_{11}'d(e_{4}) + d_{54}'d(e_{5})$, alors $[d, d']_{54} = (-\alpha d_{41}'d_{51} + d_{11}'d_{54} + 2d_{54}'d_{22}) - (-\alpha d_{41}d_{51}' + d_{11}d_{54}' + 2d_{54}d_{22}') = -\alpha(d_{41}'d_{51} - d_{41}d_{51}') + (d_{11}'d_{54} - d_{11}d_{54}') + 2(d_{54}'d_{22}) - d_{54}d_{22}')$. On en d\'{e}duit que $\D(N_{5, 11}(\alpha))'\simeq K^{3}$.

$\bullet$ $N_{5, 12}(\alpha, \beta)$: On a $d\circ d' (e_{1}) = d_{11}'d(e_{1}) + d_{51}'d(e_{5})$, alors $[d, d']_{11} = d_{11}'d_{11} - d_{11}d_{11}' = 0$, $[d, d']_{51} = (d_{11}'d_{51} + 2d_{51}'d_{11}) - (d_{11}d_{51}' + 2d_{51}d_{11}') = d_{51}'d_{11} - d_{51}d_{11}'$. On a $d\circ d' (e_{2}) = d_{11}'d(e_{2}) + d_{11}'d(e_{3}) + d_{52}'d(e_{5})$, alors $[d, d']_{52} = (d_{11}'d_{52} - d_{11}'d_{52} + 2d_{52}'d_{11}) - (d_{11}d_{52}' - d_{11}d_{52}' + 2d_{52}d_{11}') = 2(d_{52}'d_{11} - d_{52}d_{11}')$. On a $d\circ d' (e_{4}) = d_{11}'d(e_{4}) + d_{54}'d(e_{5})$, alors $[d, d']_{54} = (d_{11}'d_{54} + 2d_{54}'d_{11}) - (d_{11}d_{54}' + 2d_{54}d_{11}') = d_{54}'d_{11}) - d_{54}d_{11}'$.
On en d\'{e}duit que $\D(N_{5, 12}(\alpha, \beta))'\simeq K^{3}$.

$\bullet$ $N_{6, 19}(\alpha, \beta))$:
On a $d\circ d' (e_{1}) = d_{11}'d(e_{1}) + d_{61}'d(e_{6})$, alors $[d, d']_{11} = d_{11}'d_{11} - d_{11}d_{11}' =  0$, $[d, d']_{61} = (d_{11}'d_{61} + 2d_{61}'d_{22}) - (d_{11}d_{61}' + 2d_{61}d_{22}') = (d_{11}'d_{61} - d_{11}d_{61}') + 2(d_{61}'d_{22} - d_{61}d_{22}')$.
On a $d\circ d' (e_{2}) = d_{22}'d(e_{2}) + (2d_{11}' - d_{22}')d(e_{3}) + d_{42}'d(e_{4}) + d_{52}'d(e_{5}) + d_{62}'d(e_{6})$, alors $[d, d']_{22} = (d_{22}'d_{22} + (2d_{11}' - d_{22}')(2d_{11} - d_{22}) - \alpha d_{42}'d_{42} - \beta d_{52}'d_{52}) - (d_{22}d_{22}' + (2d_{11} - d_{22})(2d_{11}' - d_{22}') - \alpha d_{42}d_{42}' - \beta d_{52}d_{52}') = 0$, $[d, d']_{42} = (d_{22}'d_{42} - (2d_{11}' - d_{22}')d_{42} + d_{42}'d_{22} - \alpha^{-1}\beta d_{52}'d_{54}) - (d_{22}d_{42}' - (2d_{11} - d_{22})d_{42}' + d_{42}d_{22}' - \alpha^{-1}\beta d_{52}d_{54}') = (d_{22}'d_{42} - d_{22}d_{42}') + 2(d_{11}d_{42}' - d_{11}'d_{42}) - \alpha^{-1}\beta(d_{52}'d_{54} - d_{52}d_{54}')$, $[d, d']_{52} = (d_{22}'d_{52} - (2d_{11}' - d_{22}')d_{52} + d_{42}'d_{54} + d_{52}'d_{22}) - (d_{22}d_{52}' - (2d_{11} - d_{22})d_{52}' + d_{42}d_{54}' + d_{52}d_{22}') = (d_{42}'d_{54} - d_{42}d_{54}') + (d_{22}'d_{52} - d_{22}d_{52}') + 2(d_{11}d_{52}' - d_{11}'d_{52})$, $[d, d']_{62} = (d_{22}'d_{62} - (2d_{11}' - d_{22}')d_{62} + d_{42}'d_{64} + d_{52}'d_{65} + 2d_{62}'d_{22}) - (d_{22}d_{62}' - (2d_{11} - d_{22})d_{62}' + d_{42}d_{64}' + d_{52}d_{65}' + 2d_{62}d_{22}') = (d_{42}'d_{64} - d_{42}d_{64}') + (d_{52}'d_{65} - d_{52}d_{65}') + 2(d_{62}'d_{11} - d_{62}d_{11}')$. On a $d\circ d' (e_{4}) = -\alpha d_{42}'d(e_{2}) - \alpha d_{42}'d(e_{3}) + d_{22}'d(e_{4}) + d_{54}'d(e_{5}) + d_{64}'d(e_{6})$, alors $[d, d']_{54} = (-\alpha d_{42}'d_{52} + \alpha d_{42}'d_{52} + d_{22}'d_{54} + d_{54}'d_{22}) - (-\alpha d_{42}d_{52}' + \alpha d_{42}d_{52}' + d_{22}d_{54}' + d_{54}d_{22}') = 0$, $[d, d']_{64} = (-\alpha d_{42}'d_{62} + \alpha d_{42}'d_{62} + d_{22}'d_{64} + d_{54}'d_{65} + 2d_{64}'d_{22}) - (-\alpha d_{42}d_{62}' + \alpha d_{42}d_{62}' + d_{22}d_{64}' + d_{54}d_{65}' + 2d_{64}d_{22}') = (d_{64}'d_{22} - d_{64}d_{22}') + (d_{54}'d_{65} - d_{54}d_{65}')$. On a  $d\circ d' (e_{5}) = -\beta d_{52}'d(e_{2}) - \beta d_{52}'d(e_{3}) - \alpha^{-1}\beta d_{54}'d(e_{4}) + d_{22}'d(e_{5}) + d_{65}'d(e_{6})$, alors $[d, d']_{65} = (-\beta d_{52}'d_{62} + \beta d_{52}'d_{62} - \alpha^{-1}\beta d_{54}'d_{64} + d_{22}'d_{65} + 2d_{65}'d_{22}) - (-\beta d_{52}d_{62}' + \beta d_{52}d_{62}' - \alpha^{-1}\beta d_{54}d_{64}' + d_{22}d_{65}' + 2d_{65}d_{22}') = -\alpha^{-1}\beta(d_{54}'d_{64} - d_{54}d_{64}') + (d_{65}'d_{22} - d_{65}d_{22}')$. On en d\'{e}duit que $\D(N_{6, 19}(\alpha, \beta))' \simeq K^{6}$.

$\bullet$ $N_{6, 20}(\alpha, \beta)$: On a $d\circ d' (e_{1}) = d_{11}'d(e_{1}) + d_{41}'d(e_{4}) + d_{61}'d(e_{6})$, alors $[d, d']_{11} = (d_{11}'d_{11} - \alpha d_{41}'d_{41}) - (d_{11}d_{11}' - \alpha d_{41}d_{41}') = 0$, $[d, d']_{41} = (d_{11}'d_{41} + d_{41}'d_{11}) - (d_{11}d_{41}' + d_{41}d_{11}') = 0$, $[d, d']_{61} = (d_{11}'d_{61} + d_{41}'d_{64} + 2d_{61}'d_{22}) - (d_{11}d_{61}' + d_{41}d_{64}' + 2d_{61}d_{22}') = (d_{11}'d_{61} - d_{11}d_{61}') + (d_{41}'d_{64} - d_{41}d_{64}') + 2(d_{61}'d_{22} - d_{61}d_{22}')$. On a $d\circ d' (e_{2}) = d_{22}'d(e_{2}) + (2d_{11}' - d_{22}')d(e_{3}) + d_{52}'d(e_{5}) + d_{62}'d(e_{6})$, alors $[d, d']_{22} = (d_{22}'d_{22} + (2d_{11}' - d_{22}')(2d_{11} - d_{22}) - \beta d_{52}'d_{52}) - (d_{22}d_{22}' + (2d_{11} - d_{22})(2d_{11}' - d_{22}') - \beta d_{52}d_{52}') = 0$, $[d, d']_{52} = (d_{22}'d_{52} - (2d_{11}' - d_{22}')d_{52} + d_{52}'d_{22}) - (d_{22}d_{52}' - (2d_{11} - d_{22})d_{52}' + d_{52}d_{22}') = (d_{22}'d_{52} - d_{22}d_{52}') + 2(d_{11}d_{52}' - d_{11}'d_{52})$, $[d, d']_{62} = (d_{22}'d_{62} - (2d_{11}' - d_{22}')d_{62} + d_{52}'d_{65} + 2d_{62}'d_{22}) - (d_{22}d_{62}' - (2d_{11} - d_{22})d_{62}' + d_{52}d_{65}' + 2d_{62}d_{22}') = (d_{52}'d_{65} - d_{52}d_{65}') + 2(d_{11}d_{62}' - d_{11}'d_{62})$. On a ; $d\circ d' (e_{4}) = -\alpha d_{41}'d(e_{1}) + d_{11}'d(e_{4}) + d_{64}'d(e_{6})$, alors $[d, d']_{64} = (-\alpha d_{41}'d_{61} + d_{11}'d_{64} + 2d_{64}'d_{22}) - (-\alpha d_{41}d_{61}' + d_{11}d_{64}' + 2d_{64}d_{22}') = -\alpha(d_{41}'d_{61} - d_{41}d_{61}') + (d_{11}'d_{64} - d_{11}d_{64}') + 2(d_{64}'d_{22} - d_{64}d_{22}')$. On a $d\circ d' (e_{5}) = -\beta d_{52}'d(e_{2}) - \beta d_{52}'d(e_{3}) + d_{22}'d(e_{5}) + d_{65}'d(e_{6})$, alors $[d, d']_{65} = (-\beta d_{52}'d_{62} + \beta d_{52}'d_{62} + d_{22}'d_{65} + 2d_{65}'d_{22}) - (-\beta d_{52}d_{62}' + \beta d_{52}d_{62}' + d_{22}d_{65}' + 2d_{65}d_{22}') = d_{65}'d_{22} - d_{65}d_{22}'$. On en d\'{e}duit que $\D(N_{6, 20}(\alpha, \beta))'\simeq K^{5}$.

$\bullet$ $N_{6, 21}(\alpha, \beta, \gamma)$: On a $d\circ d' (e_{1}) = d_{11}'d(e_{1}) + d_{61}'d(e_{6})$, alors $[d, d']_{11} = (d_{11}'d_{11} - d_{11}'d_{11}) = 0$, $[d, d']_{61} = (d_{11}'d_{61} + 2d_{61}'d_{11}) - (d_{11}d_{61}' +  2d_{61}d_{11}') = d_{61}'d_{11} - d_{61}d_{11}'$. On a $d\circ d' (e_{2}) = d_{11}'d(e_{2}) + d_{11}'d(e_{3}) + d_{52}'d(e_{5}) + d_{62}'d(e_{6})$, alors $[d, d']_{52} = (d_{11}'d_{52} - d_{11}'d_{52} + d_{52}'d_{11}) - (d_{11}d_{52}' - d_{11}d_{52}' + d_{52}d_{11}') = d_{52}'d_{11} - d_{52}d_{11}'$, $[d, d']_{62} = (d_{11}'d_{62} - d_{11}'d_{62} + d_{52}'d_{65} + 2d_{62}'d_{11}) - (d_{11}d_{62}' - d_{11}d_{62}' + d_{52}d_{65}' + 2d_{62}d_{11}') = (d_{52}'d_{65} - d_{52}d_{65}')  + 2(d_{62}'d_{11} - d_{62}'d_{11})$. On a $d\circ d' (e_{4}) = d_{11}'d(e_{4}) + d_{64}'d(e_{6})$, alors $[d, d']_{64} = (d_{11}'d_{64} + 2d_{64}'d_{11}) - (d_{11}d_{64}' + 2d_{64}d_{11}') = d_{64}'d_{11} - d_{64}'d_{11}$. On a $d\circ d' (e_{5}) = -\gamma d_{52}'d(e_{2}) - \gamma d_{52}'d(e_{3}) + d_{11}'d(e_{5}) + d_{65}'d(e_{6})$, alors $[d, d']_{65} = (-\gamma d_{52}'d_{62} + \gamma d_{52}'d_{62} + d_{11}'d_{65} + 2d_{65}'d_{11}) - (-\gamma d_{52}d_{62}' + \gamma d_{52}d_{62}' + d_{11}d_{65}' + 2d_{65}d_{11}') = d_{65}'d_{11} - d_{65}d_{11}'$. On en d\'{e}duit que $\D(N_{6, 21}(\alpha, \beta, \gamma))'\simeq K^{5}$.

$\bullet$ $N_{6, 22}(\alpha, \beta)$: On a $d\circ d' (e_{1}) = d_{11}'d(e_{1}) + d_{41}'d(e_{4}) + d_{51}'d(e_{5}) + d_{61}'d(e_{6})$, alors $[d, d']_{11} = (d_{11}'d_{11} - \alpha d_{41}'d_{41} - \beta d_{51}'d_{51}) - (d_{11}d_{11}' - \alpha d_{41}d_{41}' - \beta d_{51}d_{51}') = 0$, $[d, d']_{41} = (d_{11}'d_{41} + d_{41}'d_{11} - \alpha^{-1}\beta d_{51}'d_{54}) - (d_{11}d_{41}' + d_{41}d_{11}' - \alpha^{-1}\beta d_{51}d_{54}') = \alpha^{-1}\beta(d_{51}d_{54}' - d_{51}'d_{54})$, $[d, d']_{51} = (d_{11}'d_{51} + d_{41}'d_{54} + d_{51}'d_{11}) - (d_{11}d_{51}' + d_{41}d_{54}' + d_{51}d_{11}') = d_{41}'d_{54} - d_{41}d_{54}'$, $[d, d']_{61} = (d_{11}'d_{61} + d_{41}'d_{64} + d_{51}'d_{65} + 2d_{61}'d_{22}) - (d_{11}d_{61}' + d_{41}d_{64}' + d_{51}d_{65}' + 2d_{61}d_{22}') =  (d_{11}'d_{61} - d_{11}d_{61}') + (d_{41}'d_{64} - d_{41}d_{64}') + (d_{51}'d_{65} - d_{51}d_{65}') + 2(d_{61}'d_{22} - d_{61}d_{22}')$. On a $d\circ d' (e_{2}) = d_{22}'d(e_{2}) + (2d_{11}' - d_{22}')d(e_{3}) + d_{62}'d(e_{6})$, alors $[d, d']_{22} = (d_{22}'d_{22} + (2d_{11}' - d_{22}')(2d_{11} - d_{22})) - (d_{22}d_{22}' + (2d_{11} - d_{22})(2d_{11}' - d_{22}')) = 0$, $[d, d']_{62} = (d_{22}'d_{62} - (2d_{11}' - d_{22}')d_{62} + 2d_{62}'d_{22}) - (d_{22}d_{62}' - (2d_{11} - d_{22})d_{62}' + 2d_{62}d_{22}') = 2(d_{11}d_{62}' - d_{11}'d_{62})$. On a $d\circ d' (e_{4}) = -\alpha d_{41}'d(e_{1}) + d_{11}'d(e_{4}) + d_{54}'d(e_{5}) + d_{64}'d(e_{6})$, alors $[d, d']_{54} = (-\alpha d_{41}'d_{51} + d_{11}'d_{54} + d_{54}'d_{11}) - (-\alpha d_{41}d_{51}' + d_{11}d_{54}' + d_{54}d_{11}') = \alpha(d_{41}d_{51}' - d_{41}'d_{51})$, $[d, d']_{64} = (-\alpha d_{41}'d_{61} + d_{11}'d_{64} + d_{54}'d_{65} + 2d_{64}'d_{11}) - (-\alpha d_{41}d_{61}' + d_{11}d_{64}' + d_{54}d_{65}' + 2d_{64}d_{11}') = (d_{64}'d_{11} - d_{64}d_{11}') + (d_{54}'d_{65} - d_{54}d_{65}') + \alpha(d_{41}d_{61}' - d_{41}'d_{61})$. On a $d\circ d' (e_{5}) = -\beta d_{51}'d(e_{1}) - \alpha^{-1}\beta d_{54}'d(e_{4}) + d_{11}'d(e_{5}) + d_{65}'d(e_{6})$, alors $[d, d']_{65} = (-\beta d_{51}'d_{61} - \alpha^{-1}\beta d_{54}'d_{64} + d_{11}'d_{65} + 2d_{65}'d_{22}) - (-\beta d_{51}d_{61}' - \alpha^{-1}\beta d_{54}d_{64}' + d_{11}d_{65}' + 2d_{65}d_{22}') = -\beta(d_{51}'d_{61} - d_{51}d_{61}') - \alpha^{-1}\beta(d_{54}'d_{64} - d_{54}d_{64}') + (d_{11}'d_{65} - d_{11}d_{65}') + 2(d_{65}'d_{22} - d_{65}d_{22}')$. On en d\'{e}duit que $\D(N_{6, 22}(\alpha, \beta))' \simeq K^{7}$.

$\bullet$ $N_{6, 23}(\alpha, \beta)$: On a $d\circ d' (e_{1}) = d_{11}'d(e_{1}) + d_{61}'d(e_{6})$, alors $[d, d']_{11} = d_{11}'d_{11} - d_{11}d_{11}' = 0$, $[d, d']_{61} = (d_{11}'d_{61} + 2d_{61}'d_{11}) - (d_{11}d_{61}' + 2d_{61}d_{11}') = d_{61}'d_{11} - d_{11}'d_{61}$. On a $d\circ d' (e_{2}) = d_{11}'d(e_{2}) + d_{11}'d(e_{3}) + d_{62}'d(e_{6})$, alors $[d, d']_{62} = (d_{11}'d_{62} - d_{11}'d_{62} + 2d_{62}'d_{11}) - (d_{11}d_{62}' - d_{11}d_{62}' + 2d_{62}d_{11}') = 2(d_{62}'d_{11} - d_{62}d_{11}')$. On a $d\circ d' (e_{4}) = d_{11}'d(e_{4}) + d_{64}'d(e_{6})$, alors $[d, d']_{64} = (d_{11}'d_{64} + 2d_{64}'d_{11}) - (d_{11}d_{64}' + 2d_{64}d_{11}') = d_{64}'d_{11} - d_{64}d_{11}'$.
On a  $d\circ d' (e_{5}) = d_{11}'d(e_{5}) + d_{65}'d(e_{6})$, alors $[d, d']_{65} = (d_{11}'d_{65} + 2d_{65}'d_{11}) - (d_{11}d_{65}' + 2d_{65}d_{11}') = d_{65}'d_{11} - d_{65}d_{11}'$. On en d\'{e}duit que $\D(N_{6, 23}(\alpha, \beta))'\simeq K^{4}$.

$\bullet$ $N_{6, 24}(\alpha, \beta, \gamma)$: On a $d\circ d' (e_{1}) = d_{11}'d(e_{1}) + d_{51}'d(e_{5}) + d_{61}'d(e_{6})$, alors $[d, d']_{11} = (d_{11}'d_{11} - \gamma d_{51}'d_{51}) -  (d_{11}d_{11}' - \gamma d_{51}d_{51}') = 0$, $[d, d']_{51} = (d_{11}'d_{51} + d_{51}'d_{11}) - (d_{11}d_{51}' + d_{51}d_{11}') = 0$, $[d, d']_{61} = (d_{11}'d_{61} + d_{51}'d_{65} + 2d_{61}'d_{11}) - (d_{11}d_{61}' + d_{51}d_{65}' + 2d_{61}d_{11}') = (d_{61}'d_{11} - d_{61}d_{11}') + (d_{51}'d_{65} - d_{51}d_{65}')$. On a $d\circ d' (e_{2}) = d_{11}'d(e_{2}) + d_{11}'d(e_{3}) + d_{62}'d(e_{6})$, alors $[d, d']_{62} = (d_{11}'d_{62} - d_{11}'d_{62} + 2d_{62}'d_{11}) - (d_{11}d_{62}' - d_{11}d_{62}' + 2d_{62}d_{11}') = 2(d_{62}'d_{11} - d_{62}d_{11}')$. On a $d\circ d' (e_{4}) = d_{11}'d(e_{4}) + d_{64}'d(e_{6})$, alors $[d, d']_{64} = (d_{11}'d_{64} + 2d_{64}'d_{11}) - (d_{11}d_{64}' + 2d_{64}d_{11}') = d_{64}'d_{11} - d_{64}d_{11}'$. On a $d\circ d' (e_{5}) = -\gamma d_{51}'d(e_{1}) + d_{11}'d(e_{5}) + d_{65}'d(e_{6})$, alors $[d, d']_{65} = (-\gamma d_{51}'d_{61} + d_{11}'d_{65} + 2d_{65}'d_{11}) - (-\gamma d_{51}d_{61}' + d_{11}d_{65}' + 2d_{65}d_{11}') = \gamma(d_{51}d_{61}' - d_{51}'d_{61}) + (d_{65}'d_{11} - d_{65}d_{11}')$. On en d\'{e}duit que $\D(N_{6, 24}(\alpha, \beta, \gamma))'\simeq K^{4}$.
\end{proof}

%Ici, les d\'erivations int\'erieures sont de la forme $R_c+[R_a,R_b]$.

\begin{prop} Soit $N$ une nil-alg\`{e}bre d'\'{e}volution ind\'{e}composable \`{a} puissances associatives, de dimension $\leq 6$, qui n'est pas associative, telle que $\dim(ann(N)) = 1$. La Table~$9$, caract\'{e}rise les d\'{e}rivations int\'{e}rieures de $N$.

\begin{longtable}[c]{|p{2.5cm}|p{9cm}|c|}
 \caption*{\underline{\textbf{Table 9}} }\\
  \hline
   % after \\: \hline or \cline{col1-col2} \cline{col3-col4} ...
    {$N$} &   {\small $R_{c}+[R_a,R_b] \in \D(N)$ si et seulement si} &   {\small $\In(N) \simeq$}   \\\hline
\endfirsthead
 %\caption*[]{}
 \hline   {$N$} &   {\small $R_{c}+[R_a,R_b] \in \D(N)$ si et seulement si} &   {\small $\In(N) \simeq$}   \\\hline
\endhead \hline
{$N_{4,6}$} & {$R_c = c_{2}(e_{42}-e_{43})$}, $[R_a,R_b]=(a_2b_1-a_1b_2+a_1b_3-a_3b_1)e_{41}$ & {$K^2$}   \\\hline
{$N_{5,10}(\alpha)$} & {$R_c=c_2(e_{52}-e_{53})+\al c_4e_{54}$}, $[R_a,R_b]=(b_1(a_2-a_3)-a_1(b_2-b_3))e_{51}$ & $K^3$ \\\hline
{$N_{5,11}(\alpha)$} & {$R_c=c_2(e_{52}- e_{53})$}, $[R_a,R_b]=(b_1(a_2-a_3)-a_1(b_2-b_3))e_{51}+\al(b_4(a_2-a_3)-a_4(b_2-b_3))e_{54}$  & $K^{3}$ \\\hline
{$N_{5,12}(\alpha, \beta)$} & {$R_c=c_2(e_{52}-e_{53})$}, $[R_a,R_b]=(b_1(a_2-a_3)-a_1(b_2-b_3))e_{51}+\al(b_4(a_2-a_3)-a_4(b_2-b_3))e_{54}$ & $K^{3}$ \\\hline
{$N_{6,19}(\alpha, \beta)$} & {$R_c=c_2(e_{62}-e_{63})+\al c_4e_{64}+\be c_5e_{65}$}, $[R_a,R_b]=(b_1(a_2-a_3)-a_1(b_2-b_3))e_{61}$ & $K^4$ \\\hline
{$N_{6,20}(\alpha, \beta)$} & {$R_c=c_2(e_{62}-e_{63})+\be c_5e_{65}$}, $[R_a,R_b]=(b_1(a_2-a_3)-a_1(b_2-b_3))e_{61}+ \alpha(b_{4}(a_{2} - a_{3}) - a_{4}(b_{2} - b_{3}))e_{64}$ & $K^{4}$ \\\hline
{$N_{6,21}(\alpha, \beta, \gamma)$} & {$R_c=c_2(e_{62}-e_{63})+\g c_5e_{65}$}, $[R_a,R_b]=(b_1(a_2-a_3)-a_1(b_2-b_3))e_{61}+\al (b_4(a_2-a_3)-a_4(b_2-b_3))e_{64}$& $K^4$\\\hline
{$N_{6,22}(\alpha, \beta)$} & {$R_c=c_2(e_{62}-e_{63})$}, $[R_a,R_b]=(b_1(a_2-a_3)-a_1(b_2-b_3))e_{61}+\al (b_4(a_2-a_3)-a_4(b_2-b_3))e_{64}+\be(b_5(a_2-a_3)-a_5(b_2-b_3))e_{65}$ & $K^{4}$ \\\hline
{$N_{6,23}(\alpha, \beta, \gamma, \delta)$} & {$R_c=c_2(e_{62}-e_{63})$}, $[R_a,R_b]=(b_1(a_2-a_3)-a_1(b_2-b_3))e_{61}+\al (b_4(a_2-a_3)-a_4(b_2-b_3))e_{64}+\g(b_5(a_2-a_3)-a_5(b_2-b_3))e_{65}$ & $K^{4}$ \\\hline
{$N_{6,24}(\alpha, \beta, \gamma)$} & {$R_c=c_2(e_{62}-e_{63})$}, $[R_a,R_b]=(b_1(a_2-a_3)-a_1(b_2-b_3))e_{61}+\al (b_4(a_2-a_3)-a_4(b_2-b_3))e_{64}+\g(b_5(a_2-a_3)-a_5(b_2-b_3))e_{65}$& $K^{4}$ \\\hline
\end{longtable}
\end{prop}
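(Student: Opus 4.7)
Mon point de d\'epart sera de rappeler que $N$, \'etant une alg\`ebre d'\'evolution \`a puissances associatives, est une alg\`ebre de Jordan (\cite{Ouatt2018}). Par cons\'equent $\Le(N)=R(N)+[R(N),R(N)]$, et toute d\'erivation int\'erieure de $N$ s'\'ecrit $R_c+[R_a,R_b]$ avec $a,b,c\in N$. L'id\'ee directrice est que pour une base naturelle, $R_a$ agit tr\`es simplement~: $R_a(e_i)=e_ia=a_ie_i^2$, ce qui rend les calculs de $R_c$ et de $[R_a,R_b]$ enti\`erement explicites algorithme par algorithme.

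Pour chacune des dix alg\`ebres $N_{4,6},N_{5,10}(\al),\ldots,N_{6,24}(\al,\be,\g)$, je proc\'ederai en deux temps. D'abord, je d\'eterminerai quand $R_c$ est une d\'erivation~: il suffira de comparer les vecteurs $R_c(e_i)=c_ie_i^2$ \`a la forme g\'en\'erale d'une d\'erivation donn\'ee dans la Table~$7$ du Th\'eor\`eme~\ref{DerNonAss}. La relation $e_1^2=e_2+e_3$ fournit $R_c(e_1)=c_1(e_2+e_3)$, qui doit figurer dans l'espace $Ke_1\oplus Ke_n$~; ceci forcera $c_1=0$. De m\^eme, pour les alg\`ebres o\`u $e_j^2=\al_j(e_2+e_3)+\be_je_n$ (avec $j\in\{4,5\}$), l'\'egalit\'e entre $R_c(e_j)$ et le $d(e_j)$ de la table imposera $c_j=0$, tandis que les composantes restantes, quand elles ne sont contraintes \`a rien, donneront les param\`etres libres. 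On obtiendra ainsi exactement les expressions $R_c$ annonc\'ees dans la Table~$9$.

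Deuxi\`emement, je calculerai $[R_a,R_b](e_i)=b_iR_a(e_i^2)-a_iR_b(e_i^2)$. L'observation cruciale qui simplifie drastiquement la t\^ache est la suivante~: pour tout indice $i$ tel que $e_i^2$ est nul ou colin\'eaire \`a un $e_k$ avec $e_k^2=0$ (ce qui est le cas d\`es que $e_i^2\in Ke_n$), on a $R_a(e_i^2)=0$. Seules les expressions $e_1^2=e_2+e_3$ (pr\'esente partout), puis \'eventuellement $e_4^2=\al(e_2+e_3)+\be e_n$ et $e_5^2=\g(e_2+e_3)+\de e_n$, produisent une contribution non nulle, donn\'ee par
\[
R_a(e_1^2)=a_2e_2^2+a_3e_3^2=(a_2-a_3)e_n,
\]
et des formules analogues ponder\'ees par $\al$, $\g$ pour $e_4^2$, $e_5^2$. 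Ceci produira directement les expressions de $[R_a,R_b]$ de la Table~$9$, toutes proportionnelles \`a $b_i(a_2-a_3)-a_i(b_2-b_3)$ pour $i\in\{1,4,5\}$.

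Enfin, je conclurai en additionnant $R_c$ et $[R_a,R_b]$, puis en comptant les param\`etres ind\'ependants pour obtenir la dimension de $\In(N)$. Le principal obstacle sera, dans les cas $N_{6,22}$, $N_{6,23}$ et $N_{6,24}$ o\`u $[R_a,R_b]$ a trois composantes (en $e_{n1}$, $e_{n4}$, $e_{n5}$), de v\'erifier que ces trois composantes peuvent \^etre obtenues ind\'ependamment~: cela r\'esultera du fait qu'en fixant $X:=a_2-a_3$ et $Y:=b_2-b_3$ non tous deux nuls, les coefficients $b_1X-a_1Y$, $b_4X-a_4Y$, $b_5X-a_5Y$ sont des formes lin\'eaires ind\'ependantes en $(a_1,a_4,a_5,b_1,b_4,b_5)$. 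La comparaison finale avec la dimension des $R_c$ admissibles donnera les isomorphismes $\In(N)\simeq K^2$, $K^3$ ou $K^4$ annonc\'es.
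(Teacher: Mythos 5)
Your proposal is correct and follows essentially the same route as the paper: an explicit case-by-case computation of $R_c$ and $[R_a,R_b]$ on the natural basis, using $R_a(e_1^2)=(a_2-a_3)e_n$ (and its analogues weighted by $\alpha,\gamma$ for $e_4^2,e_5^2$) together with the comparison with the derivation description of Table~7, which forces $c_1=0$, $c_3=c_2$ and $c_j=0$ whenever $e_j^2$ has a nonzero $(e_2+e_3)$-component, followed by a parameter count. The only slight imprecision is that $R_c(e_1)$ need not lie in $Ke_1\oplus Ke_n$ but merely have no $e_2$- or $e_3$-component (for $N_{5,11}$, $N_{6,20}$, $N_{6,22}$, $N_{6,24}$ the derivations allow $e_4$- or $e_5$-components in $d(e_1)$), which still yields $c_1=0$ exactly as you conclude.
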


\begin{proof} On consid\`{e}re les alg\`{e}bres $N$ suivantes : $N_{4, 6}$ ; $N_{5, 10}(\alpha)$ \`{a} $N_{5, 12}(\alpha, \beta)$ et $N_{6, 19}$ \`{a} $N_{6, 24}(\alpha, \beta, \gamma)$. Comme $e_{1}^{2} = e_{2} + e_{3}$, $e_{2}^2 = e_{n}$, $e_{3}^{2} = -e_{n}$ et $e_{n}^{2} = 0$ avec $n=\dim(N) \geq 4$, alors pour $a, b, c \in N$, on~a $R_{c}(e_{1}) + [R_{a}, R_{b}](e_{1}) = c_{1}e_{1}^{2} + (e_{1}b)a - (e_{1}a)b = c_{1}(e_{2} + e_{3}) + (b_{1}(a_{2} - a_{3}) - a_{1}(b_{2} - b_{3}))e_{n}$, $R_{c}(e_{2}) + [R_{a}, R_{b}](e_{2}) = c_{2}e_{2}^{2} + (e_{2}b)a - (e_{2}a)b = c_{2}e_{n}$, $R_{c}(e_{3}) + [R_{a}, R_{b}](e_{3}) = c_{3}e_{3}^{2} + (e_{3}b)a - (e_{3}a)b = -c_{3}e_{n}$ et $R_{c}(e_{n}) +  [R_{a}, R_{b}](e_{n}) = 0$. On a $-c_3=[R_c]_{n3}=-[R_c]_{n2}=-c_2$ et $c_3=c_2$.

$\bullet$ $N_{4, 6}$ : on a  $R_{c} + [R_{a}, R_{b}] \in \In(N_{4, 6})$ si et seulement si $R_c = c_{2}(e_{42} - e_{43})$ et $[R_{a}, R_{b}] = (b_{1}(a_{2} - a_{3}) - a_{1}(b_{2} - b_{3}))e_{41}$. Ainsi $\In(N_{4, 6}) \simeq K^{2}$.

$\bullet$ $N_{5, 10}(\alpha)$ : on a $R_{c}(e_{4}) + [R_{a}, R_{b}](e_{4}) = \alpha c_{4}e_{5}$ ; donc $R_{c} + [R_{a}, R_{b}] \in \In(N_{5, 10}(\alpha))$ si et seulement si $R_c = c_2(e_{52} - e_{53}) + \alpha c_4e_{54}$ et $[R_{a}, R_{b}] = (b_{1}(a_{2} - a_{3}) - a_{1}(b_{2} - b_{3}))e_{51}$. Ainsi $\In(N_{5, 10}(\alpha)) \simeq K^{3}$.

$\bullet$ $N_{5, 11}(\alpha)$ : on a $R_{c}(e_{4}) + [R_{a}, R_{b}](e_{4}) = \alpha c_{4}(e_{2} + e_{3}) + \alpha(b_{4}(a_{2} - a_{3}) - a_{4}(b_{2} - b_{3}))e_{5}$ ; donc $R_{c} + [R_{a}, R_{b}] \in \In(N_{5, 11}(\alpha))$ si et seulement si $R_c = c_2(e_{52} - e_{53})$ et $[R_{a}, R_{b}] = (b_{1}(a_{2} - a_{3}) - a_{1}(b_{2} - b_{3}))e_{51} + \alpha(b_{4}(a_{2} - a_{3}) - a_{4}(b_{2} - b_{3}))e_{54}$. Ainsi $\In(N_{5, 11}(\alpha)) \simeq K^{3}$.

$\bullet$ $N_{5, 12}(\alpha, \beta)$ : on a $R_{c}(e_{4}) +  [R_{a}, R_{b}](e_{4}) = c_{4}\alpha(e_{2} + e_{3}) + c_{4}\be e_{5} + \alpha(b_{4}(a_{2} - a_{3}) - a_{4}(b_{2} - b_{3}))e_{5}$ ; donc $R_{c} + [R_{a}, R_{b}] \in \In(N_{5, 12}(\alpha, \beta))$ si et seulement si $R_c = c_2(e_{52} - e_{53})$ et $[R_{a}, R_{b}] = (b_{1}(a_{2} - a_{3}) - a_{1}(b_{2} - b_{3}))e_{51} + \alpha(b_{4}(a_{2} - a_{3}) - a_{4}(b_{2} - b_{3}))e_{54}$. Ainsi $\In(N_{5, 12}(\alpha, \beta)) \simeq K^{3}$.

$\bullet$ $N_{6, 19}(\alpha, \beta)$ : on a $R_{c}(e_{4}) + [R_{a}, R_{b}](e_{4}) = \alpha c_{4}e_{6} $ ; $R_{c}(e_{5}) + [R_{a}, R_{b}](e_{5}) = \beta c_{5}e_{6}$ ; donc $R_{c} + [R_{a}, R_{b}] \in \In(N_{6, 19}(\alpha, \beta))$ si et seulement si $R_c = c_2(e_{62} - e_{63}) + \alpha c_4e_{64} + \be c_5e_{65}$ et $[R_{a}, R_{b}] = (b_{1}(a_{2} - a_{3}) - a_{1}(b_{2} - b_{3}))e_{61}$. Ainsi $\In(N_{6, 19}(\alpha, \beta)) \simeq K^{4}$.

$\bullet$ $N_{6, 20}(\alpha, \beta)$ : on a $R_{c}(e_{4}) + [R_{a}, R_{b}](e_{4}) = \alpha c_{4}(e_{2} + e_{3}) + \alpha(b_{4}(a_{2} - a_{3}) - a_{4}(b_{2} - b_{3}))e_{6}$ ; $R_{c}(e_{5}) + [R_{a}, R_{b}](e_{5}) = \be c_{5}e_{6}$ ; donc $R_{c} + [R_{a}, R_{b}] \in \In(N_{6, 20}(\alpha, \beta))$ si et seulement si  $R_c = c_2(e_{62} - e_{63}) + \be c_5e_{65}$ et $[R_{a}, R_{b}] = (b_{1}(a_{2} - a_{3}) - a_{1}(b_{2} - b_{3}))e_{61} + \alpha(b_{4}(a_{2} - a_{3}) - a_{4}(b_{2} - b_{3}))e_{64}$. Ainsi $\In(N_{6, 20}(\alpha, \beta)) \simeq K^{4}$.

$\bullet$ $N_{6, 21}(\alpha, \beta, \gamma)$ : on a $R_{c}(e_{4}) + [R_{a}, R_{b}](e_{4}) = \alpha c_{4}(e_{2} + e_{3}) + \beta c_{4} e_{6} + \alpha(b_{4}(a_{2} - a_{3}) - a_{4}(b_{2} - b_{3}))e_{6}$ ; $R_{c}(e_{5}) + [R_{a}, R_{b}](e_{5}) = \gamma c_{5}e_{6}$; donc $R_{c} + [R_{a}, R_{b}] \in \In(N_{6, 21}(\alpha, \beta, \gamma))$ si et seulement si $R_c = c_2(e_{62} - e_{63}) + \gamma c_5e_{65}$ et $[R_{a}, R_{b}] = (b_{1}(a_{2} - a_{3}) - a_{1}(b_{2} - b_{3}))e_{61} + \alpha(b_{4}(a_{2} - a_{3}) - a_{4}(b_{2} - b_{3}))e_{64}$. Ainsi $\In(N_{6, 21}(\alpha, \beta, \gamma)) \simeq K^{4}$.

$\bullet$ $N_{6, 22}(\alpha, \beta)$ : on a $R_{c}(e_{4}) + [R_{a}, R_{b}](e_{4}) = \alpha c_{4}(e_{2} + e_{3}) +  \alpha(b_{4}(a_{2} - a_{3}) - a_{4}(b_{2} - b_{3}))e_{6}$ ; $R_{c}(e_{5}) + [R_{a}, R_{b}](e_{5}) = \beta c_{5}(e_{2} + e_{3}) + \beta(b_{5}(a_{2} - a_{3}) - a_{5}(b_{2} - b_{3}))e_{6}$ ; donc $R_{c} + [R_{a}, R_{b}] \in \In(N_{6, 22}(\alpha, \beta))$ si et seulement si $R_c = c_2(e_{62} - e_{63})$ et $[R_{a}, R_{b}] = (b_{1}(a_{2} - a_{3}) - a_{1}(b_{2} - b_{3}))e_{61} + \alpha(b_{4}(a_{2} - a_{3}) - a_{4}(b_{2} - b_{3}))e_{64} + \beta(b_{5}(a_{2} - a_{3}) - a_{5}(b_{2} - b_{3}))e_{65}$. Ainsi $\In(N_{6, 22}(\alpha, \beta)) \simeq K^{4}$.

$\bullet$ $N_{6, 23}(\alpha, \beta, \gamma, \delta)$ : on a $R_{c}(e_{4}) + [R_{a}, R_{b}](e_{4}) = \alpha c_{4}(e_{2} + e_{3}) + \beta c_{4}e_{6} + \alpha(b_{4}(a_{2} - a_{3}) - a_{4}(b_{2} - b_{3}))e_{6}$ ; $R_{c}(e_{5}) + [R_{a}, R_{b}](e_{5}) = \gamma c_{5}(e_{2} + e_{3}) + \delta c_{5}e_{6} + \beta(b_{5}(a_{2} - a_{3}) - a_{5}(b_{2} - b_{3}))e_{6}$ ; donc $R_{c} + [R_{a}, R_{b}] \in \In(N_{6, 23}(\alpha, \beta, \gamma, \delta))$ si et seulement si $R_c = c_2(e_{62} - e_{63}) $ et $[R_{a}, R_{b}] = (b_{1}(a_{2} - a_{3}) - a_{1}(b_{2} - b_{3}))e_{61} + \alpha(b_{4}(a_{2} - a_{3}) - a_{4}(b_{2} - b_{3}))e_{64} + \beta(b_{5}(a_{2} - a_{3}) - a_{5}(b_{2} - b_{3}))e_{65}$. Ainsi $\In(N_{6, 23}(\alpha, \beta, \gamma, \delta)) \simeq K^{4}$.

$\bullet$ $N_{6, 24}(\alpha, \beta, \gamma)$ : on a $R_{c}(e_{4}) + [R_{a}, R_{b}](e_{4}) = \alpha c_{4}(e_{2} + e_{3}) + \beta c_{4}e_{6} + \alpha(b_{4}(a_{2} - a_{3}) - a_{4}(b_{2} - b_{3}))e_{6}$ ; $R_{c}(e_{5}) + [R_{a}, R_{b}](e_{5}) = \gamma c_{5}(e_{2} + e_{3}) + \gamma(b_{5}(a_{2} - a_{3}) - a_{5}(b_{2} - b_{3}))e_{6}$ ; donc $R_{c} + [R_{a}, R_{b}] \in \In(N_{6, 24}(\alpha, \beta, \gamma))$ si et seulement si $R_c = c_2(e_{62} - e_{63})$ et $[R_{a}, R_{b}] = (b_{1}(a_{2} - a_{3}) - a_{1}(b_{2} - b_{3}))e_{61} + \alpha(b_{4}(a_{2} - a_{3}) - a_{4}(b_{2} - b_{3}))e_{64} + \gamma(b_{5}(a_{2} - a_{3}) - a_{5}(b_{2} - b_{3}))e_{65}$. Ainsi $\In(N_{6, 24}(\alpha, \beta, \gamma)) \simeq K^{4}$.
\end{proof}

\subsection{Dimension de l'annulateur de $N$ est $2$}

\begin{theo} Soit $N$ une nil-alg\`{e}bre d'\'{e}volution ind\'{e}composable \`{a} puissances associatives de dimension $\leq 6$, qui n'est pas associative, telle que $\dim(ann(N)) = 2$. Alors, la Table~$10$ caract\'{e}rise les d\'{e}rivations dans $N$.

\begin{longtable}[c]{|p{2cm}|p{9.5cm}|c|}
 \caption*{\underline{\textbf{Table 10}} }\\
  \hline
   % after \\: \hline or \cline{col1-col2} \cline{col3-col4} ...
    {$N$} &   {\small D\'{e}finition de la d\'{e}rivation} &   {\small $\D(N)\simeq$}   \\\hline
\endfirsthead
 %\caption*[]{}
 \hline   {$N$} &   {\small D\'{e}rivation} &   {\small $\D(N)\simeq$}   \\\hline
\endhead \hline
\endfoot
{$N_{6,25}(\alpha)$} & {$d(e_{1}) = d_{11}e_{1} + d_{51}e_{5} + d_{61}e_{6}$, $d(e_{2}) = d_{22}e_{2} + (2d_{11} - d_{22})e_{3} + d_{52}e_{5} + d_{62}e_{6}$, $d(e_{3}) = (2d_{11} - d_{22})e_{2} + d_{22}e_{3} - d_{52}e_{5} - d_{62}e_{6}$, $d(e_{4}) = d_{11}e_{4} + d_{54}e_{5} + d_{64}e_{6}$, $d(e_{5}) = 2d_{22}e_{5}$, $d(e_{6}) = 2d_{11}e_{6}$} & {$K^{5}\times K^3$}   \\\hline
{$N_{6,26}$} & {$d(e_{1}) = d_{11}e_{1} + d_{51}e_{5} + d_{61}e_{6}$, $d(e_{2}) = 2d_{11}e_{2} + d_{52}e_{5} + d_{62}e_{6}$, $d(e_{3}) = 2d_{11}e_{3} + d_{53}e_{5} + d_{63}e_{6}$, $d(e_{4}) = 2d_{11}e_{4} -(d_{52} + d_{53})e_{5} - (d_{62} + d_{63})e_{6}$, $d(e_{5}) = 4d_{11}e_{5}$, $d(e_{6}) = 4d_{11}e_{6}$, } & {$K^{4}\times K^3$}   \\\hline
\end{longtable}
\end{theo}

\begin{proof}

$\bullet$ $N_{6, 25}(\alpha)$ : $e_{1}^{2} =  e_{2} + e_{3}$, $e_{2}^{2} = e_{5}$, $e_{3}^{2} = -e_{5}$, $e_{4}^{2} = \alpha(e_{2} + e_{3}) + e_{6}$, $e_{5}^{2} = e_{6}^{2} = 0$ avec $\alpha \neq 0$. $(a)$ entra\^{\i}ne $d_{14} = d_{41}$, $d_{24} = d_{42} = 0$, $d_{34} = d_{43} = 0$ et comme $e_{1}^{2} =  e_{2} + e_{3}$, $e_{2}^{2} = e_{5}$, $e_{3}^{2} = -e_{5}$, $e_{5}^{2} = 0$, alors \eqref{Eq6} donne $d(e_{1}) = d_{11}e_{1} + d_{51}e_{5} + d_{61}e_{6}$, $d(e_{2}) = d_{22}e_{2} + (2d_{11} - d_{22})e_{2} + d_{52}e_{5} + d_{62}e_{6}$, $d(e_{3}) = (2d_{11} - d_{22})e_{2} + d_{22}e_{2} - d_{52}e_{5} - d_{62}e_{6}$ et $d(e_{5}) = 2d_{22}e_{5}$. En d\'{e}rivant $e_{4}^{2} = \alpha e_{1}^{2} + e_{6}$, on obtient $2\alpha d_{11}e_{1}^{2} + d(e_{6}) = 2d_{44}e_{4}^{2} = 2\alpha d_{44}(e_{2} + e_{3}) + 2d_{44}e_{6}$. Alors $d(e_{6}) = 2\alpha(d_{44} - d_{11})(e_{2} + e_{3}) + 2d_{44}e_{6}$, donc $d_{44} = d_{11}$ car $d(e_{6}) \in ann(N)$. Ainsi  $d(e_{4}) = d_{11}e_{4} + d_{54}e_{5} + d_{64}e_{6}$ et $d(e_{6}) = 2d_{11}e_{6}$.

$\bullet$ $N_{6, 26}$ : $e_{1}^{2} =  e_{2} + e_{3} + e_{4}$, $e_{2}^{2} = e_{5}$, $e_{3}^{2} = e_{6}$, $e_{4}^{2} = -(e_{5} + e_{6})$, $e_{5}^{2} = e_{6}^{2} = 0$. $(a)$ entra\^{\i}ne $d_{12} = d_{21} = 0$, $d_{13} = d_{31} = 0$, $d_{14} = d_{41} = 0$, $d_{23} = d_{32} = 0$, $d_{24} = d_{42} = 0$ et $d_{34} = d_{43} = 0$. En d\'{e}rivant $e_{2}^{2} = e_{5}$ et $e_{3}^{2} = e_{6}$ respectivement, on obtient $d(e_{5}) = 2d_{22}e_{5}$ et $d(e_{6}) = 2d_{33}e_{6}$. En d\'{e}rivant $e_{4}^{2} = -(e_{5} + e_{6})$, on obtient $-(d(e_{5}) + d(e_{6})) = 2d_{44}e_{4}^{2} = -2d_{44}(e_{5} + e_{6})$, i.e. $2(d_{44} - d_{22})e_{5} + 2(d_{44} - d_{33})e_{6} = 0$ ;  donc $d_{22} = d_{44} = d_{33}$. En d\'{e}rivant $e_{1}^{2} = e_{2} + e_{3} + e_{4}$, on obtient $d(e_{2}) + d(e_{3}) + d(e_{4}) = 2d_{11}e_{1}^{2} = 2d_{11}(e_{2} + e_{3} + e_{4})$, i.e. $d_{22}(e_{2} + e_{3} + e_{4}) + (d_{52} + d_{53} + d_{54})e_{5} + (d_{62} + d_{63} + d_{64})e_{6}$ ; donc $d_{22} = 2d_{11}$, $d_{54} = -(d_{52} + d_{53})$ et $d_{64} = -(d_{62} + d_{63})$. Ainsi $d(e_{1}) = d_{11}e_{1} + d_{51}e_{5} + d_{61}e_{6}$, $d(e_{2}) = 2d_{11}e_{2} + d_{52}e_{5} + d_{62}e_{6}$, $d(e_{3}) = 2d_{11}e_{3} + d_{53}e_{5} + d_{63}e_{6}$, $d(e_{4}) = 2d_{11}e_{4} - (d_{52} + d_{53})e_{5} - (d_{62} + d_{63})e_{6}$, $d(e_{5}) = 4d_{11}e_{5}$, $d(e_{6}) = 4d_{11}e_{6}$.
\end{proof}

\begin{prop} Soit $N$ une nil-alg\`{e}bre d'\'{e}volution ind\'{e}composable \`{a} puissances associatives de dimension $\leq 6$, qui n'est pas associative, telle que $\dim(ann(N)) = 2$. Alors le crochet des d\'{e}rivations $d$ et $d'$ de $N$ est donn\'{e} dans la Table~$11$.

\begin{longtable}[c]{|p{2cm}|p{9cm}|c|}
 \caption*{\underline{\textbf{Table 11}
} }\\
  \hline
   % after \\: \hline or \cline{col1-col2} \cline{col3-col4} ...
    {$N$} &   {\small D\'{e}finition de la d\'{e}rivation $[d, d']$} &   {\small $\D(N)'\simeq$}   \\\hline
\endfirsthead
 %\caption*[]{}
 \hline   {$N$} &   {\small D\'{e}finition de la d\'{e}rivation $[d, d']$} &   {\small $\D(N)'\simeq$}   \\\hline
\endhead \hline
{$N_{6,25}(\alpha, \beta)$} & {$[d, d']_{11} = [d, d']_{22} = 0$, $[d, d']_{j1} = d_{j1}'d_{11} - d_{j1}d_{11}'$, $[d, d']_{j2} = 2(d_{j2}'d_{11} - d_{j2}d_{11}')$, $[d, d']_{54} = (d_{11}'d_{54} - d_{11}d_{54}') + 2(d_{54}'d_{22} - d_{54}d_{22}')$, $[d, d']_{64} = d_{64}'d_{11} - d_{64}d_{11}'$ avec $j = 5, 6$} & {$K^{3}\times K^3$}   \\\hline
{$N_{6,26}$} & {$[d, d']_{11} = 0$, $[d, d']_{j1} = 3(d_{j1}'d_{11} - d_{j1}d_{11}')$, $[d, d']_{ji} = 2(d_{ji}'d_{11} - d_{ji}d_{11}')$ avec $j = 5, 6$ et $i = 2, 3$} & {$K^{3}\times K^3$}   \\\hline
\end{longtable}
\end{prop}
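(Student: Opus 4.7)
La strat\'egie consiste \`a calculer directement $[d,d'](e_i) = d\circ d'(e_i) - d'\circ d(e_i)$ sur chaque \'el\'ement de la base naturelle, en s'appuyant sur les expressions explicites des d\'erivations \'etablies au th\'eor\`eme pr\'ec\'edent, puis \`a identifier les composantes r\'esultantes avec la forme g\'en\'erique d'une d\'erivation dans $N$. Les expressions sont particuli\`erement commodes car, dans les deux alg\`ebres, $d(e_5)$ et $d(e_6)$ sont des multiples scalaires de $e_5$ et $e_6$, ce qui \'elimine les termes crois\'es lorsqu'on les ins\`ere dans $d\circ d'(e_i)$. Comme dans les preuves analogues des propositions pr\'ec\'edentes, il suffit de d\'evelopper soigneusement $d\circ d'(e_i)$ pour $i \in \{1,2,4\}$ (resp.\ $\{1,2,3,4\}$) et de lire les coefficients.

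Pour $N_{6,25}(\alpha)$, on \'ecrit $d\circ d'(e_1) = d_{11}'d(e_1) + d_{51}'d(e_5) + d_{61}'d(e_6)$. En utilisant $d(e_5)=2d_{22}e_5$ et $d(e_6)=2d_{11}e_6$, on obtient imm\'ediatement $[d,d']_{11}=0$ puis $[d,d']_{51} = d_{51}'d_{11}-d_{51}d_{11}'$ et $[d,d']_{61} = d_{61}'d_{11}-d_{61}d_{11}'$. Pour $e_2$, on d\'eveloppe $d\circ d'(e_2) = d_{22}'d(e_2) + (2d_{11}'-d_{22}')d(e_3) + d_{52}'d(e_5) + d_{62}'d(e_6)$; la combinaison $d_{22}'d(e_2) + (2d_{11}'-d_{22}')d(e_3)$ produit un terme diagonal qui sym\'etrise $[d,d']_{22}=0$, tandis que les contributions des derniers termes, tenant compte des signes oppos\'es de $d_{52},d_{62}$ dans $d(e_3)$, m\`enent au facteur $2$ dans $[d,d']_{j2}$ pour $j=5,6$. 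Enfin, pour $e_4$, on a $d\circ d'(e_4) = d_{11}'d(e_4) + d_{54}'d(e_5) + d_{64}'d(e_6)$, ce qui fournit $[d,d']_{54}$ avec ses deux contributions et $[d,d']_{64} = d_{64}'d_{11}-d_{64}d_{11}'$.

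Pour $N_{6,26}$, les d\'erivations sont contr\^ol\'ees par le seul param\`etre diagonal $d_{11}$, avec $d(e_i)=2d_{11}e_i + \ldots$ pour $i=2,3,4$ et $d(e_j)=4d_{11}e_j$ pour $j=5,6$. Le calcul de $d\circ d'(e_1) = d_{11}'d(e_1) + d_{51}'d(e_5) + d_{61}'d(e_6)$ donne $[d,d']_{11}=0$ puis $[d,d']_{j1} = (d_{11}'d_{j1}+4d_{j1}'d_{11}) - (d_{11}d_{j1}'+4d_{j1}d_{11}') = 3(d_{j1}'d_{11}-d_{j1}d_{11}')$. De m\^eme, pour $i \in \{2,3\}$, on d\'eveloppe $d\circ d'(e_i) = 2d_{11}'d(e_i) + d_{5i}'d(e_5) + d_{6i}'d(e_6)$ et on obtient $[d,d']_{ji} = 2(d_{ji}'d_{11}-d_{ji}d_{11}')$ par le m\^eme bilan arithm\'etique. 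Le cas $i=4$ se v\'erifie par combinaison lin\'eaire gr\^ace \`a l'identit\'e $d(e_4) = -d(e_2) - d(e_3) + 2d_{11}(e_2+e_3+e_4)$ h\'erit\'ee de $e_1^2 = e_2+e_3+e_4$, et fournit des coefficients coh\'erents avec ceux annonc\'es pour $i=2,3$.

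L'obstacle principal n'est pas conceptuel mais purement arithm\'etique~: il faut v\'erifier m\'ethodiquement que les termes diagonaux $(2d_{11}'-d_{22}')(2d_{11}-d_{22})$ issus du d\'eveloppement de $d(e_2)$ et $d(e_3)$ dans $N_{6,25}(\alpha)$ se compensent, et que les facteurs num\'eriques $2,3,4$ apparaissant dans $N_{6,26}$ s'additionnent correctement. Une fois ces v\'erifications effectu\'ees, l'\'enum\'eration de $\D(N)'$ d\'ecoule du d\'ecompte des param\`etres ind\'ependants figurant dans les expressions de $[d,d']$.
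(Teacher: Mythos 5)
Votre strat\'egie --- le calcul direct de $[d,d'](e_i)$ sur la base naturelle \`a partir des d\'erivations explicites de la Table~10 --- est exactement celle de l'article, et votre traitement de $N_{6,26}$ est correct. En revanche, pour $N_{6,25}(\alpha)$, les entr\'ees que vous dites obtenir \emph{imm\'ediatement} ne r\'esultent pas du calcul que vous d\'ecrivez, pr\'ecis\'ement parce que $d_{22}$ y est un param\`etre ind\'ependant de $d_{11}$. Avec $d(e_5)=2d_{22}e_5$, le coefficient de $e_5$ dans $d\circ d'(e_1)$ vaut $d_{11}'d_{51}+2d_{22}d_{51}'$, donc $[d,d']_{51}=(d_{11}'d_{51}-d_{11}d_{51}')+2(d_{51}'d_{22}-d_{51}d_{22}')$ (m\^eme forme que $[d,d']_{54}$), et non $d_{51}'d_{11}-d_{51}d_{11}'$. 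De m\^eme, le bilan sur $e_2$ donne $[d,d']_{62}=2(d_{22}'d_{62}-d_{22}d_{62}')+4(d_{62}'d_{11}-d_{62}d_{11}')$ : la compensation des termes en $d_{22}$ que vous invoquez n'a lieu que pour $j=5$, car $d(e_5)$ est proportionnel \`a $d_{22}$ tandis que $d(e_6)$ l'est \`a $d_{11}$. Un exemple le confirme : pour $d$ d\'efini par $d_{51}=1$ (tous les autres param\`etres nuls) et $d'$ d\'efini par $d_{22}'=1$, on a $[d,d'](e_1)=-d'(e_5)=-2e_5\neq 0$, alors que la formule que vous annoncez donne $0$.

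Autrement dit, votre r\'edaction passe sous silence les deux seuls endroits o\`u la v\'erification arithm\'etique ne redonne pas la Table~11 telle quelle (la preuve de l'article \'ecrit d'ailleurs l'expression interm\'ediaire correcte pour $[d,d']_{51}$ puis la simplifie abusivement) ; pour conclure, ces deux entr\'ees doivent \^etre corrig\'ees comme ci-dessus. La conclusion dimensionnelle n'en est pas affect\'ee : les six coefficients $(5,1)$, $(6,1)$, $(5,2)$, $(6,2)$, $(5,4)$, $(6,4)$ restent ind\'ependants et $\D(N_{6,25}(\alpha))'$ est bien de dimension $6$, de m\^eme que $\D(N_{6,26})'$.
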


\begin{proof}Soient $d, d' \in \D(N)$. Calculons $[d,d']$.

$\bullet$ $N_{6, 25}(\alpha)$: On a $d\circ d' (e_{1}) = d_{11}'d(e_{1}) + d_{51}'d(e_{5}) + d_{61}'d(e_{6})$, alors $[d, d']_{11} = d_{11}'d_{11} - d_{11}d_{11}' = 0$, $[d, d']_{51} = (d_{11}'d_{51} + 2d_{51}'d_{22}) - (d_{11}d_{51}' + 2d_{51}d_{22}') = d_{51}'d_{22} - d_{51}d_{22}'$, $[d, d']_{61} = (d_{11}'d_{61} + 2d_{61}'d_{11}) - (d_{11}d_{61}' + 2d_{61}d_{11}') = d_{61}'d_{11} - d_{61}d_{11}'$. On a $d\circ d' (e_{2}) = d_{22}'d(e_{2}) + (2d_{11}' - d_{22}')d(e_{3}) + d_{52}'d(e_{5}) + d_{62}'d(e_{6})$, alors $[d, d']_{22} = (d_{22}'d_{22} + (2d_{11}' - d_{22}')(2d_{11} - d_{22}) - (d_{22}d_{22}' + (2d_{11} - d_{22})(2d_{11}' - d_{22}') = 0$, $[d, d']_{52} = (d_{22}'d_{52} - (2d_{11}' - d_{22}')d_{52} + 2d_{52}'d_{22}) - (d_{22}d_{52}' - (2d_{11} - d_{22})d_{52}' + 2d_{52}d_{22}') = 2(d_{11}d_{52}' - d_{11}'d_{52})$, $[d, d']_{62} = (d_{22}'d_{62} - (2d_{11}' - d_{22}')d_{62} + 2d_{62}'d_{11}) - (d_{22}d_{62}' - (2d_{11} - d_{22})d_{62}' + 2d_{62}d_{11}') = 2(d_{62}'d_{11} - d_{62}d_{11}')$. On a $d\circ d' (e_{4}) = d_{11}'d(e_{4}) + d_{54}'d(e_{5}) + d_{64}'d(e_{6})$, alors $[d, d']_{54} = (d_{11}'d_{54} + 2d_{54}'d_{22}) - (d_{11}d_{54}' + 2d_{54}d_{22}') = (d_{11}'d_{54} - d_{11}d_{54}') + 2(d_{54}'d_{22} - d_{54}d_{22}')$, $[d, d']_{64} = (d_{11}'d_{64} + 2d_{64}'d_{11}) - (d_{11}d_{64}' + 2d_{64}d_{11}') = d_{64}'d_{11} - d_{64}d_{11}'$.
On en d\'{e}duit que  $\D(N_{6, 25}(\alpha))'\simeq K^{6}$.

$\bullet$ $N_{6, 26}$: On a $d\circ d' (e_{1}) = d_{11}'d(e_{1}) + d_{51}'d(e_{5}) + d_{61}'d(e_{6})$, alors $[d, d']_{11} = d_{11}'d_{11} - d_{11}d_{11}' = 0$, $[d, d']_{51} = (d_{11}'d_{51} + 4d_{51}'d_{11}) - (d_{11}d_{51}' + 4d_{51}d_{11}') = 3(d_{51}'d_{11} - d_{51}d_{11}')$, $[d, d']_{61} = (d_{11}'d_{61} + 4d_{61}'d_{11}) - (d_{11}d_{61}' + 4d_{61}d_{11}') = 3(d_{61}'d_{11} - d_{61}d_{11}')$. On a $d\circ d' (e_{2}) = 2d_{11}'d(e_{2}) + d_{52}'d(e_{5}) + d_{62}'d(e_{6})$, alors $[d, d']_{52} = (2d_{11}'d_{52} + 4d_{52}'d_{11}) - (2d_{11}d_{52}' + 4d_{52}d_{11}') = 2(d_{52}'d_{11} - d_{52}d_{11}')$, $[d, d']_{62} = (2d_{11}'d_{62} + 4d_{62}'d_{11}) - (2d_{11}d_{62}' + 4d_{62}d_{11}') = 2(d_{62}'d_{11} - d_{62}d_{11}')$. On a $d\circ d' (e_{3}) = 2d_{11}'d(e_{3}) + d_{53}'d(e_{5}) + d_{63}'d(e_{6})$, alors $[d, d']_{53} = (2d_{11}'d_{53} + 4d_{53}'d_{11}) - (2d_{11}d_{53}' + 4d_{53}d_{11}') = 2(d_{53}'d_{11} - d_{53}d_{11}')$, $[d, d']_{63} = (2d_{11}'d_{63} + 4d_{63}'d_{11}) - (2d_{11}d_{63}' + 4d_{63}d_{11}') = 2(d_{63}'d_{11} - d_{63}d_{11}')$. On en d\'{e}duit que $\D(N_{6, 26})'\simeq K^{6}$.
\end{proof}

\begin{prop} Soit $N$ une nil-alg\`{e}bre d'\'{e}volution ind\'{e}composable \`{a} puissances associatives et qui n'est pas associative de dimension $\leq 6$ telle que $\dim(ann(N)) = 2$. La Table~$12$ caract\'{e}rise les d\'{e}rivations int\'{e}rieures de $N$.

\begin{longtable}[c]{|p{2.5cm}|p{9cm}|c|}
 \caption*{\underline{\textbf{Table 12}} }\\
  \hline
   % after \\: \hline or \cline{col1-col2} \cline{col3-col4} ...
    {$N$} &   {\small $R_{c}+[R_a,R_b] \in \D(N)$ si et seulement si} &   {\small $\In(N) \simeq$}   \\\hline
\endfirsthead
 %\caption*[]{}
 \hline   {$N$} &   {\small $R_{c}+[R_a,R_b] \in \D(N)$ si et seulement si} &   {\small $\In(N) \simeq$}   \\\hline
\endhead \hline
{$N_{6, 25}(\alpha)$} & {$R_c=c_2(e_{52}-e_{53})$}, $[R_a,R_b]=(b_1(a_2-a_3)-a_1(b_2-b_3))e_{51}+\al (b_4(a_2-a_3)-a_4(b_2-b_3))e_{54}$ & {$K^{3}$}   \\\hline
{$N_{6, 26}$} & {$R_c = c_2(e_{52} + e_{63} - e_{54} - e_{64})$}, $[R_a,R_b]=(b_1(a_2-a_4)-a_1(b_2-b_4))e_{51}+\al (b_1(a_3-a_4)-a_1(b_3-b_4))e_{61}$ & {$K^3$}   \\\hline
\end{longtable}
\end{prop}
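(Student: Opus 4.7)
The plan is to use the fact, recalled in the introduction and at the start of Section~5, that a power-associative evolution algebra is a Jordan algebra, so $\Le(N)=R(N)+[R(N),R(N)]$; an inner derivation is therefore an element of the form $R_c+[R_a,R_b]$ which also satisfies the Leibniz rule. For each of $N_{6,25}(\alpha)$ and $N_{6,26}$ I compute the action of $R_c+[R_a,R_b]$ on the basis $\{e_1,\ldots,e_6\}$, compare the result with the explicit derivation form provided by Theorem~\ref{DerNonAss} (Table~10), extract the resulting constraints on $a,b,c$, and count independent scalars.

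For the first piece I use $R_c(e_i)=c_ie_i^2$, which is immediate since $e_ie_j=0$ for $i\ne j$. For the second piece, $[R_a,R_b](e_i)=(e_ib)a-(e_ia)b=b_i(e_i^2\cdot a)-a_i(e_i^2\cdot b)$; because $e_i^2$ is either zero or lies in $\ann(N)$ except when $i=1$ (and $i=4$ in $N_{6,25}(\alpha)$), the commutator is supported on $\{e_1\}$ or $\{e_1,e_4\}$, and the remaining evaluation is a single multiplication by a basis square, producing scalar combinations of the $a_j,b_j$ times $e_5$ or $e_6$.

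For $N_{6,25}(\alpha)$ the comparison with Table~10 runs as follows: the equality $R_c(e_1)=c_1(e_2+e_3)$ forces $c_1=0$, since $d(e_1)$ has no $e_2,e_3$ component in the allowed form; similarly $R_c(e_4)=c_4(\alpha(e_2+e_3)+e_6)$ forces $c_4=0$ because $\alpha\ne 0$. Matching the $e_5$-coefficients of $d(e_2)$ and $d(e_3)$ yields $c_3=c_2$, so $R_c=c_2(e_{52}-e_{53})$. The commutator contributes exactly $(b_1(a_2-a_3)-a_1(b_2-b_3))e_{51}$ and $\alpha(b_4(a_2-a_3)-a_4(b_2-b_3))e_{54}$, giving two independent scalars. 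For $N_{6,26}$, the same recipe yields $c_1=0$ from $R_c(e_1)=c_1(e_2+e_3+e_4)$, and the constraint $d(e_4)=-(d_{52}+d_{53})e_5-(d_{62}+d_{63})e_6$ from Table~10 combined with $R_c(e_j)=c_je_j^2$ for $j=2,3,4$ forces $c_2=c_3=c_4$, whence $R_c=c_2(e_{52}+e_{63}-e_{54}-e_{64})$. The commutator $[R_a,R_b]$ is supported on $e_1$ only and produces the two independent scalars $b_1(a_2-a_4)-a_1(b_2-b_4)$ and $b_1(a_3-a_4)-a_1(b_3-b_4)$ in the $e_5$ and $e_6$ coordinates respectively.

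In each case the final tally is one scalar from $R_c$ plus two from $[R_a,R_b]$, whence $\In(N)\simeq K^3$. No step is genuinely hard; the only subtle point is bookkeeping: one must identify which components of $R_c$ escape $\ann(N)$, since those components have no counterpart in the strict derivation form of Theorem~\ref{DerNonAss} and therefore must be killed, producing precisely the linear constraints $c_1=c_4=0$, $c_2=c_3$ (respectively $c_1=0$, $c_2=c_3=c_4$) that carve out the three-dimensional space of inner derivations.
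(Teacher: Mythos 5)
Your proof is correct and takes essentially the same route as the paper: evaluate $R_c$ and $[R_a,R_b](e_i)=b_i(e_i^2a)-a_i(e_i^2b)$ on the natural basis, compare with the derivation form established in the theorem immediately preceding (the one giving Table~10, which is what you are actually invoking rather than Theorem~\ref{DerNonAss}), and read off the constraints $c_1=c_4=0$, $c_2=c_3$ for $N_{6,25}(\alpha)$ and $c_1=0$, $c_2=c_3=c_4$ for $N_{6,26}$. Your count $\In(N)\simeq K^3$ in both cases agrees with Table~12 and is correct, which incidentally shows that the final sentence of the paper's own proof ($\In(N_{6,26})\simeq K^2$) is a slip, as is the spurious factor $\alpha$ in the $e_{61}$ term of Table~12 for $N_{6,26}$.
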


\begin{proof}
$\bullet$ $N_{6, 25}(\alpha)$ : comme $e_{1}^{2} = e_{2} + e_{3}$, $e_{2}^{2} = e_{5}$, $e_{3}^{2} = -e_{5}$ $e_{5}^{2} = 0$ alors, pour $a, b, c \in N$, on~a $R_{c}(e_{1})=c_1(e_2+e_3)$, $R_c(e_2)=c_2e_5$, $R_c(e_3)=-c_3e_5$, $R_c(e_4)=c_4(\al(e_2+e_3)+e_6)$, $R_c(e_5)=R_c(e_6)=0$. Comme $c_2=[R_c]_{52}$, $[R_c]_{53}=-[R_c]_{52}=-c_3$ alors $c_3=c_2$ et $R_c=c_2(e_{52}-e_{53})$. Par ailleurs $[R_{a}, R_{b}](e_{1}) = (b_{1}(a_{2} - a_{3}) - a_{1}(b_{2} - b_{3}))e_{5}$, $[R_{a}, R_{b}](e_{2}) =[R_{a}, R_{b}](e_{3})=0$, $[R_{a}, R_{b}](e_{4}) = \al(b_{4}(a_{2} - a_{3}) - a_{4}(b_{2} - b_{3}))e_{5}$ et $[R_{a}, R_{b}](e_{5}) =[R_{a}, R_{b}](e_{6}) = 0$ ; donc $R_{c} + [R_{a}, R_{b}] \in \In(N_{6, 25}(\alpha))$ si et seulement si  $R_c = c_2(e_{52} - e_{53})$ et $[R_{a}, R_{b}] = (b_{1}(a_{2} - a_{3}) - a_{1}(b_{2} - b_{3}))e_{51} + \alpha(b_{4}(a_{2} - a_{3}) - a_{4}(b_{2} - b_{3}))e_{54}$. Ainsi $\In(N_{6, 25}(\alpha)) \simeq K^{3}$.

$\bullet$ $N_{6, 26}$ : on a $R_{c}(e_{1}) = c_{1}e_{1}^{2} = c_{1}(e_{2} + e_{3} + e_{4})$, $R_c(e_2)=c_2e_5$, $R_c(e_3)=c_3e_6$, $R_c(e_4)=-c_4(e_5+e_6)$, $R_c(e_5)=R_c(e_6)=0$. On a $c_1=[R_c]_{21}=0$, $c_2= [R_c]_{52}$, $c_3=[R_c]_{63}$, $-c_4=[R_c]_{54}=[R_c]_{64}$ donne $-c_4=-([R_c]_{52}+[R_c]_{53})= -([R_c]_{62}+[R_c]_{63})=-[R_c]_{6,3}$ et $-c_4=-c_3=-[R_c]_{52}=-c_2$, par suite $R_c=c_2(e_{52} + e_{63} - e_{54} - e_{64})$. Par ailleurs $[R_{a}, R_{b}](e_{1})= (e_{1}b)a - (e_{1}a)b= (b_{1}(a_{2} - a_{4}) - a_{1}(b_{2} - b_{4}))e_{5} + (b_{1}(a_{3} - a_{4}) - a_{1}(b_{3} - b_{4}))e_{6}$, $[R_{a}, R_{b}](e_{i}) = 0$ pour $i=2,3,4,5,6$.  Donc $R_{c} + [R_{a}, R_{b}]$ est une d\'{e}rivation de $N_{6, 26}$ si et seulement si $[R_c]=c_2(e_{52} + e_{63} - e_{54} - e_{64})$ et $[R_{a}, R_{b}] = (b_{1}(a_{2} - a_{4}) - a_{1}(b_{2} - b_{4}))e_{51} + (b_{1}(a_{3} - a_{4}) - a_{1}(b_{3} - b_{4}))e_{61}$. Ainsi $\In(N_{6, 26}) \simeq K^{2}$.

\end{proof}

\end{document}